\def\p@equation{}
\newcommand{\R}{\mathbb R}
\newcommand{\Z}{\mathbb Z}
\newcommand{\QED}{\hspace*{\fill}\rule{2.5mm}{2.5mm}}
\newtheorem{theorem}{Theorem}
\newtheorem{lemma}[theorem]{Lemma}
\newtheorem{corollary}[theorem]{Corollary}
\newtheorem{conjecture}[theorem]{Conjecture}
\newtheorem{prop}[theorem]{Proposition}
\newenvironment{proof}{\noindent{\bf Proof\ }}{\QED\\}
\newcommand{\N}{\mathbb N}
\newtheorem{definition}{Definition}
\begin{document}

\begin{center}
{\LARGE {\bf Symmetric polynomials, $p$-norm inequalities, and certain functionals
 related to majorization.} }
 \medskip

{\Large {Ivo Kleme\v{s}} }

{\it \noindent Department of Mathematics and Statistics,
\noindent McGill University,
\noindent 805 Sherbrooke Street West,
\noindent Montr\'eal, Qu\'ebec,
\noindent H3A 2K6,
\noindent Canada.}
\noindent  Email:  klemes@math.mcgill.ca
\end{center}

\bigskip

\noindent {\it Abstract.}
We study the relation $ x \prec_L y$ on $[0,\infty)^n$ defined by
$ x \prec_L y \ \Leftrightarrow \
 \sum_{i=1}^n \psi(x_i) \leq \sum_{i=1}^n \psi(y_i)$
for all $\psi : [0,\infty)\to [0,\infty)$ of the form
$ \psi(s) = \int_0^s \ \varphi(t) \frac{dt}{t}$
where $\varphi$ is concave nondecreasing.
(We also briefly explain how this arises in the context of some $L^p$
inequalities between complex exponential sums conjectured by
Hardy and Littlewood, and why the more familiar relation
obtained by allowing any concave nondecreasing $\psi$
(a version of weak majorization) does not hold in that context.)
We attempt to characterize $x \prec_L y$ by
means of another relation, $x \prec_F y$, defined by
$ F_{k,r}(x) \le F_{k,r}(y) , \
\forall \ k,r \geq 1$, where  $F_{k,r}(x)$ is the coefficient of $t^k$ in
$\prod_{i=1}^n \left(1 + (x_it)^1/1! + \dots + (x_it)^r/r! \right)$.
We prove that\\
$x \prec_F y \Rightarrow x \prec_L y$. Regarding
the converse, we prove a necessary and sufficient condition
in terms of $\nabla g$ for a function $g$ to have the
order-preserving property $x \prec_L y \Rightarrow g(x) \leq g(y)$
``locally", and we verify that the condition holds for all $g=F_{k,r}$.
We then propose a general conjecture that the total positivity of
certain Jacobian matrices implies the ``path connectedness" of relations
such as $x \prec_L y$. If true, this conjecture would allow us to
remove the word ``locally'' and thus
complete the proof of $x \prec_L y \Rightarrow x \prec_F y$.

\vfill \noindent Research supported by NSERC Canada.

\noindent {\it A.M.S. Mathematics Subject Classification} :
52A40 (15A42, 15A48, 26B25, 42A05,  47A30, 60E15).

\noindent {\it Keywords} : Schur-concave, symmetric polynomial, inequality,
eigenvalue, $p$-norm, majorization, Legendre transform, order-preserving,
totally positive matrix, generalized Vandermonde determinant, rising water lemma.

\noindent  \hfill Date: 9 April 2007.

\newpage

\noindent {\Large {\bf 1. Introduction.} }
\medskip

 For $x,y \in \R_{+}^n :=
[0,\infty)^n =\{ \ t \in \R^n \ | \ t_i \geq 0, \ i=1,\dots,n \ \}, $
$x$ is said to majorize $y$ ($x\succ y$) if  $\sum_{i=1}^n x_i = \sum_{i=1}^n y_i$
and $\sum_{i=1}^k \ x_i^* \ \geq \ \sum_{i=1}^k \ y_i^* $ for
$k=1, \dots , n-1,$ where $x_1^* \geq \dots \geq x_n^*$ denotes the
decreasing rearrangement of the entries $x_i$ of a vector $x.$
A real-valued function $F$ on $\R_{+}^n$ is called Schur-concave
if $x \succ y \Rightarrow F(x) \leq F(y) $, and Schur-convex
if $x \succ y \Rightarrow F(x) \geq F(y) $.
The relation (``pre-ordering") $x\succ y$
 is equivalent to the property that
$\sum_{i=1}^n x_i = \sum_{i=1}^n y_i$ and
$\sum_{i=1}^n \varphi(x_i) \geq \sum_{i=1}^n \varphi(y_i)$ for all
convex $\varphi : [0,\infty)\to \R$. Several other characterizations
are known \cite[p. 9-12]{MO}. In particular one can restrict $\varphi$ to
all nondecreasing convex $\varphi : [0,\infty) \to [0,\infty)$
with $\varphi(0) = 0,$ or
even to all $\varphi$ of the form $\varphi(s) = (s-\lambda)_+ $
for $\lambda > 0 .$ For $x,y \in \R_{+}^n$ with sums not necessarily
equal, we will denote by $x \prec^w y$ the relation defined by
$\sum_{i=k}^n \ x_i^* \ \leq \ \sum_{i=k}^n \ y_i^* $ for
$k=1, \dots , n$. (In \cite{MO} this is denoted by $y \prec^w x$.
We prefer $x \prec^w y$ because when $n=1$ this relation
reduces to the usual order $x \leq y$ on $\R_{+}$.) The relation
$x \prec^w y$ is equivalent to
\begin{equation}
\label{phi0}
\sum_{i=1}^n \varphi(x_i) \leq \sum_{i=1}^n \varphi(y_i)
\end{equation}
 for all
nondecreasing concave $\varphi : [0,\infty)\to [0,\infty)$ with
$\varphi(0) = 0,$ or just all $\varphi$ of the form $\varphi(s) = \min (s,\lambda)$
for $\lambda > 0 $.
Under the extra condition $\sum_{i=1}^n x_i = \sum_{i=1}^n y_i$ ,
it is clear that
$\ x \prec^w y$ is equivalent to $x \succ y$.
When the vectors $x,y $ are replaced by appropriate functions $f, g : [a,b]
\to [0,\infty), $
and sums by integrals, there are similar definitions and results.

Given any index set  $\Lambda$ and  a family of real-valued functions
$\{\Phi_\lambda\}_{\lambda \in \Lambda}$ defined on a subset $\mathcal{X} \subset \R^n$,
the relation defined for $x,y \in \mathcal{X}$ by the condition
$\Phi_\lambda(x) \leq \Phi_\lambda(y) \  \forall \lambda \in \Lambda$ will be called
the relation {\it induced by } $\{\Phi_\lambda\}_{\lambda \in \Lambda}$
on $\mathcal{X}$.
In this paper we study two specific examples of such relations
on $\R_{+}^n $, denoted
by $\prec_L$ and $\prec_F$ and defined below in
(\ref{L}) and (\ref{F}), which are both weaker than
$\prec^w$. In particular, $\prec_L$ is induced by
a certain subset of  the family of functions $\sum_{i=1}^n \varphi(x_i)$
 used above in (\ref{phi0}), and
$\prec_F$ is induced by
a certain subset (\ref{Fkr}) of  the family of all Schur-concave symmetric polynomials
with positive coefficients.
The relations
originate from a problem in harmonic
analysis  involving sharp $p$-norm relations of
the form $||x||_p \leq ||y||_p$
for various intervals of $p$ (the sharp Littlewood conjecture) \cite{K1}.
This problem will be summarized in \S2.2, but it
is not the main concern in this paper. Instead, we will be concerned
with understanding or characterizing the inequalities that define
$\prec_L$ and $\prec_F$ from various points of view.
A particular problem
is to determine the relation between the
two relations  $\prec_L$ and $\prec_F$. (In the application to
the sharp Littlewood conjecture, $\prec_F$ was introduced for the
specific purpose of understanding $\prec_L$ in a more computationally
accessible manner.)
We suspect that $\prec_L$ and $\prec_F$ are equivalent
and we give a proof of the implication $x \prec_F y  \Rightarrow x \prec_L y$
(Theorem 9). Regarding the converse, we give  ``local" results involving
gradients (Lemmas 12 and 13), analogous to the Schur-Ostrowski criterion concerning
$\prec^w$. We also discuss a general conjecture (Conjecture 16) which in particular
would allow one to deduce the full converse
``$x \prec_L y  \Rightarrow x \prec_F y$"
from our local results.

 Theorems 1, 7, 9 and Proposition 3 were previously
 reported in \cite{K2}.
\bigskip

\noindent {\Large {\bf 2. Results and background.} }
\medskip

{\bf 2.1.}
 Define the relation $x \prec_L y$ on $\R_{+}^n$ by
\begin{equation}
\label{L}
x \prec_L y \ \Leftrightarrow \
 \sum_{i=1}^n \psi(x_i) \leq \sum_{i=1}^n \psi(y_i)
 \end{equation}
for all $\psi : [0,\infty)\to [0,\infty)$ of the form
\begin{equation}
\label{psi}
 \psi(s) = \int_0^s \ \varphi(t) \frac{dt}{t}  \ , \ \ \ \ \ \ (s \geq 0),
 \end{equation}
for some nondecreasing concave $\varphi : [0,\infty)\to [0,\infty)$.
This definition does not change if we restrict
$\varphi$ to just those $\varphi$ of the form $\varphi(s)=\varphi_\lambda(s) :=
\min(s,\lambda), \ \lambda > 0$ (by the same argument that justifies this
restriction  in (\ref{phi0})). Integrating shows that this choice of $\varphi$
gives $\psi(s) = \psi_\lambda(s)
:= \min(s,\lambda) + \lambda \log_+(s/\lambda).$
 Thus $x \prec_L y \Leftrightarrow \Psi_\lambda(x) \leq
\Psi_\lambda(y) \ \forall \lambda > 0$, where
\begin{equation}
\label{psib}
 \Psi_\lambda(x) :=
 \sum_{i=1}^n \  \psi_\lambda(x_i)
 =  \sum_{i=1}^n  \  \min(x_i,\lambda) + \lambda \log_+(x_i/\lambda)
 \ , \ \ \ \ \ \ (\lambda > 0, \ x \in \R_{+}^n).
 \end{equation}
It can be checked that any $\psi$ of the form (\ref{psi}) is concave,
whence $x \prec^w y \Rightarrow x \prec_L y.$ (The converse
does not hold when $n\geq 2$ ; see \S7.)

If $\sum_{i=1}^n x_i = \sum_{i=1}^n y_i$ , then $x \prec_L y$ is equivalent
to $\sum_{i=1}^n \psi(x_i) \geq \sum_{i=1}^n \psi(y_i)$ for all
$\psi : [0,\infty)\to [0,\infty)$ of the form
$\psi(s) = \int_0^s \ \varphi(t) \frac{dt}{t}$ for some
 nondecreasing {\it convex } $\varphi : [0,\infty)\to [0,\infty)$
 (as can be seen using the
 identity $(t-\lambda)_+ = t - \min(t, \lambda)$).
In this case, we can also view the relation $x \prec_L y$ as an example
of ``tensor-product-assisted majorization"  (or the ``trumping relation")
\cite{JP}, \cite{N}, \cite{DK}, except that
here the ``catalyst" is a function on a measure space
instead of a vector in some $\R_{+}^d$. We explain this briefly in \S8.

We want to study $\prec_L$ because it implies the following
$l^p$ inequalities in $\R_{+}^n$:
\begin{equation}
\label{p1}
x \prec_L y \ \ \Rightarrow \ \  ||x||_p \leq ||y||_p \ ,
 \ \ \ 0 \leq p \leq 1 \ (p \in \R).
 \end{equation}
\begin{equation}
\label{p2}
 x \prec_L y \ , \  \sum_{i=1}^n x_i  = \sum_{i=1}^n y_i \  \ \Rightarrow \
 \ ||x||_p \geq ||y||_p \ , \ \ \ 1 \leq p \leq \infty \ (p \in \R).
\end{equation}
These follow immediately by the above remarks and the fact
that  $\varphi(s) = s^p$ is
concave in $s$  when
$0 < p \leq 1$, convex when $1 \leq p < \infty$, and
 is an eigenfunction
  for the operator $L[\varphi](s):= \int_0^s \ \varphi(t) \frac{dt}{t} \ $
for all  $0 < p < \infty$ (with eigenvalue $1/p >0$).
(If $p<0$ one can show that the implication (\ref{p1}) fails for some $n, x,y$, even if
$\sum x_i = \sum y_i$. This contrasts with the fact that if $x\succ y$
then $||x||_p \leq ||y||_p$ for all $p<0$ (since $x^p$ is convex).)

We now summarize a problem from \cite{K1} in order to further
explain our interest in $\prec_L$ and to motivate the relation
$\prec_F$ to be defined in \S2.3.

{\bf 2.2.} For any fixed $N \geq 1,$
Hardy, Littlewood, and Gabriel \cite{HL},\cite[Ch. 2]{P} proved the sharp $L^{2p}$ inequalities
\begin{equation}
\label{p}
||D_N||_{2p} \geq ||f||_{2p} \ , \ \ \ \ p=1,2,3, \dots ,
\end{equation}
where $f(z) = c_0 + c_1 z^{n_1} + \dots + c_{N-1} z^{n_{N-1}} $
 ( $ \ 0 < n_1 < \dots < n_{N-1} $ ) is any $N$-term trigonometric
polynomial on the circle $\{ z= e^{i\theta} \}$
whose coefficients are complex and satisfy $|c_i| = 1$,
and where $D_N$ is the special case $D_N(z) := 1+z+ \dots + z^{N-1}.$
This led to the conjectures that
\begin{equation}
\label{pa}
{\bf (a)} \  ||D_N||_{2p} \geq ||f||_{2p}\ , \ \forall p \in [1,\infty] \ (p \in \R),
\ \ \ {\rm and} \ \ {\bf (b)} \
||D_N||_{2p} \leq ||f||_{2p}\ , \ \forall p \in [0,1] \ (p \in \R).
\end{equation}
 (Note that $||D_N||_2 = ||f||_2 = \sqrt N$.) The case
$p=\frac{1}{2}$ in (\ref{pa})(b) came to be called the sharp Littlewood conjecture.
(The ``non-sharp" Littlewood conjecture is the case $p=\frac{1}{2}$
but with some absolute positive constant multiplier, i.e. $C||D_N||_{1} \leq ||f||_{1}$.
It was proved by Konyagin \cite{Ko} and independently
by McGehee et al. \cite{MPS}.)
The inequalities in (\ref{pa})((a) and (b)) would certainly follow from the majorization relation $ |D_N|^2
\succ |f|^2$ for these functions on the circle, but
unfortunately $ |D_N|^2 \succ |f|^2$ does not actually hold in all
cases. This was in effect remarked by
Domar and Pichorides \cite[Ch. 3, p. 42]{P},
who noted that counter-examples to $ |D_N|^2 \succ
|f|^2$ can be found simply by looking for an $f$ having a double root on
the unit circle. $D_N$ has only simple roots. Thus $f(z) =
(1-z)(1-z^2)= 1-z-z^2+z^3$ provides a counter-example for $N=4.$
Similar multiple-root examples also show that (\ref{pa})(b) cannot
be extended to any $p<0$, where $||f||_{2p}$ for negative $p$ would still be defined
by the formula $||f||_{2p} := (\int_0^{2\pi} |f|^{2p} d\theta/2\pi )^{1/(2p)}$.

The motivation for the present paper is the possibility
that, in light of the Domar-Pichorides remark, there may exist some other, weaker,
distribution-function relation between $ |D_N|^2$ and $ |f|^2$
which still implies (\ref{pa}),
but which is somehow more basic than just (\ref{pa}).
 It was conjectured in \cite{K1} that this relation
is $ |D_N|^2 \prec_L |f|^2$, that is, $ \int_0^{2\pi}
\psi(|D_N(e^{i\theta})|^2) d\theta/2\pi \leq  \int_0^{2\pi}
\psi(|f(e^{i\theta})|^2) d\theta/2\pi$ for all $\psi$ of the form
(\ref{psi}). This does of course imply all of the conjectured
$L^{2p}$ inequalities in (\ref{pa}) - by the function versions of
remarks (\ref{p1}) and (\ref{p2}). Let us also note that it is
natural to extend the conjecture (\ref{pa})(b), as well
as the conjecture $ |D_N|^2 \prec_L |f|^2$,
to $f$ with coefficients satisfying $|c_i| \geq 1$.
In this case one does not have $ \int |D_N|^2 = \int |f|^2$.
Because of this, we have not included the sum condition
$\sum x_i = \sum y_i$ when defining $\prec_L$.
We now discuss some of the motivation and evidence for the conjecture
 $ |D_N|^2 \prec_L |f|^2$.

One reason for $\prec_L$ stems from a weaker relation
proved in \cite[Theorem 1.2]{K1},
to the effect that
\begin{equation}
\label{log}
 \int_0^{2\pi} \log(1+ t|D_N(e^{i\theta})|^2 )d\theta/2\pi \
 \leq \  \int_0^{2\pi} \log(1+t|f(e^{i\theta})|^2 )d\theta/2\pi \
 \ \ \ \ \ \ \ \ \forall t \geq 0,
 \end{equation}
 whenever $f$ has the special form $f(z) = \sum_{i=0}^{N-1} \pm z^i$.
This relation was originally defined by means of
a certain ``extremum functional" (\ref{Iinf}). When that
functional is generalized, it
leads in a natural way to consideration of the stronger relation $\prec_L$.
These functionals will be discussed in \S5.
(The question as to whether the relation $\prec_L$ holds
between $|D_N|^2$ and $|f|^2$, even for these special
$f$'s, is still open and will not be pursued.)

A second reason concerns the above $N=4$
counter-examples to the majorization relation, of the type $f(z) = (1-z)(1-z^2)$.
A proof of the conjectured $L^{2p}$ inequalities (\ref{pa}) for
these examples had been found a number of years before
the conjecture $ |D_N|^2 \prec_L |f|^2$ was proposed.
When this proof was re-examined in view of the
conjecture, it was found that the proof
already yields the conjecture for these examples \cite[\S3]{K1}.

Thirdly, it was further conjectured in \cite {K1} that the
relation $ |D_N|^2 \prec_L |f|^2$ actually results
as a limiting case of the discrete version $ x \prec_L y$,
where $x =\overrightarrow{\lambda}(A)
:= (\lambda_1(A), \dots, \lambda_n(A)) $ and $y=\overrightarrow{\lambda}(B)
= (\lambda_1(B), \dots, \lambda_n(B))$
 are the eigenvalues of the $n \times n$
Toeplitz matrices $A$ and $B$ generated by $ |D_N|^2$ and $ |f|^2$
respectively. As $n\to \infty$, the relation  $ |D_N|^2 \prec_L |f|^2$
 would follow by a limit theorem
of Szeg\"o. Then in
\cite{K2} it was noted that
the discrete relation $x \prec_L y$ arises in a way that seems
very natural for the matrix context, in the sense
that it is equivalent to the following (see Theorem 7 (b)):
\begin{equation}
\label{sup}
 \sup_{||z||_1 \leq 1} \ \prod_{i=1}^n (z_i+t x_i) \ \leq \
\sup_{||z||_1 \leq 1} \ \prod_{i=1}^n (z_i+ty_i) \ , \ \ \  \forall t > 0. \
\end{equation}
If $x =\overrightarrow{\lambda}(A)$ and $y =\overrightarrow{\lambda}(B)$
for some $n \times n$ Hermitian $A, B \geq 0$, the relation $x \prec_L y$
can be further restated as just
$$
\sup_{tr(Z) \leq 1} \det (Z+tA) \ \leq \ \sup_{tr(Z) \leq 1} \det (Z+tB)
\ , \ \ \  \  \forall t > 0,
$$
where $Z$ runs over all $n \times n$ Hermitian $Z \geq 0$ (see (\ref{supdet})).
Moreover, when $n=3$ and
$\sum x_i = \sum y_i$, the relation
$ x \prec_L y$ is equivalent to the two simple conditions
$\prod x_i \leq \prod y_i$ and
$\max(x_i) \geq \max(y_i)$ (as mentioned in
\cite[\S3.3]{K1}; see Corollary 18 below).
Thus, one could argue that this ``simplicity" of $\prec_L$,
which seems to be emerging in the discrete case,
is itself a good reason to consider the conjecture
$ |D_N|^2 \prec_L |f|^2$ and the discrete approach to it.

On the other hand, the simplicity of (\ref{sup})
is perhaps only an appearance of simplicity. To make
 (\ref{sup}) more ``computable", we consider instead of (\ref{sup})
 a family of symmetric polynomials $\{ F_{k,r}(x) \}$ that extends
the family of elementary symmetric polynomials. This idea
is an extension of the strategy in \cite{K1} of using elementary symmetric
polynomials to prove the discrete version of (\ref{log}).
We now introduce the family $\{ F_{k,r}(x) \}$ and the new relation $x \prec_F y$
 based on it.

{\bf 2.3.}
Consider the relation $\prec_E$ on $ \R_{+}^n$ defined by
$x \prec_E y  \ \Leftrightarrow \ E_k(x) \le E_k(y) , \ k=1,\dots,n,$
where
$$E_k(x) = \sum_{1\leq i_1 < \dots < i_k \leq n} x_{i_1}\dots x_{i_k}  $$
 is the elementary symmetric polynomial of degree $k$, i.e. the coefficient
 of $t^k$ in the generating function
 $f_1(x,t):= \prod_{i=1}^n \left(1 + x_it \right)$.
The relation $x \prec_E y$ clearly implies that
$\sum_i \log(1+x_it) \leq \sum_i \log(1+y_it), \ \forall t\geq 0.$
This implies some of the $l^p$ inequalities seen in (\ref{p1})
and (\ref{p2}), but not all of them. It implies
$ ||x||_p \ \leq  ||y||_p  \ , \ 0\leq p \leq 1, $
and if  $\sum x_i = \sum y_i, $ then also
$ ||x||_p \ \geq  ||y||_p  \ , \ 1\leq p \leq 2. $
 (To prove this, one may put $r=1$ in the proof of Theorem 1 below.)
 The restriction $p \leq 2$ in the latter implication is sharp when $n \geq 3$, as is the
 restriction $p \geq 0$ in the former \cite[\S2]{K1}.
These remarks were used in \cite{K1} in connection with the
result (\ref{log}), and they have been used earlier in other applications
 as well \cite[p. 211-212, Lemma 11.1, Ch. 4]{GK}, \cite[Theorem 4]{MS}. We now define
the larger family of symmetric polynomials $\{ F_{k,r}(x) \}$ as follows.
\begin{definition}
For $k,r \geq 1,$ let $F_{k,r}$ denote the polynomial of degree
$k$ in $n$ variables given by
\begin{equation}
\label{Fkr}
F_{k,r}(x_1,\dots,x_n)\  = \ \ \sum_{\sum k_i = k, \ \max k_i \leq r}
\ \ \prod_{i=1}^n \frac{x_i^{k_i}}{k_i!}\ ,
\end{equation}
where it is understood that the $k_i$ range over the nonnegative integers.
Also, put $F_{0,r}=1.$
\end{definition}
The $F_{k,r}(x)$ are given by a generating function
$f_r(x,t)$ which generalizes
the above generating function $f_1(x,t)$
 for the $E_k = F_{k,1}$. Let $P_r$ be the $r$th degree Taylor polynomial
of $\exp$, that is $P_r(s) = 1 + s + \dots + \frac{s^r}{r!}$.
Then $F_{k,r}(x)$ is the coefficient of $t^k$ in the polynomial
\begin{equation}
\label{gen1}
f_r(x,t):=
\prod_{i=1}^n \ P_r(x_it) =
\prod_{i=1}^n \left(1 + x_it + \dots + \frac{x_i^r}{r!}t^r
\right).
\end{equation}
 We have
$F_{k,1} = E_k \ ,$  $F_{k,r} = (E_1)^k/k! \ $
for $k \leq r$, $ F_{nr,r} = (E_n)^r/(r!)^n$, and $F_{k,r}=0$  for $k > nr$.
Here $n$ is the number of variables; we may also view $x$
as an infinite sequence of the form $x= (x_1,x_2, \dots , x_n, 0,0, \dots)$.
\begin{definition}
The relation $\prec_F$ is defined on $ \R_{+}^n$ by
\begin{equation}
\label{F}
x \prec_F y  \ \Leftrightarrow \ F_{k,r}(x) \le F_{k,r}(y) , \
\forall \ k,r \geq 1.
\end{equation}
\end{definition}
\noindent
Clearly $x \prec_F y \Rightarrow x \prec_E y \ , $ since $\{E_k\} \subset \{F_{k,r}\} \ .$
The $F_{k,r}$ belong to an even larger family of symmetric polynomials
$\{H_S\}$ that were all proved to be Schur-concave on $ \R_{+}^n$
by Proschan and Sethuraman. Thus $x \succ y
\Rightarrow x \prec_F y$. (Consequently, $x \prec^w y
\Rightarrow x \prec_F y$, by general results \cite[Theorem 3.A.8]{MO}.)
We discuss $\{H_S\}$ in \S4, where
we also note that a certain other subfamily
$\{G_{k,r} \} \subset \{H_S\}$  characterizes majorization  on
$ \R_{+}^n$ ,
in the sense that $x \succ y \Leftrightarrow
G(x) \le G(y) , \ \forall \ G \in \{G_{k,r} \}$, when $\sum x_i = \sum y_i$
(Proposition 3).
The family $\{G_{k,r} \}$ and the latter proposition arise naturally
from the trivial remark that for
any integer  $r \geq 1$ and $x \in \R_{+}^n$ we have
$$ \sum_{i=1}^r \ x_i^* \ = \
\sup_{1\leq i_1 < \dots < i_r \leq n} \
(x_{i_1} + \dots + x_{i_r})
\ =  \ \lim_{k\to \infty \ (k \in \N)} \
\left( \sum_{1\leq i_1 < \dots < i_r \leq n}
\ (x_{i_1} + \dots + x_{i_r})^k \ \right)^{\frac{1}{k}}.
$$

A similar device for computing the supremum  in  (\ref{sup})
leads us to the family $\{F_{k,r} \}$ (see \S2.4).
But, in contrast to the family $ \{G_{k,r} \}$,
the relation induced by the family $\{F_{k,r} \}$ is strictly weaker: For $n \geq 3$, $x \prec_F y$
does not imply $x \succ y$ when
$\sum x_i = \sum y_i$ (by Corollary 18 applied to, for example,
$x=(15,2,2),\ y=(9,9,1)$). However, in Theorem 1 we will prove that $x \prec_F y$
is strong enough to imply all
of the $l^p$ inequalities in (\ref{p1})
and (\ref{p2}).
Hence, if $\sum x_i = \sum y_i$, then $\prec_F$ overcomes the
restriction $p \leq 2$ mentioned
above in connection with $\prec_E$. Furthermore,
Theorem 1 shows that the sharp restriction ``$p \leq 2$"
can be progressively weakened to ``$p \leq (r+1)$" for any integer $r=1,2,3, \dots$
by using progressively larger subsets of the family $\{F_{k,r} \}$.
Then in Theorem 9 we prove that
the full family $\{F_{k,r} \}$ gives even
more than all of the $p$-norm inequalities in (\ref{p1})
and (\ref{p2}):
we prove the stronger implication $x \prec_F y \Rightarrow x \prec_L y$.
 As mentioned in \S1, we suspect that the converse holds as well,
and we give a fairly compelling local result on this (Corollary 14).
Let us outline some of the details.

{\bf 2.4.}
The proof of $x \prec_F y \Rightarrow x \prec_L y$ (Theorem 9)
has three main ingredients.
The first is to
prove that $x \prec_L y$ is equivalent to the above supremum relation (\ref{sup})
(Theorem 7 (b)).
The second is to observe that
$$
\sup_{||z||_1 \leq 1} \ \prod_{i=1}^n (z_i+tx_i) \
= \ \lim_{r \to \infty \ (r\in \N)}
\left(
\int_{\sum z_i \leq 1 \ (z_i \geq 0)} \ \bigg( \prod_{i=1}^n ( z_i + tx_i ) \bigg)^r \
dz_1 \dots dz_n
\right)^{\frac{1}{r} } \ .
$$
The third is the following alternative ``generating function"
for the $F_{k,r}(x)$  (Lemma 8):
$$
\int_{\sum z_i \leq 1 \ (z_i \geq 0)} \ \bigg( \prod_{i=1}^n ( z_i + tx_i ) \bigg)^r \
dz_1 \dots dz_n
\
= \  \sum_{k=0}^{nr}C_{n,k,r} F_{k,r}(x)t^k \
$$
where the $C_{n,k,r}$ are some positive constants
($C_{n,k,r} =\frac{(r!)^n }{(nr+n-k)!}$). It was this generating function,
rather than (\ref{gen1}),
which originally led the author to consider the family $\{F_{k,r}\}$.

Regarding the conjectured converse direction, $x \prec_L y \Rightarrow x \prec_F y$,
our ``local" result (Corollary 14) is analogous to the
 use of gradients and Jacobians in the paper of
Marshall et al. \cite[Corollaries 7,8]{MWW},
except that we have not yet
succeeded in ``globalizing" our conclusions.
To motivate the statement of the local result, let us state
the Schur-Ostrowski criterion \cite[Theorem 3.A.7]{MO},
\cite[Example 1]{MWW} in the following way:
A $C^1$ symmetric function $\Phi :\R_+^n \to \R$
has the order-preserving property $x \prec^w y \Rightarrow
\Phi(x) \leq \Phi(y) $ on $\R_+^n$ if (and only if)
the matrix
\begin{equation}
\label{S-O}
\left[
\begin{array}{cc}
 1 & 1 \\
  \frac{\partial \Phi}{\partial x_i}(x)
 & \frac{\partial \Phi}{\partial x_j}(x)
\end{array}
\right]
\end{equation}
is totally positive whenever $ i \leq j$ and
$x \in \mathcal{D}_n := \{ x \in \R^n \ | \ x_1 \geq \dots \geq x_n \geq 0\}$.
(A matrix is said to be totally positive if all of its subdeterminants are
nonnegative).  The criterion is ``local" in the sense that
only derivatives are involved, yet the result is global in the
sense that $x$ and $y$ can be arbitrarily far apart.
The  criterion is equivalent to saying that at any point
 $x \in \mathcal{D}_n$,
$\nabla \Phi(x)$ belongs to the positive cone generated by
the $\nabla (x_k + x_{k+1} + \dots + x_n), \ k = 1, \dots,  n$.
(The positive cone is the closure of the set of finite
linear combinations using positive coefficients.) Recall from \S1 that
the functions $\phi_k(x):=(x_k + x_{k+1} + \dots + x_n)$
induce the relation $x \prec^w y$
(when $x,y \in \mathcal{D}_n$) via the conditions $\phi_k(x) \leq \phi_k(y)$.
We prove an analogous gradient property regarding
the functions $\Psi_\lambda(x)$ (\ref{psib}) that induce the relation
$x \prec_L y$ (Lemma 12). The property states that for a fixed
$x \in \mathcal{D}_n$ with say $x_n > 0$,
a vector $B \in \R^n$ is in the positive cone generated
by the $\{\nabla \Psi_\lambda(x)\}_{\lambda > 0}$ if and only if
the matrix
$$
\left[
\begin{array}{ccc}
1 & 1 & 1 \\
B_i & B_j & B_k \\
  \frac{1}{x_i} & \frac{1}{x_j} & \frac{1}{x_k}
\end{array}
\right]
$$
is totally positive
whenever $ i \leq j \leq k$.
We then verify that this holds when $B = \nabla F_{k,r}(x)$
(Lemma 13).
This local result would give
the full global result ``$x \prec_L y \Rightarrow x \prec_F y$" if
it was known that $\prec_L$ has a certain path-connectedness property
(Theorem 15), which we suspect it does have.
Conjectures on the latter
can be found near the end of \S6, where we speculate that such
a connectedness property may be just a general consequence
of the total positivity of certain gradient matrices, i.e. Jacobians
(Conjecture 16). The methods of \cite{Ger} may be applicable to this
conjecture. It is however also possible
that the global result $x \prec_L y \Rightarrow x \prec_F y$ follows
from the local one (Corollary 14) by some other general argument
(or that it is false!).

{\bf 2.5.} Be that as it may, the truth of the implication
 $x \prec_L y \Rightarrow x \prec_F y$
is not really of concern in the context of the conjecture
that $ \overrightarrow{\lambda}(A) \prec_L \overrightarrow{\lambda}(B)$
 discussed in \S2.2, although it
would still be interesting to settle the issue.
Instead, it is the result $x \prec_F y \Rightarrow x \prec_L y$ (Theorem 9) that
should be the more useful one, if one  first
proves that $ \overrightarrow{\lambda}(A) \prec_F \overrightarrow{\lambda}(B)$
for the relevant matrices $A,B$. The problem
$ \overrightarrow{\lambda}(A) \prec_F \overrightarrow{\lambda}(B)$
seems to be more tractable than the problem
$ \overrightarrow{\lambda}(A) \prec_L \overrightarrow{\lambda}(B)$,
in that $\prec_F$ only depends on computing
polynomials in the entries of $A$ and $B$.
(In that sense, it is also a return to the spirit of the original
results (\ref{p}), which involve polynomials in the coefficients
of Fourier series.)
For example, the author has proved that
$F_{4,2}(\overrightarrow{\lambda}(A)) \leq F_{4,2}(\overrightarrow{\lambda}(B))$
for some of the $4 \times 4$ Toeplitz matrices $A,B$ related to the
conjectures in \S2.2.
 The proof begins with the identity $F_{4,2} = (E_2^2 - 2E_4)/4$,
 and, after some Binet-Cauchy expansions,
 reduces to determinant inequalities
  similar to the Alexandrov
 inequalities used in \cite{K3} (where it was proved that $F_{n,1}(\overrightarrow{\lambda}(A))
 \leq F_{n,1}(\overrightarrow{\lambda}(B))$,
 i.e. $\det A \leq \det B$, for some cases of $A$ and $B$).
 The details will not be discussed in this paper.
 \bigskip

\noindent {\Large {\bf 3. The $l^p$ inequalities.} }
\begin{theorem}
Let $x,y \in \R_{+}^n$ and fix an integer $r \geq 1. $
If $ F_{k,r}(x) \leq  F_{k,r}(y) $
for all integers  $k$
in the interval $r \leq k \leq nr,$ then
 $$ ||x||_p \leq ||y||_p \ , \ \ \ 0 \leq p \leq 1, \ (p \in \R),$$
and, if in addition $\sum_i x_i  = \sum_i y_i \ ,$ then
$$ ||x||_p \geq ||y||_p \ , \ \ \ 1 \leq p \leq r+1, \ (p \in \R).$$
\end{theorem}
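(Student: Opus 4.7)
The plan is to pass from the polynomial inequalities to a pointwise inequality between the generating functions $f_r(x,t)$ and $f_r(y,t)$, and then to extract the $p$-norm inequalities via Mellin-type integral representations of $s^p$ against the kernel $\log P_r(st)$.

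First I would observe that the hypothesis actually yields $F_{k,r}(x) \le F_{k,r}(y)$ for every $0 \le k \le nr$, not merely for $k \ge r$. Indeed, the case $k = r$ of the hypothesis reads $E_1(x)^r \le E_1(y)^r$, hence $E_1(x) \le E_1(y)$, and then the identity $F_{k,r} = (E_1)^k/k!$ valid for $k \le r$ settles the remaining small-$k$ cases. Multiplying by $t^k \ge 0$ and summing over $k$ therefore gives $f_r(x,t) \le f_r(y,t)$, or equivalently
\begin{equation*}
\sum_{i=1}^n \log P_r(x_i t) \ \le \ \sum_{i=1}^n \log P_r(y_i t)
\end{equation*}
for every $t \ge 0$; call this inequality $(\star)$.

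Next I would use two Mellin-type identities. A direct change of variable $u = st$ shows that for $0 < p < 1$,
\begin{equation*}
s^p \ = \ C_p \int_0^\infty \log P_r(st)\, t^{-p-1}\, dt, \qquad C_p^{-1} := \int_0^\infty \log P_r(u)\, u^{-p-1}\, du,
\end{equation*}
with $C_p > 0$, where the defining integral converges at $0$ because $\log P_r(u) \sim u$ and at $\infty$ because $\log P_r(u) \sim r \log u$. For $1 < p < r+1$,
\begin{equation*}
s^p \ = \ (C'_p)^{-1} \int_0^\infty [\, st - \log P_r(st)\,]\, t^{-p-1}\, dt, \qquad C'_p := \int_0^\infty [u - \log P_r(u)]\, u^{-p-1}\, du,
\end{equation*}
with $C'_p > 0$ since $\log P_r(u) \le u$ for all $u \ge 0$. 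The crucial point for convergence of $C'_p$ at $0$ is the order-$(r+1)$ vanishing
\begin{equation*}
u - \log P_r(u) \ = \ \frac{u^{r+1}}{(r+1)!} + O(u^{r+2}) \quad (u \to 0^+),
\end{equation*}
which comes from $P_r(u)/e^u = 1 - u^{r+1}/(r+1)! + O(u^{r+2})$; at $\infty$ it is the linear growth $u - \log P_r(u) \sim u$ that forces $p > 1$.

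Summing each identity over the entries and invoking $(\star)$ now completes the argument. For $0 < p < 1$ this gives $\sum x_i^p \le \sum y_i^p$ directly. For $1 < p < r+1$ the linear term $st$ cancels between $x$ and $y$ thanks to $\sum x_i = \sum y_i$, so the sign of the remaining integrand is again controlled by $(\star)$, but now with the opposite sign in front of the Mellin integral, yielding $\sum x_i^p \ge \sum y_i^p$. The endpoint $p = 1$ reduces to $E_1(x) \le E_1(y)$; the endpoint $p = r+1$ follows by continuity in $p$; and $p = 0$ reduces to $E_n(x) \le E_n(y)$, which is a consequence of $F_{nr,r}(x) \le F_{nr,r}(y)$. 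I expect the only real subtlety to be the order-of-vanishing computation for $u - \log P_r(u)$ at the origin, which is exactly what dictates the sharp upper bound $p \le r+1$ in the convex range; everything else is routine bookkeeping with Mellin transforms.
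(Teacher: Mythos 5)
Your argument is correct and is essentially the paper's own proof: pass from $F_{k,r}(x)\le F_{k,r}(y)$ to the generating-function inequality $1\le f_r(x,t)\le f_r(y,t)$, then recover $s^p$ via Mellin integrals of $\log P_r(st)$ for $0<p<1$ and of $st-\log P_r(st)$ for $1<p<r+1$, using the cancellation from $\sum x_i=\sum y_i$ in the second range. The convergence analysis (linear vanishing and $O(\log)$ growth of $\log P_r$, and the order-$(r+1)$ vanishing of $u-\log P_r(u)$) matches the paper's, with your version slightly more explicit about the leading coefficient $u^{r+1}/(r+1)!$.
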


\begin{proof} Fix the integer $r\geq1.$
Observe that $\log(1 + s + \dots + \frac{s^r}{r!})$ is $O(s)$ when
$s\to 0^{+}$ and $O(\log s)$ when $s \to +\infty.$
Thus, the  integrals
$$I_r(p)  := \int_0^\infty \log(1 + s + \dots + \frac{s^r}{r!})
\ s^{-p} \  \frac{ds}{s}$$
are finite (and positive) for all real $p$ in the interval $0<p<1.$ Replacing $s$
by $at$ for any positive $a$ gives the identity
\begin{equation}
\label{id1}
\frac{1}{I_r(p)}\int_0^\infty \log(1 + at + \dots + \frac{(at)^r}{r!})
\ t^{-p} \  \frac{dt}{t} \ = \ a^p \ \ \  \ (a \geq 0,\ 0<p<1).
\end{equation}
 If now
$x,y \in \R_{+}^n$ and
 $ F_{k,r}(x) \leq  F_{k,r}(y) $
for all integers  $k$
in the interval $r \leq k \leq nr,$ then  $ F_{k,r}(x) \leq  F_{k,r}(y) $
also holds for all $k$ in the interval $0\leq k <r$ (since
 $F_{k,r} = (E_1)^k/k! \ $ for $k \leq r$), hence by (\ref{gen1}),
$$1\leq f_r(x,t) \leq f_r(y,t) \ , \forall \ t \geq 0 .$$
Taking logarithms of the $f_r$ and integrating with respect to $t^{-p} \  \frac{dt}{t}
\frac{1}{I_r(p)}$ gives, by identity (\ref{id1}),
$$ \sum_{i=1}^n x_i^p \leq \sum_{i=1}^n y_i^p  \ , \ \ \ \ ( 0<p<1). $$
Taking $p$th roots,
we obtain the first case of the theorem, since the inequalities
$||x||_p \leq ||y||_p$ extend to the endpoint cases $p=0,1$
automatically by continuity in $p.$
Next, if in addition $\sum_i x_i  = \sum_i y_i \ ,$ then
$\sum_i x_it  = \sum_i y_it \ $ for all $t\geq 0$. Subtracting from this
the inequality $ \log f_r(x,t) \leq \log f_r(y,t)$, one obtains
\begin{equation} \label{sum}
 \sum_i \left( x_it - \log(1 + x_it + \dots + \frac{(x_it)^r}{r!}) \right) \
\geq \  \sum_i \left( y_it - \log(1 + y_it + \dots + \frac{(y_it)^r}{r!}) \right).
\end{equation}
Consider the function $\delta_r(s):= s-\log(1 + s + \dots + \frac{s^r}{r!})$
for $s\geq 0.$
We have $\delta_r(s) \geq s- \log(e^s) =0$ for $s\geq 0.$ When $s\to +\infty,$ we
have $\delta_r(s) = O(s) + O(\log (s^r) ) = O(s).$ When $s \to 0^{+}$ we have
$\delta_r(s) = s - \log\left(e^s-O(s^{r+1})\right) =
s-\log\left(e^s(1-e^{-s}O(s^{r+1})\right) =
s-\log(e^s)-\log\left(1-e^{-s}O(s^{r+1})\right) = O(e^{-s}O(s^{r+1}))
=O(s^{r+1}).$ It follows that the  integrals
$$J_r(p)  := \int_0^\infty
\left(s-\log(1 + s + \dots + \frac{s^r}{r!})\right)
\ s^{-p} \  \frac{ds}{s}$$
are finite (and positive) for all real $p$ in the interval $1<p<r+1.$
Replacing $s$
by $at$ gives the  new identity
\begin{equation}
\label{id2}
\frac{1}{J_r(p)}\int_0^\infty
 \left(at-\log(1 + at + \dots + \frac{(at)^r}{r!})\right)
\ t^{-p} \  \frac{dt}{t} \ = \ a^p \ \ \  \ (a\geq0,\ 1<p<r+1).
\end{equation}
Thus, when $1<p<r+1$ we may integrate (\ref{sum}) with respect to
$t^{-p} \  \frac{dt}{t}
\frac{1}{J_r(p)}$ and use (\ref{id2}) to obtain
$$ \sum_i x_i^p \geq \sum_i y_i^p  \ , \ \ \ \ ( 1<p<r+1). $$
By continuity in $p,$ we obtain $||x||_p \geq ||y||_p$ for
$1\leq p \leq r+1.$
\end{proof}

\noindent {\bf Remarks.}
{\bf (1.1)} If $x \prec_F y$ and $\sum_i x_i  = \sum_i y_i \ ,$
Theorem 1 shows that
$ ||x||_p \geq ||y||_p \ ,\  1 \leq p \leq \infty, (p \in \R).$
{\bf (1.2)} The endpoint cases $p=0,1,r+1$ can also be deduced directly from
the hypotheses instead of by continuity in $p.$ To see this note
that $||x||_0^{nr} = (x_1\dots x_n)^r = (r!)^nF_{rn,r}(x), \ ||x||_1^r =
E_1(x)^r = r!F_{r,r}(x), \ ||x||_{r+1}^{r+1} =
E_1(x)^{r+1} - (r+1)!F_{r+1,r}(x).$\\

\noindent {\Large {\bf 4. Schur-concavity of the $ F_{k,r}(x)$ and related polynomials.} }
\medskip

 For a symmetric polynomial
$\Phi$ with positive coefficients, Schur-concavity on $\R_+^n$
is equivalent to the property $x \prec^w y \Rightarrow
\Phi(x) \leq \Phi(y) $ \cite[Theorem 3.A.8]{MO}, and by
(\ref{S-O}) reduces to checking that
\begin{equation}
\label{Sch}
\bigg(\frac{\partial \Phi}{\partial x_i} -
\frac{\partial \Phi}{\partial x_j}\bigg)/(x_j -x_i)
\ \geq  \ 0 \  \ \ \ \forall x \in \R_{+}^n\ , \ ( x_i\neq x_j).
\end{equation}
To see that this is true for each $\Phi = F_{k,r}\ ,$ one can consider
at once the whole generating function $f_r(x,t)$ (\ref{gen1}) and compute
$\frac{\partial f_{r}(x,t)}{\partial x_i} -
\frac{\partial f_r(x,t)}{\partial x_j}$. One easily obtains
$(x_j-x_i)$ times a new polynomial in $(x,t)$
with {\it positive coefficients }. This shows simultaneously that all the
coefficients $F_{k,r}(x)$ of the generating function
$f_{r}(x,t)$ satisfy (\ref{Sch}) and hence are Schur-concave.
We omit the details since this fact is part of a special case of results of
Proschan and Sethuraman \cite[Theorem 3.J.2, Example 3.J.2.b]{MO}.
The part that we need may be stated as follows.
 Define
$I_k := \{ p \in \Z^n \ |\ p_i \geq 0, \
 \sum p_i = k \}.$
  A subset $S \subset I_k$ is said
 to be Schur-concave if its indicator function
 ${\bf 1}_S$ is Schur-concave on $I_k \ ,$
 that is if
 $p \succ q  \Rightarrow  {\bf 1}_S(p) \leq {\bf 1}_S(q),$
 or equivalently
 $$ p \in S, \ q \in I_k,  \ p \succ q
\Rightarrow  q \in S. $$
\begin{prop} [Proschan and Sethuraman].
Let $k,n \geq 1$ and let $S \subset I_k$ be Schur-concave.
Define the polynomial $H_S$ by
\begin{equation}
\label{HS}
H_S(x_1,\dots,x_n)\  = \ \ \sum_{p \in S}
\ \ \prod_{i=1}^n \frac{x_i^{p_i}}{p_i!}\ .
\end{equation}
Then for all $x,y \in \R_{+}^n \ , $
$ x \succ y
\Rightarrow  H_S(x) \leq H_S(y), $ that is, $H_S$ is Schur-concave
on $\R_{+}^n.$
\end{prop}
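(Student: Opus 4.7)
The plan is to verify the Schur--Ostrowski condition (\ref{Sch}) directly for $\Phi = H_S$. Two preliminaries: (i) $S$ is automatically invariant under permutations of coordinates, since any two permutations of $p \in I_k$ majorize each other, whence $H_S$ is a symmetric polynomial; (ii) $H_S$ has nonnegative coefficients by construction. By the reduction (\ref{Sch}), it therefore suffices to produce, for each pair $i \neq j$, an identity
$$\frac{\partial H_S}{\partial x_i} - \frac{\partial H_S}{\partial x_j} = (x_j - x_i)\, Q_{ij}(x)$$
in which $Q_{ij}$ is a polynomial in $x$ with nonnegative coefficients.

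Fix $i \neq j$. Group the monomials appearing in $H_S$ by the outer exponent data $q = (p_l)_{l \neq i,j}$, and set $m := k - \sum_{l \neq i,j} q_l$. For each such $q$, the inner pair $(a, b) := (p_i, p_j)$ ranges over
$$T_q := \bigl\{(a,b) \in \Z_{\geq 0}^2 :\ a+b = m \text{ and the associated } p \text{ lies in } S\bigr\}.$$
The combinatorial reduction I rely on is that $T_q$ is Schur-concave as a subset of $\{a+b=m\}$: reducing $\max(a,b)$ while preserving $a+b$ is a T-transform on the ambient $p$, producing a vector majorized by $p$ and hence still in $S$. Because the majorization order on $\{a+b=m\}$ is essentially the total order by $\max(a,b)$, $T_q$ is either empty or takes the symmetric form $\{(a,m-a) : m-M \leq a \leq M\}$ for some $M \geq m/2$.

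The remainder is an explicit telescoping calculation. Writing $u_c := x_i^c x_j^{m-1-c}/(c!\,(m-1-c)!)$ for $0 \leq c \leq m-1$, the contribution of the $q$-slice is, after factoring out the nonnegative outer product $\prod_{l \neq i,j} x_l^{q_l}/q_l!$,
$$R_q(x_i,x_j) \;=\; \sum_{(a,b) \in T_q}\left(\frac{x_i^{a-1}x_j^b}{(a-1)!\,b!} - \frac{x_i^a x_j^{b-1}}{a!\,(b-1)!}\right),$$
in which each interior pair $(a,m-a)$, $1 \leq a \leq m-1$, contributes $u_{a-1} - u_a$ while the endpoints $(0,m)$ and $(m,0)$ (when present) contribute $-u_0$ and $u_{m-1}$ respectively. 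For $T_q$ the full simplex the sum telescopes to $0$, reflecting the $x_i \leftrightarrow x_j$ symmetry of $(x_i + x_j)^m/m!$; for a proper nonempty $T_q$ with parameter $M$, the sum collapses to $u_{m-M-1} - u_M$, which with $\alpha := m-M-1 \leq \beta := M$ equals
$$\frac{x_i^\alpha x_j^\beta - x_i^\beta x_j^\alpha}{\alpha!\,\beta!} \;=\; \frac{x_i^\alpha x_j^\alpha\,(x_j - x_i)\sum_{l=0}^{\beta-\alpha-1} x_i^{\beta-\alpha-1-l} x_j^l}{\alpha!\,\beta!},$$
manifestly $(x_j - x_i)$ times a polynomial with nonnegative coefficients. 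Summing over $q$ produces $Q_{ij}$ and completes the verification. The only conceptual step is the Schur-concavity of each slice $T_q$; after that, the proof reduces to the elementary factorization $x_j^r - x_i^r = (x_j - x_i)(x_j^{r-1} + \dots + x_i^{r-1})$.
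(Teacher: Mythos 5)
Your proof is correct and takes essentially the same approach the paper indicates: verify the Schur--Ostrowski condition (\ref{Sch}) by showing $\partial H_S/\partial x_i - \partial H_S/\partial x_j$ factors as $(x_j - x_i)$ times a polynomial with nonnegative coefficients. The paper only asserts that this ``direct computation'' works, while you supply the missing details -- the reduction to slices $T_q$, the observation that each slice inherits Schur-concavity via T-transforms, and the telescoping that collapses each slice's contribution to a single generalized-Vandermonde-type difference $u_{m-M-1} - u_M$.
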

To prove just this proposition, the proof of the general theorem
of Proschan and Sethuraman \cite[Theorem 3.J.2]{MO}
can be simplified slightly by directly computing (\ref{Sch})
with $\Phi = H_S$ ;
one obtains a polynomial with positive coefficients.
One example of
a Schur-concave $S \subset I_k$ is the set $S=S_{k,r}:= \{ p \in \Z^n \ |\ p_i \geq 0, \
 \sum p_i = k , \  \max p_i \leq r \}$ for fixed integers $n,k,r \geq 1,$
 as is easily verified. By definition (\ref{Fkr}) we have $F_{k,r} = H_S$
 with $S=S_{k,r}$ ,
 so that the Schur concavity of $F_{k,r}$ is a special case of the
 proposition.
Another example is the set $S=T_{k,r}$ defined by
$$ T_{k,r} :=
\{ \ p \in \Z^n \ |\ p_i \geq 0,
\ \sum p_i = k , \ \ p \
\ {\rm has \ at \ least \ } r
{\rm \ nonzero \ entries} \  \}.
$$
Hence, the corresponding polynomials $H_S= H_{T_{k,r}}=:G_{k,r}$
are Schur-concave. This example can be used to
characterize the majorization relation on $\R_+^n$
as follows.
\begin{prop}
Let $x,y \in \R_{+}^n  $ with $ \sum x_i = \sum y_i $ and suppose that
$G_{k,r}(x) \leq G_{k,r}(y) $ for all integers $k,r \geq 1.$
Then $ x \succ y . $ {\rm (}Combining this with the fact that
the $G_{k,r}(x)$ are Schur-concave, we have the characterization
$ x \succ y  \ \Leftrightarrow \ \sum x_i = \sum y_i$ and $
 \forall \ k,r \geq 1, \ G_{k,r}(x) \leq G_{k,r}(y).${\rm )}
\end{prop}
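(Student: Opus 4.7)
The plan is to extract the partial sums $S_r(x) := x_1^* + \cdots + x_r^*$ from the polynomials $G_{k,r}$ by taking $k \to \infty$, thereby obtaining $S_r(x) \geq S_r(y)$ for every $r \geq 1$. Combined with $S_n(x) = S_n(y)$ (the equal-sum hypothesis), this is precisely the majorization $x \succ y$. The approach is essentially the strategy foreshadowed by the paper's remark that $\sum_{i=1}^r x_i^* = \lim_{k\to\infty}\bigl(\sum_{|A|=r}(\sum_{i\in A} x_i)^k\bigr)^{1/k}$.

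First I would introduce the complementary family
$$\tilde G_{k,r}(x) := \sum_{\substack{p \in \Z_{\geq 0}^n \\ \sum p_i = k,\ |\mathrm{supp}(p)| \leq r}} \prod_{i=1}^n \frac{x_i^{p_i}}{p_i!}.$$
Since $T_{k,r+1}$ and $\{p : \sum p_i = k,\ |\mathrm{supp}(p)| \leq r\}$ partition the set of compositions of $k$ into $n$ nonnegative parts, and the multinomial theorem gives $\sum_{p : \sum p_i = k} \prod_i x_i^{p_i}/p_i! = (\sum_i x_i)^k/k!$, one obtains the identity
$$G_{k,r+1}(x) + \tilde G_{k,r}(x) = \frac{\bigl(\sum_i x_i\bigr)^k}{k!}.$$
Given $\sum x_i = \sum y_i$, this turns the hypothesis $G_{k,r+1}(x) \leq G_{k,r+1}(y)$ into the equivalent statement $\tilde G_{k,r}(x) \geq \tilde G_{k,r}(y)$, for every $k \geq 1$ and $r \geq 0$.

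Next, I would establish the asymptotic
$$\lim_{k\to\infty}\bigl(k! \cdot \tilde G_{k,r}(x)\bigr)^{1/k} = S_r(x) \qquad (1 \leq r \leq n).$$
For the lower bound, restricting the defining sum of $\tilde G_{k,r}$ to those $p$ supported in the top-$r$ indices of $x$ and applying the multinomial theorem yields $\tilde G_{k,r}(x) \geq S_r(x)^k/k!$. For the upper bound, I note that $\{p : |\mathrm{supp}(p)| \leq r\}$ is the union over $r$-subsets $A \subseteq \{1,\dots,n\}$ of $\{p : \mathrm{supp}(p) \subseteq A\}$, whence (by overcounting and multinomial)
$$\tilde G_{k,r}(x) \leq \sum_{|A|=r}\frac{(\sum_{i\in A} x_i)^k}{k!} \leq \binom{n}{r}\frac{S_r(x)^k}{k!},$$
using that $S_r(x) = \max_{|A|=r}\sum_{i\in A} x_i$ for $x\in\R_+^n$. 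Taking $k$-th roots and letting $k\to\infty$ produces the claimed limit, since $\binom{n}{r}^{1/k}\to 1$.

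Combining the two steps, $\tilde G_{k,r}(x) \geq \tilde G_{k,r}(y)$ for all $k$ forces $S_r(x) \geq S_r(y)$ for every $r\geq 1$; together with $S_n(x) = S_n(y)$ this is the majorization $x \succ y$. I do not foresee any serious obstacle: once the dual family $\tilde G_{k,r}$ is identified, both the generating-function-style identity of the first step and the asymptotics of the second step are routine.
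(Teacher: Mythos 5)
Your proof is correct, and it takes a genuinely more direct route than the paper's. Your $\tilde G_{k,r}$ is the paper's $\overline{G_{k,r+1}}$ (``at most $r$ distinct factors'' is the same as ``fewer than $r+1$''), and the complementary identity $G_{k,r+1}+\tilde G_{k,r}=\frac{1}{k!}(\sum x_i)^k$ is exactly the paper's definition of $\overline{G_{k,r}}$. Where the two proofs diverge is what happens next: you bound $\tilde G_{k,r}(x)$ on both sides by multiples of $S_r(x)^k/k!$ (lower bound by restricting the support to the top $r$ indices; upper bound by overcounting over $r$-subsets and using $S_r(x)=\max_{|A|=r}\sum_{i\in A}x_i$), and let $k\to\infty$ directly. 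The paper, by contrast, introduces an auxiliary family $M_{k,r}(x)=\sum_{i_1<\dots<i_r}(x_{i_1}+\dots+x_{i_r})^k$, observes $M_{k,r}^{1/k}\to S_r$, and then proves a nontrivial combinatorial identity expressing $\frac{1}{k!}M_{k,r}$ as a positive linear combination of $\overline{G_{k,2}},\dots,\overline{G_{k,r+1}}$ (via the multinomial theorem and a summation by parts with Pascal's identity). Your approach skips the $M_{k,r}$'s and the positive-combination identity entirely, which makes the argument shorter and more elementary. What the paper's detour buys is a bonus result stated right after the proposition: the $M_{k,r}$ form a Schur-convex family that also characterizes majorization, which the paper's route exhibits explicitly and yours does not. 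For the proposition as stated, your version is a clean, self-contained alternative.
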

\begin{proof}
Given $x \in \R_{+}^n$
and a fixed $1\leq r \leq n,$ we may ``compute" the
sum $s_r(x):=\sum_{i=1}^r \ x_i^*$
by first noting that it is the maximum of all possible sums of $r$
entries of $x,$ and then computing this maximum by
using integer $k$-norms as $k \to \infty$ :
$$
s_r(x) = \lim_{k\to \infty} \
\left( \sum_{1\leq i_1 < \dots < i_r \leq n}
\ (x_{i_1} + \dots + x_{i_r})^k \ \right)^{\frac{1}{k}}.
$$
This leads us to consider the symmetric polynomials
\begin{equation}
\label{Mkr}
M_{k,r}(x):= \sum_{1\leq i_1 < \dots < i_r \leq n}
\ (x_{i_1} + \dots + x_{i_r})^k  \  \ \ \ \ \ \ \ (k \in \N).
\end{equation}
By the preceding remarks, the conditions $M_{k,r}(x) \geq M_{k,r}(y) \ \forall k, r$
imply $x \succ y$, for any $x,y \in \R_{+}^n  $ with $ \sum x_i = \sum y_i .$
It now remains to relate the polynomials $M_{k,r}(x)$ to the
$G_{k,r}(x)$ of the proposition.
Consider the polynomials $\overline{G_{k,r}}$ defined by
$$ \overline{G_{k,r}}(x) \ := \
\frac{1}{k!}(x_1 + \dots + x_n)^k - G_{k,r}(x) \ ,
$$
which may be thought of as the sum of all monomial terms in the
expansion of $\frac{1}{k!}(x_1 + \dots + x_n)^k$ containing
{\it less than} $r$ distinct $x_i$ as factors.
To prove the proposition, it suffices to show that when
$k \geq r,$ each $M_{k,r}$
is a linear combination, with positive coefficients, of some of the
$\overline{G_{k,r}}$.
Let
$$\Delta\overline{G_{k,r}}(x)=\overline{G_{k,r+1}}(x) - \overline{G_{k,r}}(x)$$
i.e. the sum of all terms containing
{\it exactly} $r$ distinct $x_i$ as factors.
An expansion of each power in $M_{k,r}$ by the multinomial
theorem gives
$$\frac{1}{k!}M_{k,r} =
{\small \bigg(\begin{array}{c}
n-r  \\
0
 \end{array}\bigg) }
\Delta\overline{G_{k,r}} \ + \
{\small \bigg(\begin{array}{c}
n-r+1  \\
1
 \end{array}\bigg) }
\Delta\overline{G_{k,r-1}}
\ + \dots + \
{\small \bigg(\begin{array}{c}
n-1  \\
r-1
 \end{array}\bigg) }
\Delta\overline{G_{k,1}} \ \ .
$$
Since these binomial coefficients are increasing from left to right,
the result follows after a summation by parts. In fact,
by Pascal's identity we obtain the explicit formula
$$\frac{1}{k!}M_{k,r} =
{\small \bigg(\begin{array}{c}
n-r-1  \\
0
 \end{array}\bigg) }
\overline{G_{k,r+1}} \ + \
{\small \bigg(\begin{array}{c}
n-r  \\
1
 \end{array}\bigg) }
\overline{G_{k,r}}
\ + \dots + \
{\small \bigg(\begin{array}{c}
n-2  \\
r-1
 \end{array}\bigg) }
\overline{G_{k,2}} \ \ .
$$
\end{proof}
Remark: The polynomials $M= M_{k,r}$ (\ref{Mkr}) used in the proof
are Schur-convex, as is easily verified by checking that
$\bigg(\frac{\partial M}{\partial x_i} -
\frac{\partial M}{\partial x_j}\bigg)/(x_i -x_j)
\ \geq  \ 0 \  \forall x \in \R_{+}^n$.
Hence, majorization can be characterized using
these polynomials as well. That is, for $x,y \in \R_+^n$ with
$ \sum x_i = \sum y_i $, we have
$x \succ y \Leftrightarrow M_{k,r}(x) \geq M_{k,r}(y) \ \forall k, r$.

In contrast to the $G_{k,r}$, as noted in \S2.3, the conditions
$F_{k,r}(x) \leq F_{k,r}(y) \ \forall \ k,r \geq 1$
(i.e. $x \prec_F y $)
are not strong enough to imply majorization
when $\sum x_i = \sum y_i$ (if $n\geq 3$).
Nevertheless, it will be shown that
$x \prec_F y $ is strong enough to imply $x \prec_L y $
(Theorem 9 in the next section).\\

\noindent {\Large {\bf 5. The relation $ x \prec_L y $ and
$  \sup_{||z||_1 =1} \ \prod_i (z_i+ t x_i)$.} }
\bigskip

\noindent  {\bf 5.1. Functionals related to majorization.}
\medskip

We will discuss certain functionals which
lead naturally to the relation $\prec_L$, in several equivalent
forms.
These functionals were introduced in \cite[\S 3.2]{K1}
in connection with the conjectures of \S2.2, in an attempt to
formulate a weaker version of the majorization relation suited
to those conjectures. We begin in the context
of positive functions $X, Y$ on $[0,2\pi]$, which in the application
to the questions of \S2.2 would be given by $X= |D_N|^2$ and
$Y=|f|^2$. This is not significant
and we will soon switch to analogous considerations with
vectors $x,y \in \R_{+}^n$.
The initial idea is that the majorization relation
itself can be stated in terms of a functional $J$ as follows.
If $X:[0,2\pi] \to [0,\infty)$ is a bounded measurable function,
define
\begin{equation}
\label{Jinf}
J(X,t):=\inf_{||g||_1 \leq t} \quad
\int_0^{2\pi} |1-g(\theta)| X(\theta)
d\theta/2\pi, \quad (t \geq 0),
\end{equation}
where the norm is defined by $||g||_1 := \int_0^{2\pi} |g(\theta)|
d\theta/2\pi $. It is easily seen that
$$
J(X,t)=\inf_{|E| \leq 2\pi t} \quad
\int_0^{2\pi} (1-{\bf 1}_E(\theta)) X(\theta)
d\theta/2\pi
=\int_{2\pi t}^{2\pi} X^*(\theta)
d\theta / 2\pi
$$
where $X^*$ is the decreasing rearrangement of $X$. Hence,
for two functions
$X,Y\geq 0$,
the relation $X \prec^w Y$ (see \S1) is equivalent
to the property
\begin{equation}
\label{J}
J(X,t) \leq J(Y,t) \ \ \forall t \geq 0.
\end{equation}
The latter is thus also equivalent to the usual majorization relation  $X \succ Y$
when $\int_0^{2\pi}X =\int_0^{2\pi}Y$.
Moreover, if one examines the ``standard" proofs \cite{HLP} of the equivalence
\begin{equation}
\label{maj}
X \succ Y \ \ \ \Leftrightarrow \ \ \
\int_0^{2\pi} (X-\lambda)_+ \ d\theta \geq
\int_0^{2\pi} (Y-\lambda)_+ \ d\theta, \quad \forall \lambda \geq 0,
\end{equation}
(assuming $\int_0^{2\pi}X =\int_0^{2\pi}Y$),
one sees that these proofs amount to considering the Legendre transform
(``Fenchel conjugate", ``Young transform", etc.)
\begin{equation}
\label{L1}
L(X, \lambda) :=
\inf_{ t \geq 0} \ \left( J(X,t) + \lambda t \right)\ , \ \ \ \lambda \geq 0.
\end{equation}
It is well known that such a transform can be inverted by
another similar transform, in this case
\begin{equation}
\label{L2}
\sup_{ \lambda \geq 0} \ \left( L(X,\lambda) - \lambda t \right)\ \
=  \ J(X,t) \ , \ \ \ t \geq 0,
\end{equation}
if, for instance, the input function $J(X,t)$ is a convex, nonincreasing and
nonegative function of $t$ on $[0,\infty)$ \cite[Ch. 4]{Stoer}. (The latter hypotheses
are easily verified in the present example.)
From this point of view, it is the existence of the two relations (\ref{L1})
and (\ref{L2}) which ensures that one obtains both directions
of the equivalence in (\ref{maj}) \cite[\S3.2]{K1}, \cite{B}.
(To be specific: An easy calculation gives
$L(X,\lambda) = \int_{0}^{2\pi} \min(X(\theta), \lambda)
d\theta / 2\pi$. By (\ref{L1}) and (\ref{L2}) it follows that
 condition (\ref{J}) (i.e. the relation $X \prec^w Y$) is equivalent to
 $\int_{0}^{2\pi} \min(X(\theta), \lambda)
d\theta \  \leq \ \int_{0}^{2\pi} \min(Y(\theta), \lambda) d\theta \ \forall
\lambda \geq 0$,
which in turn is clearly equivalent to
$\int_0^{2\pi} (X-\lambda)_+ \ d\theta \geq
\int_0^{2\pi} (Y-\lambda)_+ \ d\theta \quad \forall \lambda \geq 0$
under the extra condition $\int_0^{2\pi}X =\int_0^{2\pi}Y$.)
We remark that in the theory of interpolation of norms \cite{BL},
such Legendre transforms occur implicitly in various duality results,
 as is well known.
For example, the $K$-functional and the $E$-functional are
Legendre transforms of each other \cite[Ch. 7]{BL}, \cite[\S3.4]{Per}.

Once the majorization relation has been reformulated as above,
it is natural to look for variants of it by modifying
(\ref{Jinf}) in some simple way, such as changing the form of the integrand
or the constraints on $g$.
The following modification was studied in \cite[\S 3.2]{K1}:
\begin{equation}
\label{Iinf}
I_1(X,t):= \inf_{||P||_2 \leq t, \ P(0)=0} \quad
\int_0^{2\pi} |1-P(e^{i\theta})|^2 X(\theta)
 \ d\theta/2\pi, \quad (t \geq 0),
\end{equation}
where $P$ ranges over all complex polynomials
with $P(0)=0$, and
$||P||_2 := (\int_0^{2\pi}  |P(e^{i\theta})|^2 d\theta/2\pi)^{1/2}.$
The choice of this modification was essentially guided by systematic guessing in view of
desired applications to the sharp Littlewood conjectures (see \S2.2).
For example, one particular motivation for the analytic factor $1-P(e^{i\theta})= 1-P(z)$ was
the remark that $(1-z)D_N(z) = 1-z^N$, which suggests that in some sense $D_N(z)$ is
 ``annihilated" by such factors
more easily than are the other $f(z)$'s entering in the conjectures.
Next, the functional (\ref{Iinf}) induces a relation $\prec_1$ via the definition
$$ X\prec_1 Y \ \ \  \Leftrightarrow \ \ \ I_1(X,t) \leq I_1(Y,t) \ \ \forall t\geq0.$$
Using the above method of Legendre transforms
\cite[Proposition 3.2.3]{K1}, it can be shown that
\begin{equation}
\label{leg}
X\prec_1 Y \ \ \  \Leftrightarrow \ \ \
\int_0^{2\pi} \log(X(\theta)+ \lambda)
 \ d\theta  \quad \leq \quad
\int_0^{2\pi} \log(Y(\theta)+ \lambda)
 \ d\theta , \quad   \quad (\forall \ \lambda \geq 0).
 \end{equation}
(We review the proof in Lemma 5 and Theorem 6, in the discrete case.)
This shows that $X \prec_1 Y$ is weaker than the weak
version of majorization
$X \prec^w Y$ (see \S1),
and that, as noted in \S2.3, it only implies
 the $L^p$ relations $||X||_p \leq ||Y||_p \ , \
( 0 \leq p \leq 1),$ and
$||X||_p \geq ||Y||_p \ ,\ ( 1 \leq p \leq 2),$
if $||X||_1 = ||Y||_1\ $. Thus we arrive at
the problem of how to further modify our functional (\ref{Iinf}) so that
the latter interval of $L^p$ relations becomes $1 \leq p \leq \infty$
instead of $1 \leq p \leq 2$, but so that the relation induced by the
functional is still weaker than majorization.
 (More precisely, weak enough so that it does {\it not} imply $||X||_p \leq ||Y||_p$
 for some $p < 0$, in view of the multiple-root counter-examples of \S2.2.) We give
such a modification below in (\ref{hyinf}) and (\ref{dhyinf}). The main idea will be that
the ``cause" of the latter restriction $  p \leq 2$ can be traced back to the use of ``2" in
the constraint condition $||P||_2 \leq t$ appearing in (\ref{Iinf}), and that consequently
one needs to try a different kind of constraint condition.
We will then switch to the discrete setting  and prove several equivalent forms
of the relation induced by
(\ref{dhyinf}) (Theorem 7), one of which is the relation
$\prec_L$ defined by (\ref{L}).\\

\noindent  {\bf 5.2.
A modification of $I_1(X,t)$ ; the functional $I_\infty(X,t)$.}
\medskip

We first give a purely ``measure theoretic" formula
for  $I_1(X,t),$ in the following lemma.
That is, there will be no mention of polynomials
or other analytic functions.

\begin{lemma}
For any bounded measurable $X \geq 0$ we have
\begin{equation}
\label{hinf}
I_1(X,t)= \inf_{||h||_2^2 \leq 1+t^2, \ ||h||_0=1} \quad
\int_0^{2\pi} |h(\theta)|^2 X(\theta)
 \ d\theta/2\pi, \quad (t \geq 0),
 \end{equation}
 where $h$ runs over $L^2$ functions and
 $||h||_0 := \exp \left( \int_0^{2\pi} \log|h(\theta)|
 \ d\theta/2\pi \right)$ {\rm(}the geometric mean{\rm)}.
\end{lemma}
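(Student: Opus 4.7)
The plan is to transform both infima into the same optimization problem via the substitution $H := 1 - P$. Under this substitution, $P$ ranges over polynomials with $P(0)=0$ if and only if $H$ ranges over polynomials with $H(0)=1$. Since $\int_0^{2\pi} P(e^{i\theta})\,d\theta/2\pi = P(0) = 0$, the cross terms in the expansion of $|1-P|^2$ vanish upon integration, yielding $||H||_2^2 = 1 + ||P||_2^2$; the constraint $||P||_2 \leq t$ is thus equivalent to $||H||_2^2 \leq 1+t^2$, and $I_1(X,t)$ equals the infimum of $\int_0^{2\pi} |H(e^{i\theta})|^2 X(\theta)\,d\theta/2\pi$ over polynomials $H$ with $H(0)=1$ and $||H||_2^2 \leq 1+t^2$.

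For the direction ``RHS of (\ref{hinf}) $\leq I_1$'', given any such admissible polynomial $H$, Jensen's formula from complex analysis gives $||H||_0 \geq |H(0)| = 1$, so the function $h := H/||H||_0$ satisfies $||h||_0 = 1$ and $||h||_2^2 = ||H||_2^2/||H||_0^2 \leq 1+t^2$, while $\int |h|^2 X = \int |H|^2 X / ||H||_0^2 \leq \int |H|^2 X$. Taking infima proves this direction.

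For the reverse direction, I would construct from $h$ an outer function with the same boundary modulus. Given $h \in L^2$ with $||h||_0 = 1$, the condition $\int \log|h(\theta)|\,d\theta/2\pi = 0$ combined with $\log_+|h| \leq |h| \in L^1$ forces $\log|h| \in L^1$, so the outer function
\begin{equation*}
H(z) := \exp\left( \frac{1}{2\pi} \int_0^{2\pi} \frac{e^{i\theta}+z}{e^{i\theta}-z} \log|h(\theta)|\,d\theta \right)
\end{equation*}
lies in $H^2$, has boundary values $|H|=|h|$ a.e., and satisfies $H(0) = \exp\int\log|h(\theta)|\,d\theta/2\pi = 1$. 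Hence $||H||_2 = ||h||_2 \leq (1+t^2)^{1/2}$ and $\int |H|^2 X = \int |h|^2 X$. The Taylor partial sums $H_n(z) = \sum_{k=0}^n \widehat H(k) z^k$ are then polynomials with $H_n(0) = 1$, $||H_n||_2 \leq ||H||_2$ by Parseval, and $\int |H_n|^2 X \to \int |H|^2 X$ since $X$ is bounded; this yields $I_1(X,t) \leq \int |H|^2 X = \int |h|^2 X$.

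The only nontrivial ingredient is the outer-function construction in the reverse direction, which is standard $H^p$-theory once one verifies $\log|h| \in L^1$; the latter is exactly what the constraints $||h||_0 = 1$ and $h \in L^2$ provide, so this is more a conceptual than technical obstacle, and everything else reduces to the elementary substitution $H=1-P$ together with Jensen's inequality.
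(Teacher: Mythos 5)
Your proof is correct, and it takes a slightly different route from the paper's in the ``easy'' direction. Both arguments begin with the substitution $H=1-P$ (the paper writes $Q$ for this), reducing $I_1$ to an infimum over polynomials with $H(0)=1$ and $||H||_2^2\le 1+t^2$. For the inequality ``RHS $\le I_1$'', the paper replaces $H$ by Blaschke-modified polynomials, moving all roots outside the closed unit disc one at a time; this manufactures a polynomial with $||H||_0=1$ exactly, without changing $H(0)$ or increasing the $L^2$ constraint or the objective. You instead apply Jensen's inequality directly to conclude $||H||_0\ge|H(0)|=1$ and rescale $h:=H/||H||_0$, which hits the constraint $||h||_0=1$ exactly while only improving $||h||_2$ and the objective. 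The two routes are equivalent in content — Jensen's inequality is precisely what the Blaschke argument proves constructively — but yours is shorter and avoids the factoring-and-reflecting of roots. For the reverse direction both proofs construct the same outer function with boundary modulus $|h|$ and constant term $||h||_0=1$, then pass to Taylor/Fourier partial sums; your verification that $\log|h|\in L^1$ follows from $||h||_0=1$ together with $h\in L^2$ is a point the paper leaves implicit, and it is worth stating.

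One very small remark: the lemma allows general $L^2$ functions $h$, whereas the paper's reverse-direction argument fixes a nonnegative $h$; this is harmless since only $|h|$ appears in all three constraints and the objective, so one may assume $h\ge 0$ without loss of generality, and your outer-function construction implicitly does exactly this by working with $\log|h|$. Everything else checks out.
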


\begin{proof} This follows by the ideas in various proofs of Szeg\"o's theorem
 \cite[Ch. 4, Theorem 3.1]{Gar}, \cite[Ch. 2]{Koo}:
Fix $X$ and let $t\geq 0.$
The definition (\ref{Iinf}) can be restated in the form
$$I_1(X,t)= \inf_{||Q||_2^2 \leq 1+t^2, \ Q(0)=1} \quad
\int_0^{2\pi} |Q(e^{i\theta})|^2 X(\theta)
 \ d\theta/2\pi, \quad (t \geq 0), $$
where $Q$ runs over polynomials. Next, we may
restrict $Q$ to polynomials having no roots in the open unit disc
$|z| < 1.$ This can be seen by using Blaschke factors, as follows.
Given any polynomial $Q$ satisfying the constraints
($||Q||_2^2 \leq 1+t^2, \ Q(0)=1$), we may
factor it as $Q(z) = (1-r_1z)\dots(1-r_mz).$
If some factor, say $(1-r_1z),$ has $|r_1| > 1,$ then we
replace $Q$ by the new polynomial
$$Q_1(z) := (1-\frac{1}{\overline{r_1}}z)
(1-r_2z)\dots(1-r_mz) = \frac{1}{\overline{r_1}}
\frac{\overline{r_1}-z}{1-r_1z}
Q(z),$$
which still has $Q_1(0)=1$ and
on the unit circle satisfies $|Q_1(e^{i\theta})|
=\frac{1}{|r_1|}|Q(e^{i\theta})| \leq |Q(e^{i\theta})|.$
Thus $Q_1$ also satisfies the constraint
$||Q_1||_2^2 \leq 1+t^2$ {\it and} it gives
$\int_0^{2\pi} |Q_1(e^{i\theta})|^2 X(\theta)
 \ d\theta/2\pi \leq
 \int_0^{2\pi} |Q(e^{i\theta})|^2 X(\theta)
 \ d\theta/2\pi.$ Proceeding this way, we can move all
 of the undesirable roots of $Q$ to the complement of the open unit disc,
 which proves the claim. Now observe that if a polynomial $Q$ has
 no roots in $|z| < 1,$ the condition $Q(0)=1$ implies
 that $||Q||_0 = 1.$ So, if $\alpha$ denotes the infimum on the
 right hand side of (\ref{hinf}), we have shown that $I_1(X,t) \geq \alpha.$

 To prove $I_1(X,t) \leq \alpha,$ fix a nonnegative $h \in L^2$ with
 $||h||_2^2 \leq 1+t^2, \ ||h||_0=1.$ Then there is an
 ``outer function" $f \in H^2$
 with $|f|=h$ on the unit circle whose constant Fourier coefficient is
 $\hat{f}(n)|_{n=0} = f(z)|_{z=0} =||h||_0=1.$ Taking $Q_n(e^{i\theta})$
 to be the polynomials defined by truncating the Fourier series of
 $f(e^{i\theta})$ at the integers $n,$ we have $Q_n(0) = 1, \ ||Q_n||_2^2 \leq
 ||f||_2^2 = ||h||_2^2 \leq 1+t^2 \ .$ Moreover, as $n\to \infty\ ,$
 $ \int_0^{2\pi} |Q_n(e^{i\theta})|^2 X(\theta)
 \ d\theta/2\pi \to \int_0^{2\pi} |h(e^{i\theta})|^2 X(\theta)
 \ d\theta/2\pi$ since $X$ is bounded and $Q_n \to f$ in $L^2.$
\end{proof}

From (\ref{hinf}) it is evident that the squares serve
no further purpose. In other words,
$$
I_1(X,t)= \inf_{||h||_1 \leq 1+t^2, \ ||h||_0=1} \quad
\int_0^{2\pi} |h(\theta)| X(\theta)
 \ d\theta/2\pi, \quad (t \geq 0).
$$
Moreover, since we are always interested in defining our relations
by conditions of the form $I_1(X,t) \leq I_1(Y,t)$ over {\it all} $t\geq 0$, then a change of parameter
such as $t\to \sqrt{t}$ will not affect the relation
thus defined. Therefore, let us change notation at this point
by re-defining
\begin{equation}
\label{h1inf}
I_1(X,t):= \inf_{||h||_1 \leq 1+t, \ ||h||_0=1} \quad
\int_0^{2\pi} |h(\theta)| X(\theta)
 \ d\theta/2\pi, \quad (t \geq 0).
 \end{equation}
If we now wish to modify $I_1(X,t)$ such that the above mentioned
$p\leq 2$ restriction becomes $p\leq \infty$ (see \S5.1), a natural choice
would seem to be the functional defined by
\begin{equation}
\label{hyinf}
I_\infty(X,t):= \inf_{||h||_\infty \leq 1+t, \ ||h||_0=1} \quad
\int_0^{2\pi} |h(\theta)| X(\theta)
 \ d\theta/2\pi, \quad (t \geq 0).
 \end{equation}
 \noindent This functional is  essentially the $G(t,X)$ defined
 in \cite[eq. (37)]{K1}, except for a slight re-parametrization.
 The present discussion makes its relation to $I_1(X,t)$ more
 transparent.
From this point on we will continue the discussion for the
discrete versions of $I_1$ and $I_\infty$ defined
for $x \in \R_{+}^n$ by
\begin{equation}
\label{dh1inf}
I_1(x,t):= \inf_{h \in \R_{+}^n, \ ||h||_1 \leq 1+t, \ ||h||_0=1} \quad
\frac{1}{n}\sum_{i=1}^n h_i x_i , \quad (t \geq 0),
 \end{equation}
and
\begin{equation}
\label{dhyinf}
I_\infty(x,t):=
\inf_{h \in \R_{+}^n, \ ||h||_\infty \leq 1+t, \ ||h||_0=1} \quad
\frac{1}{n}\sum_{i=1}^n h_i x_i , \quad (t \geq 0),
 \end{equation}
where the $l^p$ norms are with respect to normalized
counting measure; thus $||h||_1 = \frac{1}{n}\sum_i |h_i|\ , \
||h||_0 = (\prod_i |h_i| )^{1/n} .$
Define corresponding relations on $\R_{+}^n$ by
\begin{equation}
\label{dh1}
 x\prec_1 y \ \Leftrightarrow \ I_1(x,t) \leq I_1(y,t) \ \ \forall t\geq0,
 \end{equation}
and
\begin{equation}
\label{dhy}
 x\prec_\infty y \
\Leftrightarrow \  I_\infty(x,t) \leq I_\infty(y,t) \ \ \forall t\geq0.
\end{equation}
\medskip

\noindent  {\bf 5.3.
Equivalence of $x \prec_\infty y$ and $x \prec_L y$.}
\medskip

\begin{lemma} For any $x \in \R_{+}^n,$
$$\inf_{ h \in \R_{+}^n, \ ||h||_0=1 } \
\frac{1}{n}\sum_{i=1}^n h_i x_i  \
= \ \left(\prod_{i=1}^n x_i \right)^{1/n}  =:||x||_0 \ .$$

\end{lemma}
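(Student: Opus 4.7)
The plan is to recognize this as essentially the weighted AM-GM inequality, together with an explicit construction showing the infimum is attained (or approached in a limit).

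First I would establish the lower bound. Under the constraint $\|h\|_0=1$, i.e.\ $\prod_{i=1}^n h_i = 1$ with $h_i \geq 0$ (which forces $h_i>0$), the classical arithmetic--geometric mean inequality applied to the nonnegative numbers $h_1 x_1,\dots,h_n x_n$ gives
\[
\frac{1}{n}\sum_{i=1}^n h_i x_i \ \geq \ \left(\prod_{i=1}^n h_i x_i\right)^{1/n} \ = \ \left(\prod_{i=1}^n h_i\right)^{1/n}\left(\prod_{i=1}^n x_i\right)^{1/n} \ = \ \|x\|_0 .
\]
This shows the infimum is at least $\|x\|_0$.

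Next I would exhibit a minimizer (or minimizing sequence). If all $x_i>0$, set $h_i := \|x\|_0 / x_i$. Then $\prod h_i = \|x\|_0^{\,n}/\prod x_i = 1$, so $h$ is admissible, and $\frac{1}{n}\sum h_i x_i = \frac{1}{n}\sum \|x\|_0 = \|x\|_0$, achieving equality. If some entry, say $x_j$, is zero, then $\|x\|_0=0$ and I would use a minimizing sequence: fix $M>0$ large and set $h_j = M$, $h_i = M^{-1/(n-1)}$ for $i\neq j$, so that $\prod h_i = M\cdot M^{-1}=1$. The objective becomes $\frac{1}{n}M^{-1/(n-1)}\sum_{i\neq j}x_i$, which tends to $0=\|x\|_0$ as $M\to\infty$.

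There is essentially no obstacle here; the only point requiring a small amount of care is the degenerate case where some $x_i$ vanishes, where the infimum is not attained but only approached in the limit. Combining the two bounds yields the claimed equality.
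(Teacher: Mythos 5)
Your proof is correct. The paper leaves this lemma as an exercise with no written proof, and your argument (lower bound by the arithmetic--geometric mean inequality, upper bound by the explicit choice $h_i = \|x\|_0/x_i$ when all $x_i>0$, with a minimizing sequence handling the degenerate case $\|x\|_0=0$) is exactly the natural one and is complete; the only negligible edge case is $n=1$ with $x_1=0$, where the constraint set is the singleton $\{h_1=1\}$ and the statement is immediate.
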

\begin{proof} Exercise.
\end{proof}
\noindent
We now give the following characterization of $\prec_1$
for later comparison with $\prec_\infty \ .$ This is just
the discrete version of the result (\ref{leg})
already mentioned, and the proof will be similar.

\begin{theorem} Let $x,y \in \R_{+}^n.$ Then
$$x\prec_1 y \ \ \ \Leftrightarrow \ \ \
\prod_{i=1}^n (x_i+\lambda) \   \leq \
  \prod_{i=1}^n (y_i+\lambda) \ ,  \  \forall \lambda \geq 0 .$$
\end{theorem}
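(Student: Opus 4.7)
The plan is to compute the Legendre transform $L(x,\lambda) := \inf_{t \geq 0}(I_1(x,t) + \lambda t)$ in closed form, express $\prod_i(x_i+\lambda)$ through it, and then invert the transform. Concretely, writing
\[
L(x,\lambda) \;=\; \inf_{t \geq 0}\;\inf_{h \in \R_+^n,\;\|h\|_1 \leq 1+t,\;\|h\|_0 = 1}\;\Big(\tfrac{1}{n}\sum_i h_i x_i + \lambda t\Big),
\]
I would swap the two infima, and for fixed $h$ minimize over $t$. AM--GM gives $\|h\|_1 \geq \|h\|_0 = 1$, so the $t$--constraint is active at $t^* = \|h\|_1 - 1 \geq 0$, and the bracket becomes $\tfrac{1}{n}\sum_i h_i(x_i+\lambda) - \lambda$. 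Then Lemma 5, applied to the shifted vector $(x_i + \lambda)_i \in \R_+^n$ with multiplier $h$, yields
\[
L(x,\lambda) \;=\; \|x+\lambda\|_0 \;-\; \lambda \;=\; \Big(\prod_{i=1}^n (x_i+\lambda)\Big)^{1/n} - \lambda.
\]

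With this formula in hand, the forward direction is immediate: if $x \prec_1 y$, then $I_1(x,t) \leq I_1(y,t)$ for all $t \geq 0$, whence $L(x,\lambda) \leq L(y,\lambda)$ for all $\lambda \geq 0$ by taking $\inf_t(\cdot + \lambda t)$, which is equivalent to the desired product inequality (raising the $n$th roots to the $n$th power).

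For the converse I would apply Legendre inversion to recover $I_1$, namely
\[
I_1(x,t) \;=\; \sup_{\lambda \geq 0}\big(L(x,\lambda) - \lambda t\big), \qquad t \geq 0.
\]
This is where some care is needed: the inversion requires $t \mapsto I_1(x,t)$ to be nonnegative, nonincreasing, and convex on $[0,\infty)$. The first two are obvious from (\ref{dh1inf}). For convexity, I would view $I_1(x,t)$ as $\inf_h F(h,t)$ where $F(h,t) := \tfrac{1}{n}\sum_i h_i x_i$ on the feasible set and $+\infty$ elsewhere; since $h \mapsto \|h\|_0$ is concave on $\R_+^n$, the joint feasible set
\[
\{(h,t) \in \R_+^n \times [0,\infty) : \|h\|_0 \geq 1,\; \|h\|_1 \leq 1 + t\}
\]
is convex, $F$ is jointly convex on $\R_+^n \times [0,\infty)$, and the partial infimum over $h$ is therefore convex in $t$. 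Granted the inversion, if $\prod_i(x_i+\lambda) \leq \prod_i(y_i+\lambda)$ for every $\lambda \geq 0$, then $L(x,\lambda) \leq L(y,\lambda)$, hence $\sup_\lambda(L(x,\lambda) - \lambda t) \leq \sup_\lambda(L(y,\lambda) - \lambda t)$, which reads $I_1(x,t) \leq I_1(y,t)$, i.e.\ $x \prec_1 y$.

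The main obstacle is the convexity step that justifies Legendre inversion; everything else is essentially a direct computation using Lemma 5. A minor subtlety is that some $x_i$ may vanish, in which case $\|x\|_0 = 0$; but since the inversion is applied after adding $\lambda \geq 0$, this only affects the boundary $\lambda = 0$ and can be handled by monotone continuity in $\lambda$ (or by noting that the $\sup_{\lambda \geq 0}$ may simply not be attained on the boundary while the identity $I_1(x,t) = \sup_{\lambda \geq 0}(L(x,\lambda) - \lambda t)$ still holds).
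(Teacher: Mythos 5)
Your proof takes essentially the same approach as the paper's: define the Legendre transform $L(x,\lambda)=\inf_{t\geq 0}(I_1(x,t)+\lambda t)$, swap the infima and eliminate $t$ via the active constraint $t=\|h\|_1-1$, apply Lemma 5 to the shifted vector $x+\lambda$ to get $L(x,\lambda)=\big(\prod_i(x_i+\lambda)\big)^{1/n}-\lambda$, and invoke Legendre inversion. The only place where you add substance beyond the paper (which merely asserts that $I_1(x,\cdot)$ is convex, nonincreasing and nonnegative as being "easy to check") is your justification of convexity via joint convexity and partial minimization over the convex feasible set $\{\|h\|_0\geq 1,\ \|h\|_1\leq 1+t\}$, which is a clean and correct way to fill in that step.
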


\begin{proof} It is easy to check that $I_1(x,t)$ is a
convex, nonincreasing and nonegative function of $t$ on $[0,\infty)$.
Hence, as above in (\ref{L1}) and (\ref{L2}), the Legendre transform defined by
\begin{equation}
\label{inf1}
L(x, \lambda) :=
\inf_{ t \geq 0} \ \left( I_1(x,t) + \lambda t \right)\ , \ \ \ \lambda \geq 0,
\end{equation}
can be inverted via
\begin{equation}
\label{sup1}
\sup_{ \lambda \geq 0} \ \left( L(x,\lambda) - \lambda t \right)\ \
=  \ I_1(x,t) \ , \ \ \ t \geq 0.
\end{equation}
Next, it is clear from (\ref{inf1}) and (\ref{sup1}) that
$I_1(x,t) \leq I_1(y,t) \ \forall t\geq 0$ if and only if
$L(x,\lambda) \leq L(y,\lambda) \ \forall \lambda \geq 0.$
It thus remains to compute $L(x, \lambda)$ explicitly.
In the following it is understood that $h \in \R_{+}^n.$  \\

$L(x,\lambda) =
{\displaystyle \inf_{ t \geq 0} }\ \left(
{\displaystyle\inf_{ ||h||_1 \leq 1+t, \ ||h||_0=1}}
\left(\frac{1}{n}\sum_{i=1}^n h_i x_i \right) \
 +  \ \lambda t \right)
$ \\

$=
{\displaystyle\inf_{ t \geq 0}} \ \left(
{\displaystyle\inf_{ ||h||_1 \leq 1+t, \ ||h||_0=1}}
\left(\frac{1}{n}\sum_{i=1}^n h_i x_i \
 +  \ \lambda t \right) \ \right) \ \ \ \ \ \ \ \ (=:a)
$ \\

$ = {\displaystyle\inf_{  ||h||_0=1}}
\left(\frac{1}{n}\sum_{i=1}^n h_i x_i
 \  +  \ \lambda (||h||_1-1) \right) \ \ \ \ \ \ \ \ (=: b)
$ \\

\noindent To see the last equality (that $a=b $) note first that
clearly $a \geq b,$ since
we may make the substitution $t \geq ||h||_1-1.$
On the other hand, given any $h \in \R_{+}^n$
with $||h||_0=1,$ note that $||h||_0 \leq ||h||_1\ ,$
so defining $s:=||h||_1-1$ gives $s\geq 0, \  ||h||_1 \leq 1+s,$
and thus $\frac{1}{n}\sum_{i=1}^n h_i x_i
 \  +  \ \lambda (||h||_1-1) = \frac{1}{n}\sum_{i=1}^n h_i x_i \
 +  \ \lambda s \geq a $ by definition of $\inf.$ Hence $b
 \geq a,$ and thus $a = b.$ Continuing the above computation, we have \\

 $ b = {\displaystyle\inf_{  ||h||_0=1}}
\left(\frac{1}{n}\sum_{i=1}^n h_i x_i
 \  +  \ \lambda\frac{1}{n}\sum_{i=1}^n h_i \right) \ \ -\lambda$ \\

 $= {\displaystyle\inf_{  ||h||_0=1}}
\left(\frac{1}{n}\sum_{i=1}^n h_i (x_i +\lambda)
  \right) \ \ -\lambda$ \\

$ = \left(\prod_{i=1}^n (x_i+\lambda) \right)^{1/n} \ -\lambda$ \\

\noindent
by Lemma 5. The theorem follows.
\end{proof}

\noindent Theorem 6 provides a connection between $\prec_1$
and the partial order $\prec_E$ of \S2.3 : We have
immediately that $x\prec_E y  $ implies $x\prec_1 y  $.
There is an analogous connection
between $\prec_\infty$ and $\prec_F\ $:
In Theorem 9 we will see that $x\prec_F y  $ implies $x\prec_\infty y  .$
But first let us connect $\prec_\infty$ with $\prec_L$.

\begin{theorem}
Let $x,y \in \R_{+}^n.$
The following three conditions are equivalent.

{\rm (a)} $ \  x\prec_\infty y \ \ $ {\rm [see (\ref{dhy}) ].}\\

{\rm (b)} $ {\displaystyle \ \sup_{||z||_1 =1} \ \ \prod_{i=1}^n (z_i + tx_i) \ \  \leq \
\ \sup_{||z||_1 =1} \ \ \prod_{i=1}^n (z_i + ty_i) \ , \ \  \forall t > 0 .}$ \\

{\rm (c)} $  \  x\prec_L y \ \ $ {\rm [see (\ref{L}) ].}
\end{theorem}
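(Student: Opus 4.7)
The strategy is to derive, by Lagrange multipliers, a Legendre-type identity expressing $I_\infty(x,t)$ in terms of the family $\{\Psi_\lambda(x)\}_{\lambda>0}$, together with a dual infimum identity expressing $\log S(x,t) := \log \sup_{||z||_1=1}\prod_i(z_i+tx_i)$ in terms of the same family. The equivalences (a)$\Leftrightarrow$(c) and (b)$\Leftrightarrow$(c) will then follow by inverting these identities, mirroring the role played by Legendre transforms in the proof of Theorem 6.

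For (a)$\Leftrightarrow$(c) I would first run the KKT analysis of the infimum defining $I_\infty(x,t)$. The optimum is $h_i=(1+t)\min(\lambda,x_i)/x_i$, where the Lagrange multiplier $\lambda>0$ attached to $||h||_0=1$ is determined by $\sum_i\log_+(x_i/\lambda)=n\log(1+t)$. Substituting back and using $\Psi_\lambda(x)=\sum_i\min(x_i,\lambda)+\lambda\sum_i\log_+(x_i/\lambda)$ gives
\[
I_\infty(x,t)=\frac{1+t}{n}\,\Psi_\lambda(x)-(1+t)\lambda\log(1+t).
\]
Since $\partial_\lambda^2\Psi_\lambda(x)=-\lambda^{-1}\#\{i:x_i>\lambda\}\le 0$, the right side is globally concave in $\lambda$, so this value coincides with its supremum over $\lambda>0$. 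Setting $u:=\log(1+t)\ge 0$ and dividing by $1+t$ one obtains
\[
\frac{I_\infty(x,e^u-1)}{e^u}=\sup_{\lambda>0}\left[\frac{\Psi_\lambda(x)}{n}-u\lambda\right],
\]
the concave conjugate of $\lambda\mapsto\Psi_\lambda(x)/n$. The direction (c)$\Rightarrow$(a) is pointwise in $\lambda$; (a)$\Rightarrow$(c) follows by biconjugation (the conjugate is $+\infty$ for $u<0$ and $\Psi_\bullet(x)/n$ is concave and upper-semicontinuous, so it is recovered from its conjugate as $\inf_{u\ge 0}[u\lambda + I_\infty(x,e^u-1)/e^u]$).

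For (b)$\Leftrightarrow$(c) I would carry out the analogous Lagrange analysis on the concave maximization defining $\log S(x,t)$. The primal optimum is $z_i^*+tx_i=t\max(\lambda,x_i)$ with $\sum_i(\lambda-x_i)_+=1/t$, and after collecting $\Psi_\lambda$ terms the dual formulation yields
\[
\log S(x,t)=n\log t-n+\inf_{\lambda>0}\left[\frac{\Psi_\lambda(x)}{\lambda}+n\log\lambda+\frac{1}{t\lambda}\right].
\]
The implication (c)$\Rightarrow$(b) is immediate because the $x$-dependence in the infimum is a positive multiple of $\Psi_\lambda(x)$. For the converse, fix $\lambda>0$; comparing the infimum against that specific $\lambda$ and rearranging gives, for every $t>0$,
\[
\Psi_\lambda(x)\ge\lambda\log S(x,t)-n\lambda\log(t\lambda)+n\lambda-\tfrac{1}{t}=:A(x,t,\lambda).
\]
Equality is reached at $t=1/\sum_i(\lambda-x_i)_+$ when $\lambda>\min_i x_i$, and as $t\to\infty$ when $\lambda\le\min_i x_i$ (using $\log S(x,t)=n\log t+\sum_i\log x_i+o(1)$, which follows from the same identity by taking the limit). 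Hence $\Psi_\lambda(x)=\sup_{t>0}A(x,t,\lambda)$. Since $A(\cdot,t,\lambda)$ depends on $x$ only through $\log S(x,t)$ with positive coefficient $\lambda$, hypothesis (b) transfers to the suprema, giving $\Psi_\lambda(x)\le\Psi_\lambda(y)$, that is, (c).

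The main delicate step will be the Lagrange/KKT argument for (b): identifying the ``water-filling'' form $z_i^*=(\beta-tx_i)_+$ of the primal optimum and verifying strong duality (routinely justified by Slater's condition on the simplex). Once the two identities above are in hand, both equivalences reduce to algebraic manipulation together with the concavity of $\lambda\mapsto\Psi_\lambda(x)$ already recorded.
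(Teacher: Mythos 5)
Your proposal is correct, and it takes a genuinely different route from the paper. The paper proves (a)$\Leftrightarrow$(b) by taking the Legendre transform of $t\mapsto I_\infty(x,t)$ and evaluating the result via von Neumann's minimax theorem (alternatively avoided by the explicit saddle point in Remarks (7.1)--(7.2)), then separately proves (b)$\Leftrightarrow$(c) by taking a second Legendre transform, this time in $\lambda$, of the functional $F(x,\lambda)=\sup_{||z||_1=\lambda}\frac{1}{n}\sum\log(x_i+z_i)$, and computing the transform by pointwise maximization. You instead bypass (b) entirely for the (a)$\Leftrightarrow$(c) equivalence: your KKT computation shows directly that $I_\infty(x,e^u-1)/e^u$ is the concave conjugate (in $\lambda$) of $\Psi_\lambda(x)/n$, and the reverse direction is then biconjugation, which is legitimate since $\lambda\mapsto\Psi_\lambda(x)/n$ is continuous and concave on $[0,\infty)$, extended by $-\infty$ to $\lambda<0$. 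Your (b)$\Leftrightarrow$(c) argument runs a second KKT/strong-duality computation on the simplex-constrained log-concave maximization -- I have checked that with $\nu$ the multiplier for $\sum z_i\le1$ and $\lambda=1/(t\nu)$ the Lagrange dual indeed reduces to exactly your infimum formula -- followed by an explicit inversion $\Psi_\lambda(x)=\sup_{t>0}A(x,t,\lambda)$, whose monotone dependence on $\log S(x,t)$ transfers (b) to (c). Your water-filling optimizer $z_i^*+tx_i=t\max(\lambda,x_i)$ is precisely the paper's ``rising water lemma'' (Remark (7.1)). The only cosmetic discrepancy is a factor of $n$: the paper uses the normalized $\ell^1$ norm $||z||_1=\frac{1}{n}\sum|z_i|$, so your constraint should read $\sum_i(\lambda-x_i)_+=n/t$ and the $\frac{1}{t\lambda}$ in the inf formula should be $\frac{n}{t\lambda}$; this propagates but does not affect the logic. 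The trade-off: the paper's double-Legendre presentation is more uniform and puts (b) at the center, while your approach is more explicit, avoids appealing to any minimax theorem, and makes the role of $\Psi_\lambda$ as a concave conjugate transparent. One small point worth flagging in a final write-up: in the case $\lambda\le\min_i x_i$ you use the asymptotic $\log S(x,t)=n\log t+\sum_i\log x_i+o(1)$, which requires $\min_i x_i>0$; but this is consistent, since $\lambda>0$ forces $\min_i x_i>0$ in that branch.
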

\begin{proof} We first prove that (a) and (b) are equivalent.
The method is the same as in the proof of Theorem 6.
It is easily checked that $I_\infty(x,t)$ is a convex, nonincreasing and
nonegative function of $t$ on $[0,\infty)$. Hence, if we define
a Legendre transform of $I_\infty$ by
\begin{equation}
\label{inf2}
L(x, \lambda) :=
\inf_{ t \geq 0} \ \left( I_\infty(x,t) + \lambda t \right)\ , \ \ \ \lambda \geq 0,
\end{equation}
it can be inverted as before in (\ref{sup1}):
\begin{equation}
\label{sup2}
\sup_{ \lambda \geq 0} \ \left( L(x,\lambda) - \lambda t \right)\ \
=  \ I_\infty(x,t) \ , \ \ \ t \geq 0.
\end{equation}
Thus we immediately have the equivalence
$I_\infty(x,t) \leq I_\infty(y,t) \ \forall t\geq 0 \ \ \Leftrightarrow \ \
L(x,\lambda) \leq L(y,\lambda) \ \forall \lambda \geq 0.$
We now compute $L(x,\lambda).$ As before, vectors
such as $h,x,y, z$ are assumed
to be in  $\R_{+}^n .$  \\

$L(x,\lambda) =
{\displaystyle \inf_{ t \geq 0} }\ \left(
{\displaystyle\inf_{ ||h||_\infty \leq 1+t, \ ||h||_0=1}}
\left(\frac{1}{n}\sum_{i=1}^n h_i x_i \right) \
 +  \ \lambda t \right)
$ \\

$=
{\displaystyle\inf_{ t \geq 0}} \ \left(
{\displaystyle\inf_{ ||h||_\infty \leq 1+t, \ ||h||_0=1}}
\left(\frac{1}{n}\sum_{i=1}^n h_i x_i \
 +  \ \lambda t \right) \ \right) \ \ \ \ \ \ \ \ (=: a)
$ \\

$ = {\displaystyle\inf_{  ||h||_0=1}}
\left(\frac{1}{n}\sum_{i=1}^n h_i x_i
 \  +  \ \lambda (||h||_\infty-1) \right) \ \ \ \ \ \ \ \ (=: b),
$ \\

\noindent where the proof that $a = b$ is similar
to the corresponding step in the proof of Theorem 6. Next \\

 $ b = {\displaystyle\inf_{  ||h||_0=1}}
\left(\frac{1}{n}\sum_{i=1}^n h_i x_i
 \  +  \ \lambda ||h||_\infty \right) \ \ -\lambda$ \\

$= {\displaystyle\inf_{  ||h||_0=1}}
\left(\frac{1}{n}\sum_{i=1}^n h_i x_i
 \  +  \ \lambda
 {\displaystyle\sup_{  ||z||_1=1}}
 \frac{1}{n}\sum_{i=1}^n h_i z_i
  \right) \ \ -\lambda$ \\

 $= {\displaystyle\inf_{  ||h||_0=1}}
\left(
{\displaystyle\sup_{  ||z||_1=1}}
\left(
\frac{1}{n}\sum_{i=1}^n h_i x_i
 \  +  \ \lambda
 \frac{1}{n}\sum_{i=1}^n h_i z_i
 \right)
  \right) \ \ -\lambda$ \\

$= {\displaystyle\inf_{  ||h||_0=1}}
\left(
{\displaystyle\sup_{  ||z||_1=1}}
\left(
\frac{1}{n}\sum_{i=1}^n h_i (x_i
 \  +  \ \lambda
  z_i)
 \right)
  \right) \ \ -\lambda  \ \ \ \ \ \ \ \ (=: a')$ \\

$= {\displaystyle\sup_{  ||z||_1=1}}
\left(
{\displaystyle\inf_{  ||h||_0=1}}
\left(
\frac{1}{n}\sum_{i=1}^n h_i (x_i
 \  +  \ \lambda
  z_i)
 \right)
  \right) \ \ -\lambda  \ \ \ \ \ \ \ \ (=: b')$ \\

$= {\displaystyle\sup_{  ||z||_1=1}}
 \ \left(\prod_i (x_i+\lambda z_i)  \right)^{1/n} \ \ -\lambda$ \\

\noindent where the equality $a' =b'$ is a consequence of
von Neuman's min-max theorem (see \cite[Theorem 4.2]{Sion}), and the last equality follows by Lemma 5. This completes the proof
of (a) $\Leftrightarrow$ (b). (Alternatively, one can easily
evaluate the supremum in (b), as well as find explicitly the saddle point
in the latter min-max problem, and
thus avoid appealing to a min-max theorem in the proof that $a' =b'$. This will be discussed
in Remarks (7.1) and (7.2) after the proof.)

We now prove (b) $\Leftrightarrow$ (c). First note that (b) may be
stated in the form
$$ {\displaystyle\sup_{  ||z||_1=\lambda}}
\ \  \frac{1}{n}\sum_i \log(x_i+z_i)
\ \ \leq \ \
 {\displaystyle\sup_{  ||z||_1=\lambda}}
 \ \ \frac{1}{n}\sum_i \log(y_i+z_i) , \ \ \  \ \ \ \forall \lambda > 0.$$
It is easy to check that the function
$$ F(x,\lambda) :={\displaystyle\sup_{  ||z||_1=\lambda}}
 \ \frac{1}{n}\sum_i \log(x_i+z_i), \ \ \  \ \ \ \lambda > 0, $$
 is concave and nondecreasing in $\lambda.$ Thus the following
 Legendre transform
$$ \mathcal{L}(x,t) :={\displaystyle\sup_{  \lambda > 0}}
 \ \left(F(x,\lambda) - \lambda t \right), \ \ \  \ \ \ t > 0, $$
 is well defined and may be inverted via
 $$ F(x,\lambda) ={\displaystyle\inf_{  t > 0}}
 \ \left(\mathcal{L}(x,t) + \lambda t \right),
 \ \ \  \ \ \ \lambda > 0. $$
 Hence, (b) is equivalent to $\mathcal{L}(x,t) \leq \mathcal{L}(y,t),
 \ \forall t > 0.$ We now compute $\mathcal{L}(x,t).$ \\

 $ \mathcal{L}(x,t) ={\displaystyle\sup_{  \lambda > 0}}
 \ \left({\displaystyle\sup_{  ||z||_1=\lambda}}
 \ \frac{1}{n}\sum_i \log(x_i+z_i)
  - \lambda t \right) $\\

$={\displaystyle\sup_{  \lambda > 0}}
 \ \left({\displaystyle\sup_{  ||z||_1=\lambda}}
 \ \left(\frac{1}{n}\sum_i \log(x_i+z_i)
  - ||z||_1 t\right) \right) $\\

$={\displaystyle\sup_{ z \in \R_{+}^n}}
 \ \left(\frac{1}{n}\sum_i \log(x_i+z_i)
  - ||z||_1 t \right) $\\

$={\displaystyle\sup_{ z \in \R_{+}^n}}
 \ \frac{1}{n}\sum_i  \left(\log(x_i+z_i)
  -  tz_i \right). $\\

\noindent
To compute the latter supremum, we simply choose each $z_i$ to
maximize the expression $\log(x_i+z_i) -  tz_i .$ For
$t>0,$ this is easily seen to occur when $z_i = (\frac{1}{t}-x_i)_+ \ ,$  giving\\

$\log(x_i+z_i) -  tz_i = \log(x_i+(\frac{1}{t}-x_i)_+)
-  t(\frac{1}{t}-x_i)_+ =
\max(\log(\frac{1}{t}), \log(x_i)) - (1-tx_i)_+ $\\

$= \min(tx_i,1) +\log_+(tx_i) -(1+\log t).$\\

$= t\left(\min(x_i,\frac{1}{t}) +
\frac{1}{t}\log_+(tx_i) \right) -(1+\log t).$\\

\noindent Thus
$$\mathcal{L}(x,t) = \frac{1}{n}t\sum_i
\left(\min(x_i,\frac{1}{t}) +
\frac{1}{t}\log_+(tx_i) \right) \ - \ (1+\log t) \ , \ \ \ \ \ \ t>0.$$
Hence, the relation $ \mathcal{L}(x,t) \leq \mathcal{L}(y,t) \ \forall
t > 0 $ is equivalent to the condition
$$\sum_i
\left(\min(x_i,\frac{1}{t}) +
\frac{1}{t}\log_+(tx_i) \right) \ \ \leq \ \
\sum_i
\left(\min(y_i,\frac{1}{t}) +
\frac{1}{t}\log_+(ty_i) \right), \ \ \ \ \ \ \forall
t > 0 $$
which is seen to be equivalent to (c) of the theorem upon identifying
$\frac{1}{t} $ with the parameter $\lambda$ in (\ref{psib}).  This completes the
proof of (b) $\Leftrightarrow$ (c).
\end{proof}

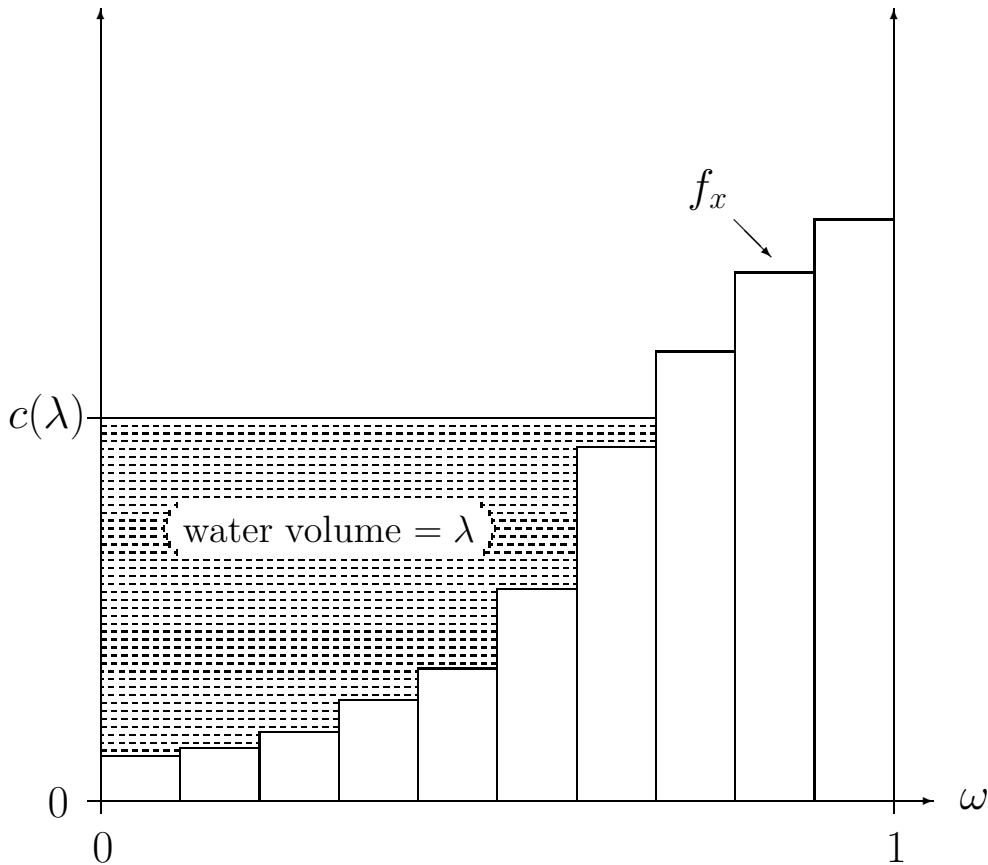
\begin{figure}[h] 
\begin{picture}(460,350)(-85,-15)

\multiput(0,-5)(0,0){1}{\line(0,1){300}}
\multiput(0,295)(0,0){1}{\vector(0,1){5}}
\multiput(-20,-6)(0,0){1}{{\LARGE $0$}}

\multiput(-5,0)(0,0){1}{\line(1,0){320}} 
\multiput(295,0)(0,0){1}{\vector(1,0){20}} 
\multiput(325,-3)(0,0){1}{\LARGE $\omega$}
\multiput(-3,-23)(0,0){1}{\LARGE $0$}
\multiput(297,-23)(0,0){1}{\LARGE $1$}
\multiput(300,295)(0,0){1}{\vector(0,1){5}}

\multiput(300,-5)(0,0){1}{\line(0,1){300}}

\multiput(0,17)(0,0){1}{\line(1,0){30}} 
\multiput(30,16.8)(0,0){1}{\line(0,1){3.4}} 
\multiput(0,19)(0,3){1}{\dashbox{2}(30,0){} } 
\multiput(30,20)(0,0){1}{\line(1,0){30}} 
\multiput(60,19.8)(0,0){1}{\line(0,1){6}} 
\multiput(0,22)(0,3){2}{\dashbox{2}(60,0){} } 
\multiput(60,26)(0,0){1}{\line(1,0){30}} 
\multiput(90,25.8)(0,0){1}{\line(0,1){12.4}} 
\multiput(0,28)(0,3){4}{\dashbox{2}(90,0){} } 
\multiput(90,38)(0,0){1}{\line(1,0){30}} 
\multiput(120,37.8)(0,0){1}{\line(0,1){12.4}} 
\multiput(0,40)(0,3){4}{\dashbox{2}(120,0){} } 
\multiput(120,50)(0,0){1}{\line(1,0){30}} 
\multiput(150,49.8)(0,0){1}{\line(0,1){30.4}} 

\multiput(0,52)(0,3){10}{\dashbox{2}(150,0){} } 

\multiput(150,80)(0,0){1}{\line(1,0){30}} 
\multiput(180,79.8)(0,0){1}{\line(0,1){54.4}} 
\multiput(0,82)(0,3){4}{\dashbox{2}(180,0){} } 

\multiput(222,227)(0,0){1}{\LARGE $f_x$}
\multiput(236,220)(0,0){1}{ \vector(1,-1){14}}

\multiput(0,94)(0,0){1}{\dashbox{2}(29,0){} } 
\multiput(0,97)(0,0){1}{\dashbox{2}(26,0){} } 
\multiput(0,100)(0,3){1}{\dashbox{2}(25,0){} } 
\multiput(0,103)(0,3){1}{\dashbox{2}(24,0){} } 
\multiput(0,106)(0,3){1}{\dashbox{2}(25,0){} } 
\multiput(0,109)(0,0){1}{\dashbox{2}(26,0){} } 
\multiput(0,112)(0,0){1}{\dashbox{2}(29,0){} } 

\multiput(145,94)(0,0){1}{\dashbox{2}(35,0){} } 
\multiput(147,97)(0,0){1}{\dashbox{2}(33,0){} } 
\multiput(148,100)(0,1){1}{\dashbox{2}(32,0){} } 
\multiput(149,103)(0,1){1}{\dashbox{2}(31,0){} } 
\multiput(148,106)(0,1){1}{\dashbox{2}(32,0){} } 
\multiput(147,109)(0,0){1}{\dashbox{2}(33,0){} } 
\multiput(144,112)(0,0){1}{\dashbox{2}(36,0){} } 

\multiput(0,115)(0,3){7}{\dashbox{2}(180,0){} } 

\multiput(180,134)(0,0){1}{\line(1,0){30}} 
\multiput(210,134.8)(0,0){1}{\line(0,1){35.4}} 
\multiput(0,136)(0,3){3}{\dashbox{2}(210,0){} } 
\multiput(-5,145)(0,0){1}{\line(1,0){215} } 
\multiput(-35,141)(0,0){1}{\LARGE $c(\lambda)$}

\multiput(210,170)(0,0){1}{\line(1,0){30}} 
\multiput(240,169.8)(0,0){1}{\line(0,1){30.4}} 
\multiput(240,200)(0,0){1}{\line(1,0){30}} 
\multiput(270,199.8)(0,0){1}{\line(0,1){20.4}} 
\multiput(270,220)(0,0){1}{\line(1,0){30}} 

\multiput(31,98)(0,0){1}{\Large water volume $= \lambda$}


\multiput(30,0)(0,0){1}{\line(0,1){20.2}} 

\multiput(60,0)(0,0){1}{\line(0,1){26.2}} 

\multiput(90,0)(0,0){1}{\line(0,1){38.0}} 

\multiput(120,0)(0,0){1}{\line(0,1){50.2}} 

\multiput(150,0)(0,0){1}{\line(0,1){80.2}} 

\multiput(180,0)(0,0){1}{\line(0,1){134.0}} 
\multiput(210,0)(0,0){1}{\line(0,1){170.0}} 
\multiput(240,0)(0,0){1}{\line(0,1){200.0}} 
\multiput(270,0)(0,0){1}{\line(0,1){220.0}} 


\end{picture}

\caption{ The rising water lemma - see (\ref{sup3}).}
\end{figure}


{\bf Remark (7.1)} The supremum  problem in (b) of Theorem 7 is
equivalent to the problem
\begin{equation}
\label{sup3}
\sup_{||z||_1 =\lambda} \ \prod_{i=1}^n (x_i+z_i)  \
\end{equation}
and can be solved
explicitly by a simple construction
that may be called the ``rising water lemma",
by analogy with the ``rising sun lemma"
of real analysis \cite[p. 293, Lemma A]{HLP2}.
Fix $x \in \R_{+}^n $ and consider any $\lambda \geq 0.$
We claim that the supremum (\ref{sup3}) is achieved for
$z =\widetilde{z}$ where $\widetilde{z}$ has the form
\begin{equation}
\label{cl1}
  \widetilde{z}_i = \max(c, x_i) - x_i
\end{equation}
for the (unique) constant $c = c(\lambda)$ such that
$||\widetilde{z}||_1:=\frac{1}{n}\sum_i \widetilde{z}_i = \lambda.$
Before proving the claim, we describe the graphical construction
that justifies calling it the ``rising water lemma" (see Fig. 1):
Without loss of generality, assume $x_1 \leq \dots \leq x_n $
and identify the vector $x \in \R_{+}^n$ with a
step function $f_x: [0,1] \to [0,\infty)$ with steps of width $1/n,$ that is
$f_x = \sum_{i=1}^n \ x_i \mathbf{1}_{[(i-1)/n,i/n)} \ $ (and also  $f_x(1) := x_n$),
whose graph $(\omega, f_x(\omega)), \ 0\leq \omega \leq 1 $ is to be thought of
as the profile of a mountain or water basin. (We assume that the
vertical drops are also part of the graph).
We now imagine a volume of water $\lambda$ (represented in two dimensions as having
total area equal to $\lambda$) poured into the region bounded
below by the graph of $f_x(\omega)$ and bounded on the sides
by the vertical lines $ \omega=0$ and $ \omega=1$. When the water settles,
the constant $c=c(\lambda)$
is the resulting water line (read off the vertical
axis), and $\widetilde{z}_i$ is the depth of the
water in each interval along the graph. (In particular,
$\widetilde{z}_i = 0$ if there is no water above the
$i$th interval).

To prove the claim (\ref{cl1}), observe that for any
vector $z \in \R_{+}^n $ with $||z||_1 = \lambda$ we
have the majorization relation $ x+z \succ x+\widetilde{z}$
(exercise). Hence $\prod_i (x_i+z_i) \leq \prod_i (x_i+\widetilde{z}_i),$
since $\log$ is a concave function. (Corollary: The same $\widetilde{z}$
gives the solution to any problem of the form
$\sup_{||z||_1 =\lambda} \ F(x+z)$ where $F$ is
a Schur-concave function.)

{\bf Remark (7.2)} Remark (7.1) leads to an alternative and direct proof that $a'=b'$ in
the proof of Theorem 7 above. That is,
for fixed $x \in \R_{+}^n \ , \lambda > 0 $ we can give a direct proof that
$$ \inf_{  ||h||_0=1}
\bigg(\sup_{  ||z||_1=\lambda}
 \ T(h,z) \bigg) \
=  \ \sup_{  ||z||_1=\lambda}
\bigg( \inf_{  ||h||_0=1}
 \ T(h,z) \bigg)
  $$
  where
  $$ T(h,z):= \frac{1}{n}\sum_{i=1}^n h_i (x_i
 \  +  \ z_i) ,$$
 and where the vectors $h, z \in \R_{+}^n.$
 As is well known, this follows immediately if we can exhibit a pair $\widetilde{h}, \widetilde{z} \in \R_{+}^n$
 with $||\widetilde{h}||_0=1, \ ||\widetilde{z}||_1=\lambda, $ having the ``saddle point" property,
\begin{equation}
\label{sad}
T(\widetilde{h},z) \leq T(\widetilde{h},\widetilde{z}) \leq T(h,\widetilde{z}) \ , \ \ \ \ \ \ \forall
h, z \in \R_{+}^n \ {\rm with} \ ||h||_0=1, \ ||z||_1=\lambda.
\end{equation}
To do this, let $\widetilde{z}$ be the vector defined by (\ref{cl1}) in Remark
(7.1) above, and define $\widetilde{h}$ by
$$ \widetilde{h}_i = ||x+\widetilde{z}||_0 (x_i+ \widetilde{z}_i)^{-1}. $$
Since $\lambda > 0,$ it follows that $x_i+ \widetilde{z}_i >0$ for all $i,$
so that $\widetilde{h}$ is well defined. It is easy to check that $||\widetilde{h}||_0=1$
and $ T(\widetilde{h}, \widetilde{z}) = ||x+\widetilde{z}||_0.$
For any $h \in \R_{+}^n$ with $||h||_0=1$ we have
by the arithmetic-geometric mean inequality
$$ T(h, \widetilde{z}) \geq ||h \cdot(x+\widetilde{z})||_0 = ||h||_0||x+\widetilde{z}||_0
= ||x+\widetilde{z}||_0 = T(\widetilde{h}, \widetilde{z}),$$
where $v \cdot w$ denotes the coordinate-wise product of vectors;
$(v \cdot w)_i \ := v_iw_i.$ As for the other inequality
in (\ref{sad}), first observe that
$$ \frac{1}{n}\sum_{i=1}^n \widetilde{h}_i \widetilde{z}_i = \lambda||\widetilde{h}||_\infty \ . $$
(This is because the minimum of $(x_i+ \widetilde{z}_i)$ is the constant $c$
in (\ref{cl1}), and $\widetilde{z}_i$ lives on those $i$ where this minimum
is achieved, that is, where $(x_i+ \widetilde{z}_i)=c.$) Therefore,
for any $z \in \R_{+}^n$ with $||z||_1=\lambda,$
$$T(\widetilde{h}, z) = \frac{1}{n}\sum_{i=1}^n \widetilde{h}_i (x_i
 \  +  \ z_i) = \frac{1}{n}\sum_{i=1}^n \widetilde{h}_i x_i
 +\frac{1}{n}\sum_{i=1}^n \widetilde{h}_i z_i$$
 $$\leq \frac{1}{n}\sum_{i=1}^n \widetilde{h}_i x_i + \lambda||\widetilde{h}||_\infty
 = \frac{1}{n}\sum_{i=1}^n \widetilde{h}_i x_i + \frac{1}{n}\sum_{i=1}^n \widetilde{h}_i \widetilde{z}_i
 =T(\widetilde{h}, \widetilde{z}).$$
 {\bf Remark (7.3)} Let $A$ be an $n \times n$ nonegative (semidefinite) Hermitian matrix and
 let $x_1 \geq \dots \geq x_n$ be its eigenvalues. Then
\begin{equation}
\label{supdet}
 \sup_{||z||_1 \leq 1} \ \prod_{i=1}^n (z_i+t x_i) \
 =  \ \sup_{tr(Z) \leq 1} \det (Z+tA), \ \ \  \forall t > 0, \
\end{equation}
where $Z$ runs over all $n \times n$ Hermitian $Z \geq 0$ with trace$(Z) \leq 1$.
To prove this, we invoke the result that adding a nonegative matrix to another
one has the effect of ``increasing" each of its eigenvalues \cite[Ch.7, Theorem 3]{Bell}. Thus, if
the eigenvalues of $(Z+tA)$ are listed as $w_1 \geq \dots \geq w_n$, then
we have $w_i \geq tx_i$ for all $i$. Hence $w_i = y_i + tx_i$ for some $y_i \geq 0$
with $\sum y_i = tr(Z)$. This shows that $\sup_{tr(Z) \leq 1} \det (Z+tA) \leq
\sup_{||z||_1 \leq 1} \ \prod_{i=1}^n (z_i+t x_i)$. The reverse inequality
is obvious.
 \bigskip

 \noindent  {\bf 5.4.
The functional $  \sup_{||z||_1 =1} \ \prod_i (z_i+ t x_i) $ and the polynomials
$F_{k,r}(x)$.}
\medskip

We now relate $\prec_\infty$ (and thus $\prec_L$) to the symmetric polynomials
$F_{k,r}(x)$ (\ref{Fkr}). The link will be provided
by the expression $\prod_i (z_i + tx_i)$ in (b) of Theorem 7, just
as $\prod_i (x_i+\lambda)$ was the link between $\prec_1$ and the
elementary symmetric polynomials (see Theorem 6).
\begin{lemma} Let $n, r \geq 1$ be integers.
Let $T_n = \{ z \in \R_{+}^n \ | \ \sum_{i=1}^n z_i \leq 1 \}.$
There exist constants $C_{n,k,r} > 0$ such that
\begin{equation}
\label{lem8}
\int_{T_n} \ \bigg( \prod_{i=1}^n ( z_i + tx_i ) \bigg)^r \
dz_1 \dots dz_n \
= \ \sum_{k=0}^{nr}C_{n,k,r} F_{k,r}(x)t^k
\end{equation}
for all $t \in \R, \ x \in \R^n .$
\end{lemma}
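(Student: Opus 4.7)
The plan is to prove the identity by direct binomial expansion followed by term-by-term integration over the simplex $T_n$. First I would expand each factor $(z_i + tx_i)^r$ using the binomial theorem and group the resulting monomials by the total power $k = k_1 + \cdots + k_n$ of $t$. The multi-indices that appear --- tuples $(k_1,\dots,k_n)$ of nonnegative integers with $\sum k_i = k$ and each $k_i \leq r$ --- are exactly those in Definition 1 of $F_{k,r}(x)$. In particular, the constraint $\max k_i \leq r$ emerges automatically from the fact that each $k_i$ is a binomial summation index, which is the combinatorial ``coincidence'' that makes this integral a natural generating function for the $F_{k,r}$.

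Next I would interchange sum and integral, reducing each $z$-integral to the Dirichlet formula
\[
\int_{T_n} z_1^{a_1} \cdots z_n^{a_n} \, dz_1 \cdots dz_n \ = \ \frac{a_1! \cdots a_n!}{(n + a_1 + \cdots + a_n)!}, \qquad a_i \in \{0,1,2,\dots\},
\]
which I would either cite from the literature or re-derive by induction on $n$ via the substitution $z_n = (1 - z_1 - \cdots - z_{n-1})\, u_n$. Applying it with $a_i = r - k_i$, so that $\sum a_i = nr - k$, and simplifying $\binom{r}{k_i}(r-k_i)! = r!/k_i!$, the coefficient of $t^k$ in the left-hand side becomes
\[
\frac{(r!)^n}{(nr+n-k)!} \sum_{\substack{\sum k_i = k \\ 0 \leq k_i \leq r}} \prod_{i=1}^n \frac{x_i^{k_i}}{k_i!} \ = \ \frac{(r!)^n}{(nr+n-k)!}\, F_{k,r}(x),
\]
so the identity holds with the explicit positive constants $C_{n,k,r} = (r!)^n / (nr+n-k)!$ announced in the paragraph following the lemma.

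I do not foresee any substantive obstacle here: the argument is a standard beta/Dirichlet integral computation. The only content of the lemma lies in recognizing that the index set produced by the binomial expansion matches exactly the one used in Definition 1 of $F_{k,r}$, so that no cancellation or sign-tracking is needed and all the coefficients $C_{n,k,r}$ come out manifestly positive.
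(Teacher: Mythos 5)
Your proposal is correct and follows essentially the same route as the paper's proof: expand $\prod_i(z_i+tx_i)^r$ by the binomial theorem, collect the coefficient of $t^k$, and evaluate the resulting $z$-integrals with the Dirichlet formula $\int_{T_n}\prod_i \frac{z_i^{a_i}}{a_i!}\,dz = \frac{1}{(n+\sum a_i)!}$, yielding $C_{n,k,r}=(r!)^n/(nr+n-k)!$. The only cosmetic difference is that the paper writes the binomial factors directly as $r!\,\frac{z_i^{a_i}}{a_i!}\frac{x_i^{k_i}}{k_i!}$ rather than as $\binom{r}{k_i}z_i^{r-k_i}x_i^{k_i}$ and then simplifying, but the computation is the same.
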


\begin{proof}
It is well known that
\begin{equation}
\label{beta}
\int_{T_n} \ \left(\prod_{i=1}^n  \frac{z_i^{a_i}}{ a_i!}
\right) \
dz_1 \dots dz_n
\ = \ \frac{1}{(n+ \sum_{i=1}^n a_i)!}  \ \ ,
\end{equation}
for any integers $a_i \geq 0$  \cite[p. 621, eq. 4.635.3]{GR}.
(This can be proved by induction on $n$. Alternatively, one may note
that the Laplace transform of $t^m/m!$ is $1/s^{m+1}$ and use the
convolution theorem.)
Next, by the binomial theorem,\\
$$ \bigg( \prod_{i=1}^n ( z_i + tx_i ) \bigg)^r
 \ = \  \prod_{i=1}^n ( z_i + tx_i )^r
 \ = \ \prod_{i=1}^n \left(
{\displaystyle \sum_{a_i+k_i =r} }
 r! \frac{z_i^{a_i}}{a_i!}
  \frac{x_i^{k_i}}{k_i!}t^{k_i}   \right) \ =: p(z,x,t). $$
Thus $p(z,x,t)$ is of degree $nr$ in $t$ and
the coefficient $c(z,x,k)$ of $t^k$  in $p(z,x,t)$
for a fixed $k$ is
$$
c(z,x,k) \ = \
\sum_{\sum_i k_i = k; \ k_i \leq r; \ a_i = r-k_i }
\ \ \ \ \prod_{i=1}^n
\left( r! \frac{z_i^{a_i}}{a_i!} \frac{x_i^{k_i}}{k_i!}\right).
$$
Note that the conditions under the summation give
 $\sum_i a_i = \sum_i (r-k_i) = nr-\sum_i k_i = nr-k.$
Hence, integrating over $T_n$ and using (\ref{beta}) gives
$$
\int_{T_n} \ c(z,x,k) \
dz_1 \dots dz_n
\ = \
\sum_{\sum_i k_i = k; \ k_i \leq r; \ a_i = r-k_i }
\ \ \ \
\int_{T_n} \
\prod_{i=1}^n
\left( r! \frac{z_i^{a_i}}{a_i!} \frac{x_i^{k_i}}{k_i!}\right)
\
dz_1 \dots dz_n
$$
$$
=\
\sum_{\sum_i k_i = k; \ k_i \leq r }
\ (r!)^n \frac{1}{(n+(nr-k))!}
\ \prod_{i=1}^n
 \frac{x_i^{k_i}}{k_i!} \ \
 = \ \ \frac{(r!)^n }{(nr+n-k)!} \ F_{k,r}(x) \ ,
$$
which is the desired result with the constants $C_{n,k,r}= \frac{(r!)^n }{(nr+n-k)!}.$
\end{proof}

{\bf Remark.} There is an integral identity similar to (\ref{lem8})
but having the generating function (\ref{gen1})
on the right side as follows:
\begin{equation}
\label{gen2}
 \int_{\R_{+}^n} \
  \prod_{i=1}^n \frac{(z_i + tx_i)^r}{r!}
 \ \ e^{-(z_1 + \dots + z_n)}
\ dz_1 \dots dz_n \  =  \ \prod_{i=1}^n \ P_r(x_it) \ =:
\sum_{k=0}^{nr} F_{k,r}(x)t^k \ :=: \ f_r(x,t).
\end{equation}
This can be seen immediately by factoring the integral in (\ref{gen2}) and
using the fact that $\int_0^\infty (u+s)^r \ e^{-u}du/r!
= 1 + s + \dots + \frac{s^r}{r!} =: P_r(s)$ (the ``incomplete gamma integral").
We have however not found any
uses for the identity (\ref{gen2}) in the present paper.

For the next result, recall the definitions of $x \prec_F y$ (\ref{F}),
$x\prec_\infty y$ (\ref{dhy}) , and $x\prec_L y$ (\ref{L}).

\begin{theorem} Let $x,y \in \R_{+}^n.$ If $x \prec_F y$ then $x\prec_L y$
{\rm(}equivalently, $x\prec_\infty y$, by Theorem 7{\rm)}.
\end{theorem}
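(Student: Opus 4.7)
The plan is to reduce the statement to Theorem 7(b) and then to deduce that supremum inequality from $x \prec_F y$ using Lemma 8 together with a classical $L^r \to L^\infty$ limit.

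By Theorem 7 it suffices to verify condition (b). Since the product $\prod_i (z_i+tx_i)$ is monotone nondecreasing in each $z_i$ and since the constraint $\|z\|_1 \leq 1$ (in the normalized sense of \S5.3) is the same as $\sum z_i \leq n$, the substitution $z_i \mapsto n z_i$ (with a corresponding rescaling of $t$) shows that (b) is equivalent to
\[
\sup_{z \in T_n}\prod_{i=1}^n (z_i+tx_i) \ \leq \ \sup_{z \in T_n}\prod_{i=1}^n (z_i+ty_i) \qquad (\forall t > 0),
\]
with $T_n = \{ z \in \R_{+}^n : \sum z_i \leq 1 \}$, the simplex of Lemma 8.

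Now fix an integer $r \geq 1$. Lemma 8 expresses the integral of $(\prod_i (z_i+tx_i))^r$ over $T_n$ as the polynomial $\sum_{k=0}^{nr} C_{n,k,r} F_{k,r}(x) t^k$, all of whose coefficients $C_{n,k,r}$ are strictly positive. The hypothesis $x \prec_F y$ asserts $F_{k,r}(x) \leq F_{k,r}(y)$ for all $k,r \geq 1$, and $F_{0,r}(x) = 1 = F_{0,r}(y)$ trivially, so the polynomial coefficients are dominated termwise. Hence
\[
\int_{T_n}\Bigl(\prod_{i=1}^n(z_i+tx_i)\Bigr)^r dz_1 \cdots dz_n \ \leq \ \int_{T_n}\Bigl(\prod_{i=1}^n(z_i+ty_i)\Bigr)^r dz_1 \cdots dz_n
\]
for every $t \geq 0$ and every integer $r \geq 1$.

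Finally, take $r$-th roots and let $r \to \infty$. The integrand $z \mapsto \prod_i (z_i+tx_i)$ is continuous and nonnegative on the compact set $T_n$, which has positive $n$-dimensional Lebesgue measure, so the standard fact $\lim_{r \to \infty}\|f\|_{L^r(T_n)} = \|f\|_{L^\infty(T_n)} = \max_{T_n} f$ applies, and the inequality passes to the limit. This yields the required supremum inequality, which is Theorem 7(b), and therefore $x \prec_L y$. I anticipate no substantive obstacle: the entire argument rests on the positivity of the coefficients $C_{n,k,r}$ in Lemma 8, which is the design feature of the family $\{F_{k,r}\}$; the scaling between the two simplices and the passage $r \to \infty$ are routine.
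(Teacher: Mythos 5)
Your proposal is correct and follows essentially the same route as the paper: combine Lemma 8 with the positivity of the constants $C_{n,k,r}$ to get the $L^r(T_n)$ comparison, then let $r\to\infty$ to pass to $L^\infty$, which yields Theorem 7(b). You also explicitly handle the rescaling between the normalized constraint $\|z\|_1\le 1$ of Theorem 7(b) and the simplex $T_n=\{\sum z_i\le 1\}$ of Lemma 8, a detail the paper's proof leaves implicit.
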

\begin{proof}
If $x \prec_F y$ then by definition $ F_{k,r}(x) \leq F_{k,r}(y)$ for all
$k,r.$ Now fix any $t>0$ and let $r \geq 1 $ be an integer. Lemma 8 shows that
$$|| \prod_{i=1}^n ( z_i + tx_i ) \ ||_{L^r(dz)}
\ \leq \ || \prod_{i=1}^n ( z_i + ty_i ) \ ||_{L^r(dz)}
$$
where the $L^r$-norm is taken with respect to $dz:=dz_1 \dots dz_n \ $, over the set $T_n=
\{\sum z_i \leq 1\}\cap \R_{+}^n$
used in Lemma 8.
Letting $r\to \infty$ gives the same inequality for
the $L^\infty$-norm. This gives (b) of Theorem 7, since the
suprema over $z\in T_n$ are achieved on
$\sum z_i = 1.$
\end{proof}

\noindent {\Large {\bf 6. Some results on the conjecture
``$\ x \prec_L y \ \Rightarrow \ x \prec_F y \ $".} }

The next easy lemma is one possible approach to proving statements
of the type ``$\ x \prec_L y \ \Rightarrow \ x \prec_F y \ $". (Although
it shifts the problem to the existence of a
certain path $\gamma$.) Given a family $\Phi:= \{\Phi_\lambda\}_{\lambda \in \Lambda}$
of real-valued functions on $\R^n$ differentiable at a point $p \in \R^n$, define
\begin{equation}
\label{cone}
C_{\Phi}(p) :=\{ c_1\nabla \Phi_{\lambda_1}(p) + \dots + c_k\nabla \Phi_{\lambda_k}(p)
\ | \ k \in \N, c_i \geq 0, \lambda_i \in \Lambda \}.
\end{equation}
The closure $\overline{C_{\Phi}(p)}$ will be called the
``positive cone" generated by the
gradients $\{\nabla \Phi_\lambda(p)\}_{\lambda \in \Lambda}\ $.

\begin{lemma}
Let $\Lambda$ be any index set, let $\Phi:= \{\Phi_\lambda\}_{\lambda \in \Lambda}$ be
a family of $C^1$ real-valued functions on $\R^n$,
 and let $x,y \in \R^n$
be two points such that $ \forall \lambda \in \Lambda,
\ \Phi_\lambda(x) \leq \Phi_\lambda(y).$ Suppose there exists
a $C^1$ path $\gamma : [0,1] \to \R^n$ such that

{\bf (a)} $\ \gamma(0) = x, \ \gamma(1) = y $, \ \ and  \ \
{\bf (b)} $\ 0\leq s \leq t \leq 1 \Rightarrow \forall \lambda \in \Lambda,
\ \Phi_\lambda(\gamma(s)) \leq \Phi_\lambda(\gamma(t))  $.

\noindent
Let $F$ be any $C^1$ real-valued function on a neighbourhood of $\gamma([0,1])$
such that

{\bf (c)} $\nabla F(p) \in \overline{C_{\Phi}(p)}$ for all $p \in \gamma ([0,1])$
 {\rm [see (\ref{cone})]}.

\noindent Then $F(x) \leq F(y).$
\end{lemma}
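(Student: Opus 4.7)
The plan is to write $F(y) - F(x) = F(\gamma(1)) - F(\gamma(0))$ as an integral along the path $\gamma$ and show the integrand is pointwise nonnegative. Since $F$ and $\gamma$ are both $C^1$, the chain rule gives
\begin{equation*}
F(y) - F(x) \ = \ \int_0^1 \frac{d}{dt} F(\gamma(t)) \, dt \ = \ \int_0^1 \nabla F(\gamma(t)) \cdot \gamma'(t) \, dt,
\end{equation*}
so it suffices to prove that $\nabla F(\gamma(t)) \cdot \gamma'(t) \geq 0$ for every $t \in [0,1]$.

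First I would fix $t \in [0,1]$ and exploit hypothesis (b): for each $\lambda \in \Lambda$, the scalar function $s \mapsto \Phi_\lambda(\gamma(s))$ is nondecreasing on $[0,1]$ and $C^1$, hence its derivative at $t$ is nonnegative. By the chain rule this derivative equals $\nabla \Phi_\lambda(\gamma(t)) \cdot \gamma'(t)$, so
\begin{equation*}
\nabla \Phi_\lambda(\gamma(t)) \cdot \gamma'(t) \ \geq \ 0 \quad \text{for all } \lambda \in \Lambda.
\end{equation*}
Taking finite nonnegative combinations preserves the inequality, so $v \cdot \gamma'(t) \geq 0$ for every $v \in C_{\Phi}(\gamma(t))$. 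Since the map $v \mapsto v \cdot \gamma'(t)$ is continuous on $\R^n$, the inequality extends to the closure, so it holds for every $v \in \overline{C_{\Phi}(\gamma(t))}$.

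Next I would invoke hypothesis (c) at the point $p = \gamma(t)$: $\nabla F(\gamma(t)) \in \overline{C_{\Phi}(\gamma(t))}$. Combining with the previous step yields $\nabla F(\gamma(t)) \cdot \gamma'(t) \geq 0$, and integrating over $[0,1]$ gives $F(y) - F(x) \geq 0$, which is the desired conclusion.

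There is essentially no obstacle here; the lemma is a direct application of the chain rule together with the observation that the positive cone condition on gradients forces directional derivatives along $\gamma$ to share signs. The only mild care needed is the passage from the open cone $C_{\Phi}(p)$ to its closure, which is handled by continuity of the inner product with $\gamma'(t)$. The real difficulty, as the authors already signal, lies not in this lemma but in producing such a path $\gamma$ in the first place, which is deferred to the later path-connectedness conjecture.
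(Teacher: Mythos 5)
Your proof is correct and follows essentially the same route as the paper's: reduce to showing $\frac{d}{dt}F(\gamma(t))\geq 0$, use (b) and the chain rule to get $\nabla\Phi_\lambda(\gamma(t))\cdot\gamma'(t)\geq 0$, pass to nonnegative combinations and then to the closure by continuity of the inner product, and invoke (c). The only cosmetic difference is that you write the conclusion explicitly as an integral via the fundamental theorem of calculus, whereas the paper simply observes that a nonnegative derivative implies monotonicity.
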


\begin{proof} Given the hypotheses, it suffices to verify that
$\forall t \in [0,1], \ \frac{d}{dt} F(\gamma(t)) \geq 0 .$
By (b) we have $\frac{d}{dt} \Phi_{\lambda}(\gamma(t)) \geq 0 $
for all $\lambda$ and all $t.$ Thus
$\nabla \Phi_{\lambda}(\gamma(t))\cdot \gamma'(t) \geq 0 ,$
by the chain rule.
Hence, for a fixed $t \in [0,1]$ we have $v\cdot \gamma'(t) \geq 0$
for all $v \in C_{\Phi}(\gamma(t))$ and thus also
for all $v$ in the closure of $ C_{\Phi}(\gamma(t)).$ So,
$ \frac{d}{dt} F(\gamma(t)) = \nabla F(\gamma(t))\cdot \gamma'(t)\geq 0 .$
\end{proof}

{\bf Remark (10.1)} Conversely, the property $\nabla F(p) \in \overline{C_{\Phi}(p)}$
is necessary in the following sense.
If $F$ and $\{\Phi_\lambda\}_{\lambda \in \Lambda}$ are
sufficiently regular on some open set $U$
(e.g. if the gradients $\nabla \Phi_{\lambda}$ are
nonzero and the $\nabla \Phi_{\lambda}/||\nabla \Phi_{\lambda}||$ are
equicontinuous on $U$), and
the implication $ \forall \lambda \in \Lambda,
\ \Phi_\lambda(x) \leq \Phi_\lambda(y) \Rightarrow F(x) \leq F(y)$
is known to hold for all $x,y \in U,$ then for all $p \in U, \ $
$\nabla F(p) \in \overline{C_{\Phi}(p)}$. Proof: If
this does not hold at some $p \in U$,
then we can find a vector $h \in \R^n$ and a fixed
$\delta > 0$ such that
$\nabla F(p)\cdot h < 0 $ and $ v\cdot h \geq \delta $
for all unit vectors $v \in
\overline{C_{\Phi}(p)}$, by a basic separation theorem of convex
analysis. It follows that there is a point $q=p+\epsilon h \in U$ near $p$
such that $F(p) > F(q)$ while $\forall \lambda \in \Lambda,
\ \Phi_\lambda(p) \leq \Phi_\lambda(q).$

{\bf Remark (10.2)} In the special case when $\Lambda = \{1,\dots , n\}$
and $\Phi := (\Phi_1, \dots, \Phi_n)$ is a $C^1$ diffeomorphism of an open set
$U$ onto a convex set in $\R^n$,
Marshall et al. \cite[Corollary 8]{MWW} shows that
the condition $\forall p \in U, \nabla F(p) \in \overline{C_{\Phi}(p)}$
is necessary and sufficient for $F$ to have the order-preserving
property $x,y \in U,  \forall \lambda \in \Lambda,
\ \Phi_\lambda(x) \leq \Phi_\lambda(y) \Rightarrow F(x) \leq F(y)$.
Under these hypotheses, a path $\gamma$ (in $U$) satisfying (a) and (b) of Lemma 10
exists automatically; take $\gamma(t):= \Phi^{-1}( (1-t)\Phi(x) + t\Phi(y) )$.

We now consider the example $\Lambda:= (0, \infty)$ and $\Phi_\lambda := \Psi_\lambda$
given by (\ref{psib}), extended if necessary to all $x\in \R^n$, so that
$ \Psi_\lambda(x) := \sum_{i=1}^n \psi_\lambda(x_i),
\ x \in \R^n, \ \lambda > 0,$ where $\psi_\lambda : \R \to \R $ is defined by
$$ \psi_\lambda(s)= \min(s,\lambda) + \lambda \log_+(s/\lambda) ,
 \ \ \ \ \ \ \  s \in \R, \ \lambda > 0,
$$
and by convention $\log_+(s):= 0 $ for all $s \in (-\infty,1]$
and $\log_+(s):= \log s $ for all $s \geq 1.$
Clearly, the $ \psi_\lambda$'s are $C^1$
in $s$ on all of $\R,$ and thus the $\Psi_\lambda$ are $C^1$ on $\R^n.$
We have
$$ \frac{\partial}{\partial s} \psi_\lambda(s)
= \left\{
\begin{array}{l}
 1 , \ \ \ \ \ {\rm if } \ s \leq \lambda, \\
\lambda/s, \ \ \ \ {\rm if }  \ s \geq \lambda .\\
\end{array}
\right.
$$
Thus, if  $x \in \mathcal{D}_n :=
 \{x\in \R^n \ | \ x_1 \geq \dots \geq x_n \geq 0\}$
and $\lambda > 0$ is such that
$x_1 \geq \dots \geq x_k \geq \lambda \geq
x_{k+1} \geq \dots \geq x_n \geq 0,$ then
$$\nabla \Psi_{\lambda}(x) = ( \frac{\lambda}{x_1}, \dots, \frac{\lambda}{x_k},
1,1,\dots, 1).$$

Our goal is to derive a simple criterion for a vector $B$ to belong
to the positive cone $\overline{C_\Psi(x)}$
of such gradients (see (\ref{cone})) at a fixed $x \in \mathcal{D}_n.$ For convenience,
consider the case $ x_1 > x_2 > \dots > x_n > 0.$ Multiplying each gradient
by $\frac{1}{\lambda}$, we may consider
our cone $\overline{C_\Psi(x)}$ to be generated
by the (uncountable) collection of vectors
$$ ( \frac{1}{x_1}, \dots, \frac{1}{x_k},
\frac{1}{\lambda},\frac{1}{\lambda},\dots, \frac{1}{\lambda})
  \ \ \ \ \ \ \ \ \  1\leq k \leq n-1,\ \ \  \lambda \in
  [x_k, x_{k+1}] \ .$$
But it suffices to keep just the $n$ vectors
$$ R_k := ( \frac{1}{x_1}, \dots, \frac{1}{x_k},
\frac{1}{x_k},\frac{1}{x_k},\dots, \frac{1}{x_k}) \ \ \ \ \ \ \ \ \  1\leq k \leq n,
$$
since it is clear that the other vectors are on the line segments
between the $R_k$ and $R_{k+1}.$ This shows that
$C_\Psi(x)= \{ c_1R_1 + \dots + c_nR_n  \ | \ c_i \geq 0 \}=\overline{C_\Psi(x)}$.
Let $R$ be the $n \times n$ matrix whose columns are the transposes
of the $R_k, \ k = 1,\dots , n$. (Clearly $R=R^T$.)
The linear system $RC=B^T$ (where
$C:=(c_1,\dots, c_n)^T,\ B:=(B_1, \dots, B_n)$)
can be row-reduced to diagonal form in an obvious
way, which we now illustrate for $n=4$ without loss of generality.
Put $A_k := \frac{1}{x_k},$
so that $ 0 < A_1 < A_2 < A_3 < A_4.$
The augmented matrix of the system $RC=B^T$ is then
$$
\left[
\begin{array}{cccc}
A_1 & A_1 & A_1 & A_1 \\
A_1 & A_2 & A_2 & A_2 \\
A_1 & A_2 & A_3 & A_3 \\
A_1 & A_2 & A_3 & A_4 \\
\end{array}
\left|
\begin{array}{c}
B_1\\
B_2\\
B_3\\
B_4\\
\end{array}\right] \right. \ .
$$
Subtracting row 3 from row 4, then row 2 from row 3, and finally
row 1 from row 2 gives
$$
\left[
\begin{array}{cccc}
a_1 & a_1 & a_1 & a_1 \\
0 & a_2 & a_2 & a_2 \\
0 & 0 & a_3 & a_3 \\
0 & 0 & 0 & a_4 \\
\end{array}
\left|
\begin{array}{c}
b_1\\
b_2\\
b_3\\
b_4\\
\end{array}\right] \right.
$$
where $a_k := A_k-A_{k-1}, \dots, \ a_1 := A_1 $ and similarly
$b_k := B_k-B_{k-1}, \dots, \ b_1 := B_1 .$ Now subtracting the
appropriate multiples of row 2 from row 1, row 3 from row 2,
and row 4 from row 3 gives
$$
\left[
\begin{array}{cccc}
a_1 & 0 & 0 & 0 \\
0 & a_2 & 0 & 0 \\
0 & 0 & a_3 & 0 \\
0 & 0 & 0 & a_4 \\
\end{array}
\left|
\begin{array}{c}
b_1-b_2(a_1/a_2)\\
b_2-b_3(a_2/a_3)\\
b_3-b_4(a_3/a_4)\\
b_4\\
\end{array}\right] \right. \ \ .
$$
This shows that a vector $B=(B_1, \dots, B_n)$ belongs
to $C_\Psi(x)$ if and only if $b_k-b_{k+1}(a_k/a_{k+1}) \geq 0,
\ k = 1, \dots, n-1$ and $b_n \geq 0,$ in the above notation.
Note that each of these inequalities involves at most 3 consecutive
 $A_k$ and $B_k.$ Hence,
  $B \in C_\Psi(x)$  if and only if each 3-vector
 $(B_k, B_{k+1}, B_{k+2})$ is in the projection of $C_\Psi(x)$ on these
 same 3 coordinates, which is just the positive cone in
 $\R^3$ generated by the three vectors $(A_k,A_k,A_k),(A_k,A_{k+1},A_{k+1}),
(A_k,A_{k+1},A_{k+2}).$
We may also observe that, if $B \in C_\Psi(x),$ then for {\it any}
3 coordinates $1\leq i < j < k \leq n$ (not necessarily consecutive),
the same situation holds - that is, $(B_i, B_j, B_k)$ must be in
the positive cone generated by $(A_i,A_i,A_i),(A_i,A_j,A_j),
(A_i,A_j,A_k).$ To see this, note that the projections of all
$R_m$ on the given coordinates $i,j,k$ are
on the line segments joining the latter three vectors.
(An analogous statement holds for any $g$
coordinates $1\leq i_1 < \dots < i_g \leq n$, not just $g=3$.)
Summarizing, we have:
\begin{lemma} Let $x \in \R^n_+ $ with $ x_1 > x_2 > \dots > x_n > 0$
and let $B=(B_1, \dots, B_n) \in \R^n.$ Then $B$ is in $\overline{C_\Psi(x)}$
{\rm [see (\ref{psib}) and (\ref{cone})]} if and only if

{\rm (a) } $0\leq B_1 \leq \dots \leq B_n $,

{\rm (b) } $B_i\frac{1}{x_j}-B_j\frac{1}{x_i} \geq 0$
whenever $1\leq i < j \leq n,$ and

{\rm (c) } $ (B_j - B_i)(\frac{1}{x_k} - \frac{1}{x_j})-
(B_k - B_j)(\frac{1}{x_j} - \frac{1}{x_i}) \geq 0$
whenever $1\leq i < j < k \leq n.$
\end{lemma}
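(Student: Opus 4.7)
The plan is to build on the Gaussian elimination already sketched in the text immediately preceding the lemma. First, following the $n=4$ display verbatim for arbitrary $n$, I introduce the abbreviations $A_k := 1/x_k$ together with the normalizations $A_0 := 0$, $B_0 := 0$, and set $a_k := A_k - A_{k-1}$, $b_k := B_k - B_{k-1}$. As noted above, $\overline{C_\Psi(x)}$ equals the already closed cone $\{c_1 R_1 + \dots + c_n R_n : c_i \geq 0\}$, and the linear system $RC = B^T$ row-reduces to a diagonal system whose unique solution is
\begin{equation*}
c_k \;=\; \frac{b_k}{a_k} - \frac{b_{k+1}}{a_{k+1}} \quad (1 \leq k \leq n-1), \qquad c_n \;=\; \frac{b_n}{a_n}.
\end{equation*}
Hence $B \in \overline{C_\Psi(x)}$ if and only if the sequence $s_k := b_k/a_k$ satisfies
\begin{equation*}
(\star) \qquad s_1 \;\geq\; s_2 \;\geq\; \dots \;\geq\; s_n \;\geq\; 0.
\end{equation*}

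Second, I would show that $(\star)$ is equivalent to (a), (b), (c). The key observation is that $(\star)$ asserts precisely that the piecewise linear function $f$ interpolating the $n+1$ points $(A_k, B_k)_{k=0}^n$ is both non-decreasing and concave on $[0, A_n]$. In this language, (a) is the monotonicity of $f$ on the extended grid (using $B_0 = 0$); (c) is the concavity slope inequality at an interior triple $i<j<k$ in $\{1,\dots,n\}$; and (b), after clearing denominators, is exactly the concavity slope inequality at the triple $(0,i,j)$, again using $A_0 = B_0 = 0$. Conversely, each of the three inequalities in $(\star)$ is itself a special case of (a)--(c): $s_n \geq 0$ is $B_n \geq B_{n-1}$, which lies in (a); $s_k \geq s_{k+1}$ for $2 \leq k \leq n-1$ is (c) at the consecutive triple $(k-1,k,k+1)$; and $s_1 \geq s_2$ rearranges to $B_1 A_2 \geq B_2 A_1$, which is (b) at $(1,2)$.

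The one step requiring explicit justification is the classical reduction of concavity on arbitrary triples to concavity on consecutive triples. I would handle it by writing $(B_j - B_i)/(A_j - A_i)$ as a convex combination of $s_{i+1}, \dots, s_j$ and $(B_k - B_j)/(A_k - A_j)$ as a convex combination of $s_{j+1}, \dots, s_k$; since $(\star)$ forces each element of the first batch to be at least each element of the second, the first average dominates the second, which is the concavity inequality of (c) (or of (b), when the leading index is the virtual one, $0$). An analogous telescoping, using $s_1 \geq \dots \geq s_n \geq 0$, recovers the full monotonicity chain $0 = B_0 \leq B_1 \leq \dots \leq B_n$ in (a).

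I do not expect a serious obstacle here: the row reduction already displayed in the excerpt supplies the core of the proof, and the remaining passage from (a)--(c) to $(\star)$ and back is a standard fact about concave sequences on a strictly increasing grid. The only bookkeeping requiring care is the uniform treatment of the boundary index via the convention $A_0 = B_0 = 0$, through which (b) emerges as the ``triple-with-zero'' specialization of (c) rather than as a logically independent condition.
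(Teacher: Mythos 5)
Your proof is correct and follows the paper's own sketch closely. You complete the Gaussian elimination for general $n$, obtaining that $B \in \overline{C_\Psi(x)}$ if and only if the ``slope'' sequence $s_k := b_k/a_k$ (with the convention $A_0 = B_0 = 0$) satisfies $s_1 \geq \dots \geq s_n \geq 0$, and then translate this chain into (a)--(c). The only place where you genuinely depart from the paper's argument is the forward implication from this consecutive-index characterization to the conditions (a)--(c) at \emph{arbitrary} index triples. The paper handles that step geometrically: it projects each generator $R_m$ onto the coordinates $i < j < k$ and notes that all these projections lie on the line segments joining the projections of $R_i$, $R_j$, $R_k$, so the projected cone is again simplicial and the $3\times 3$ determinant conditions drop out of Cramer's rule. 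You argue analytically instead, writing each secant slope as a positive-weighted average of the consecutive slopes $s_m$ and comparing averages; this is the standard discrete-concavity argument, and the interpretation it rests on --- that (a)--(c) say exactly that $B_k = \varphi(A_k)$ for some nondecreasing concave $\varphi$ with $\varphi(0)=0$ --- is precisely the Marshall--Walkup--Wets observation the paper itself records in the paragraph after the lemma. Both arguments are complete and of comparable length; the paper's projection phrasing has the mild advantage of making the same determinant pattern visibly reappear for every triple, which smooths the passage to Lemma 12, whereas yours is more self-contained and avoids the implicit appeal to Cramer's rule.
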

Conditions ${\rm (a)},{\rm (b)},{\rm (c)}$ may be stated more
concisely by saying that the matrix
\begin{equation}
\label{tp3a}
\left[
\begin{array}{ccc}
1 & 1 & 1 \\
B_i & B_j & B_k  \\
  \frac{1}{x_i} & \frac{1}{x_j} & \frac{1}{x_k}
\end{array}
\right]
\end{equation}
is totally positive. The fact that
it is possible to state all three of the conditions (a)-(c)
in terms of determinants in this manner is clear from the
discussion preceding the lemma. (The linear
systems $RC=B^T$ (of various sizes) could have been solved for $C$
by Cramer's rule instead of row operations, and the signs
of the $c_j$ in $C$ are determined by the numerators in Cramer's formula,
since $\det R > 0$ as already shown.)
We can further
remark that the only property of the sequence $A_k = \frac{1}{x_k}$
that was used was that it is strictly positive and strictly increasing.
In this generality, Marshall et al. \cite[Example 3]{MWW} notes that conditions (a)-(c)
are just another way of saying that the sequence $\{B_k\}$ is of the
form $B_k = \varphi(A_k)$ for some nondecreasing concave function
$\varphi$ on $[0,\infty)$ with $\varphi(0)=0$.

The extension of Lemma 11 to points $x \in \mathcal{D}_n$
is straightforward: If $x=0$ then the cone $\overline{C_\Psi(x)}$ is generated
by a single vector, $(1,1,\dots,1).$ If
$x_1 \geq \dots \geq x_m > 0=x_{m+1} = \dots = x_n$
for some fixed $1 \leq m < n$ then $\overline{C_\Psi(x)}$ is generated by
the above
$R_k $ for just $k=1, \dots ,m$ together with the
vector $R_\infty :=( 0, \dots, 0,
1,1,\dots, 1) $ having $0$ in the first $m$ entries, followed by $1$'s.
Thus
$\overline{C_\Psi(x)} = \{ c_1R_1 + \dots + c_mR_m +
c_{m+1}R_\infty  \ | \ c_i \geq 0 \}.$
We deduce that:
\begin{lemma} If $0 \neq x \in \mathcal{D}_n $ and
$x_1 \geq \dots \geq x_m > 0=x_{m+1} = \dots = x_n$, then
 a vector $B\in \R^n$ belongs to $\overline{C_\Psi(x)}$
{\rm [see (\ref{psib}) and (\ref{cone})]} if and only if

$B_m \leq B_{m+1} = \dots =B_n$ and {\rm (\ref{tp3a})}
is totally positive whenever $1\leq i \leq j \leq k \leq m$.
\end{lemma}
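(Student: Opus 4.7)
The plan is to deduce the lemma directly from the cone description given in the paragraph immediately preceding its statement, namely $\overline{C_\Psi(x)} = \{c_1R_1 + \dots + c_mR_m + c_{m+1}R_\infty : c_i \geq 0\}$, combined with (the argument of) Lemma 11 applied to the first $m$ coordinates.

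For the ``only if'' direction I would simply read off the coordinates of $B = \sum_{k=1}^m c_kR_k + c_{m+1}R_\infty$. Since $R_k$ has last $n-k$ entries equal to $1/x_k$ and $R_\infty$ is zero on the first $m$ positions and one on the rest, the last $n-m$ entries of $B$ all equal the same value $\sum_k c_k/x_k + c_{m+1}$, the $m$-th entry equals $\sum_k c_k/x_k$, so $B_m \leq B_{m+1} = \dots = B_n$. Moreover $R_\infty$ contributes nothing to the first $m$ entries, so the truncation $(B_1, \dots, B_m) \in \R^m$ is a nonnegative combination of the truncations $\widetilde R_k := (1/x_1, \dots, 1/x_{k-1}, 1/x_k, \dots, 1/x_k) \in \R^m$, $1 \leq k \leq m$. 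By Lemma 11 (with $n$ replaced by $m$), this forces the $3 \times 3$ matrices (\ref{tp3a}) to be totally positive for every $1 \leq i \leq j \leq k \leq m$.

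For the ``if'' direction, given the two stated hypotheses I would set $c_{m+1} := B_{m+1} - B_m \geq 0$ to absorb the $R_\infty$-component; it then remains to represent $(B_1, \dots, B_m, B_m, \dots, B_m)$ as $\sum_{k=1}^m c_k R_k$, equivalently $(B_1, \dots, B_m)$ as a nonnegative combination of the $\widetilde R_k$. When $x_1 > \dots > x_m > 0$ this is precisely Lemma 11. When ties are present, the total-positivity hypothesis on (\ref{tp3a}) is self-enforcing: on columns $i < j$ with $x_i = x_j$ the $2 \times 2$ minors involving rows $(1,2)$ and rows $(1,3)$ yield both $B_i \leq B_j$ and $B_i/x_j \geq B_j/x_i = B_j/x_i$, i.e. $B_i \geq B_j$, so $B_i = B_j$. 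One may therefore collapse each maximal tied block in $(x_1, \dots, x_m)$ to a single coordinate (on which $B$ is automatically constant), apply Lemma 11 to the strictly decreasing collapsed system to obtain nonnegative coefficients, and lift these back to the original indexing by assigning the coefficient to any one representative of each block.

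The only step requiring genuine care is the tie-handling just described, but as indicated the total-positivity hypothesis does all the work, so no perturbation or compactness argument is needed; everything else is bookkeeping from the cone description already in hand.
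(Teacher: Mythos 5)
Your proposal is correct and follows the paper's own (very terse) approach: the paper's entire argument for Lemma 12 is the paragraph before the statement asserting $\overline{C_\Psi(x)} = \{c_1R_1 + \dots + c_mR_m + c_{m+1}R_\infty : c_i \geq 0\}$, followed by ``We deduce that,'' so your expansion --- peel off the $R_\infty$-component via $c_{m+1} := B_{m+1}-B_m$, note each $R_k$ is constant from index $m$ onward, and reduce to the $m$-coordinate statement --- is exactly the intended deduction. The one thing you add that the paper genuinely glosses over is the tie case: Lemma 11 is stated only for strictly decreasing $x_1 > \dots > x_n > 0$, whereas Lemma 12 allows $x_1 \geq \dots \geq x_m$, and your observation that the $2\times 2$ minors of (\ref{tp3a}) in rows $(1,2)$ and $(2,3)$ force $B_i = B_j$ when $x_i = x_j$ --- after which one may collapse tied blocks and lift coefficients back --- is the right way to close that gap. (Incidentally this is consistent with how the paper uses Lemma 12 in Corollary 14, where the extra hypothesis ``$B_i=B_j$ whenever $x_i=x_j$'' is supplied by the symmetry of $F_{k,r}$.)

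Two small remarks. First, in your ``only if'' paragraph you invoke Lemma 11 directly on the truncated system, which again assumes strict inequalities $x_1 > \dots > x_m$; for the tie case you should either repeat the collapse-and-lift device there, or note that the total positivity of (\ref{tp3a}) is a closed condition and pass to a strictly decreasing perturbation. Second, the sentence ``$B_i/x_j \geq B_j/x_i = B_j/x_i$'' has a typo (the repeated expression), and the relevant $2\times 2$ minor is from rows $(2,3)$, not $(1,3)$; neither affects the substance.
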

\begin{lemma}
Let $n \geq 1$ and let $F=F_{k,r}$ for some $k, r \geq 1.$
Then $\forall i \ \frac{\partial F}{\partial x_i} \geq 0$ on $\R_{+}^n$, and
\begin{equation}
\label{tp2a}
\left|
\begin{array}{cc}
 1 & 1 \\
  \frac{\partial F}{\partial x_i}(x)
 & \frac{\partial F}{\partial x_j}(x)
\end{array}
\right| \ \geq \ 0 \
\end{equation}
for all $x \in \R_{+}^n$ and $i,j$ such that $x_i \geq x_j . $ Furthermore,
\begin{equation}
\label{tp2b}
\left|
\begin{array}{cc}
  \frac{\partial F}{\partial x_i}(x)
 & \frac{\partial F}{\partial x_j}(x)\\
  \frac{1}{x_i} & \frac{1}{x_j}
\end{array}
\right| \ \geq \ 0 \ ,
\end{equation}
and
\begin{equation}
\label{tp3}
\left|
\begin{array}{ccc}
1 & 1 & 1 \\
  \frac{\partial F}{\partial x_i}(x)
 & \frac{\partial F}{\partial x_j}(x)
 & \frac{\partial F}{\partial x_k}(x)  \\
  \frac{1}{x_i} & \frac{1}{x_j} & \frac{1}{x_k}
\end{array}
\right| \ \geq \ 0
\end{equation}\\
\noindent
whenever $1 \leq i,j,k \leq n $
and $x \in \R_{+}^n$ is such that $x_i \geq x_j \geq x_k > 0 $.
{\rm [For such $x,$ conditions {\rm (\ref{tp2a}),(\ref{tp2b}),(\ref{tp3})} (together
with the remarks that $ \frac{\partial F}{\partial x_i} \geq 0 \ \forall i $
and $ \frac{1}{x_2} - \frac{1}{x_1}
 \geq \ 0$  when $x_1 \geq x_2 > 0$)
 are equivalent to the statement that the $3 \times 3$ matrix in
{\rm (\ref{tp3})} is
totally positive. If $x_i > x_j > x_k >0 ,$
condition {\rm (\ref{tp3})} can be written as
$x_k(\frac{\partial F}{\partial x_k}-\frac{\partial F}{\partial x_j})
/(x_j-x_k) \leq
x_i(\frac{\partial F}{\partial x_j}-\frac{\partial F}{\partial x_i})
/(x_i-x_j)$ .}
{\rm ]}
\end{lemma}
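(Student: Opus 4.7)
The plan is to route all four parts of Lemma~13 through the generating function $f_r(x,t)=\prod_m P_r(x_m t)$, whose Taylor coefficients in $t$ are the $F_{k,r}(x)$. The idea is that each inequality needed for $F=F_{k,r}$ (uniformly in $k$) should follow from a polynomial identity in $t$ and the $x_m$'s whose coefficients, after factoring out the appropriate ordering factors like $(x_i-x_j)$, are manifestly nonnegative; then the $t^k$-coefficient captures the statement about $F_{k,r}$. Claim~(i), that $\partial_i F_{k,r}\geq 0$, is immediate from nonnegativity of the coefficients of $F_{k,r}$, and the Schur-concavity statement (\ref{tp2a}) is already supplied by Proposition~2 of the paper.

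For (\ref{tp2b}): one finds directly $x_i\partial_i f_r - x_j\partial_j f_r = \prod_{m\ne i,j}P_r(x_m t)\cdot D(x_i t,\,x_j t)$ with $D(u,v):=uP_{r-1}(u)P_r(v)-vP_{r-1}(v)P_r(u)$. Since $D$ is antisymmetric, $(u-v)\mid D$, and the goal is to show $D/(u-v)$ has nonnegative coefficients. Writing $P_r=P_{r-1}+s^r/r!$ and using $u^{a}-v^{a}=(u-v)(u^{a-1}+\cdots+v^{a-1})$ to divide out, one obtains the explicit formula
\[
[u^c v^d]\,D/(u-v)\;=\;\begin{cases} 1/(c!\,d!) & \text{if } c+d < r,\\[2pt] 1/(c!\,d!)\;-\;1/\bigl(r!\,(c+d-r)!\bigr) & \text{if } c+d\geq r,\end{cases}
\]
(both requiring $c,d\leq r-1$, and zero otherwise). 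Nonnegativity in the second case reduces to $r!\,(c+d-r)!\geq c!\,d!$ for $c,d\in[c+d-r+1,\,r-1]$, which follows from the unimodality of the binomial coefficients $\binom{c+d}{\cdot}$: under that constraint every admissible $c$ is strictly closer to the peak $(c+d)/2$ than $r$ is, so $\binom{c+d}{c}\geq\binom{c+d}{r}$. Reading off the $t^k$-coefficient then gives (\ref{tp2b}) for every $F_{k,r}$.

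For (\ref{tp3}): expanding the $3\times 3$ determinant along the top row yields
\[
x_i x_j x_k\, t\, \det\nolimits_{f_r}(x,t) \;=\; \prod_{m\ne i,j,k}P_r(x_m t)\cdot T(x_i t,\,x_j t,\,x_k t),
\]
where $T(u,v,w):=\sum_{\text{cyc}} u(w-v)P_{r-1}(u)P_r(v)P_r(w)$. Full antisymmetry of $T$ implies the Vandermonde $V:=(u-v)(v-w)(u-w)$ divides $T$, and it suffices to show $T/V$ has nonnegative coefficients. Expanding each $P_r=P_{r-1}+s^r/r!$ and collecting by the number of $s^r/r!$ factors, the pure-$P_{r-1}$ contribution vanishes because $\sum_{\text{cyc}}u(w-v)=0$. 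What survives is $T=T^{(1)}+T^{(2)}$ with
\[
T^{(1)} \;=\; \tfrac{1}{r!}\sum_{\text{cyc}} u^{r+1}P_{r-1}(v)P_{r-1}(w)(v-w),\qquad T^{(2)} \;=\; \tfrac{1}{(r!)^2}\sum_{\text{cyc}} u(vw)^r(w-v)P_{r-1}(u),
\]
each separately antisymmetric and hence divisible by $V$. The problem reduces to verifying that both $T^{(1)}/V$ and $T^{(2)}/V$ have nonnegative coefficients.

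This last verification is the main obstacle. The two-variable bookkeeping of (\ref{tp2b}) can in principle be iterated---peel off one Vandermonde factor at a time using identities of the form $u^{a}P_{r-1}(v)-v^{a}P_{r-1}(u)=(u-v)\cdot(\text{nonneg})$, then divide the remaining antisymmetric 2-variable polynomial by the leftover Vandermonde factor as in claim (iii)---but the three-variable combinatorics is noticeably heavier, and I expect pinning down explicit nonnegative coefficient formulas for $T^{(1)}/V$ and $T^{(2)}/V$ to be the technical heart of the argument. A softer alternative would be to prove concavity of $\phi(s):=sP_{r-1}(s)/P_r(s)$ on $[0,\infty)$ (equivalent to $\mu_3\leq\mathrm{Var}$ for the truncated-at-$r$ Poisson distribution); that gives $\det_{f_r}(x,t)\geq 0$ pointwise in $t\geq 0$, but falls short of the coefficient-wise statement needed to recover every individual $F_{k,r}$. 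I therefore favor pushing the polynomial-identity route, with the bulk of the work concentrated in the combinatorial analysis of $T^{(1)}/V$ and $T^{(2)}/V$.
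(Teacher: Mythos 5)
Your handling of the easy parts is fine: $\partial_i F \geq 0$ is trivial, (\ref{tp2a}) is Proposition~2, and your explicit formula for the coefficients of $D/(u-v)$ together with the binomial-unimodality argument is a correct (though more intricate) route to (\ref{tp2b}) than the paper's, which instead reduces by linearity to the two-variable monomial $x^ay^b+x^by^a$ and gets the determinant $(a-b)(x^ay^b-x^by^a)\geq 0$ in one line, valid for any symmetric polynomial with nonnegative coefficients. The decomposition $T=T^{(1)}+T^{(2)}$ with the pure-$P_{r-1}$ part vanishing is also algebraically correct.

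The genuine gap is exactly the one you flag yourself: you never prove that $T^{(1)}/V$ and $T^{(2)}/V$ have nonnegative coefficients, and that is the entire content of (\ref{tp3}), which is what the lemma is really about. Your proposed workarounds (iterating the two-variable division, or proving concavity of $sP_{r-1}(s)/P_r(s)$) are either left undone or, as you note, give only pointwise and not coefficientwise positivity. The paper sidesteps the Vandermonde-quotient bookkeeping entirely: after using $P_r=P_{r-1}+s^r/r!$ and $P_r'=P_{r-1}$ to clean up the first row, it writes the remaining $3\times3$ determinant as $\det$ of three rows, one a single scaled power vector $\frac{1}{(r+1)!}(x^{r+1},y^{r+1},z^{r+1})t^r$ and two that are $t$-sums of such vectors, then expands by multilinearity, pairs the $a<b$ and $a>b$ terms, and lands on $\sum_{r\geq a>b\geq 0}(r+1)(a-b)\,t^{r+a+b}$ times a generalized Vandermonde determinant. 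Each such determinant is $\geq 0$ when $x\geq y\geq z\geq 0$ because $s^p$ is a totally positive kernel, so the $t^k$-coefficient is a sum of nonnegative terms and you are done --- no division by $V$ and no combinatorial identity for the quotient is ever needed. That reorganization is the missing step in your proposal.
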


\begin{proof}
\noindent It is clear that $\forall i \ \frac{\partial F}{\partial x_i} \geq 0$ on $\R_{+}^n$.
Condition (\ref{tp2a}) is the Schur-concavity condition,
which was already discussed for the more general class of polynomials (\ref{HS}).
 We proceed to condition (\ref{tp2b}). This actually
holds for any symmetric polynomial $F$ with positive coefficients.
In fact, by linearity, it will suffice to prove (\ref{tp2b}) when
$n=2, \ (x_i,x_j)=(x_1,x_2):=:(x,y)$ with $x \geq y \geq 0, $ and
$$
F:= F(x,y) := x^ay^b + x^by^a
$$
for some integers $a,b \geq 0.$ We get
$$
\left|
\begin{array}{cc}
 x \frac{\partial F}{\partial x}
 & y\frac{\partial F}{\partial y}\\
  1 & 1
\end{array}
\right|
= (a-b)(x^ay^b - x^by^a) \geq 0.
$$
The latter inequality is easy to verify, and is a special
case of the fact that the kernel $K(s,p)=s^p$ is totally positive
on $\R_+ \times \R$ \cite[Example 18.A.6.a]{MO}.
We now prove (\ref{tp3}) for $F=F_{k,r}.$
By symmetry and linearity, it suffices to prove (\ref{tp3}) when
$n=3, \ (x_i,x_j,x_k)=(x_1,x_2,x_3):=:(x,y,z)$ with $x \geq y \geq z > 0. $
Recalling the generating function (\ref{gen1}) $f_r(x,y,z,t)
= \sum_k F_{k,r}(x,y,z)t^k$ , it is clear that the determinant in (\ref{tp3})
is the coefficient of $t^k$ in
$$
\frac{1}{xyz}
\left|
\begin{array}{ccc}
x & y & z \\
 x \frac{\partial f_r}{\partial x}
 & y\frac{\partial f_r}{\partial y}
 & z\frac{\partial f_r}{\partial z}  \\
  1 & 1& 1
\end{array}
\right| \ .
$$
By definition, $ f_r(x,y,z,t) = P_r(xt)P_r(yt)P_r(zt)$ where $P_r$
is the polynomial in one variable given by
$$P_r(s) = \sum_{m=0}^r \ \frac{s^m}{m!} \ .$$
The $P_r$ have the nice properties
$$
P_r(s) = P_{r-1}(s) + \frac{s^r}{r!} \ , \ \  {\rm and}
\ \ \ \frac{d}{ds}P_r(s) = P_{r-1}(s) \ , \ \ \ \ \ \ \ \ (r \geq 0)
$$
where $P_{-1} = 0$ for convenience.
It follows that
$$
\left|
\begin{array}{ccc}
x & y & z \\
 x \frac{\partial f_r}{\partial x}
 & y\frac{\partial f_r}{\partial y}
 & z\frac{\partial f_r}{\partial z}  \\
  1 & 1& 1
\end{array}
\right|
=
\left|
\begin{array}{ccc}
xP_r(xt) & yP_r(yt) & zP_r(zt) \\
 x \frac{\partial }{\partial x}P_r(xt)
 & y\frac{\partial }{\partial y}P_r(yt)
 & z\frac{\partial }{\partial z}P_r(zt)  \\
  P_r(xt) & P_r(yt)& P_r(zt)
\end{array}
\right|
$$
$$
=
\left|
\begin{array}{ccc}
x(P_{r-1}(xt)+ \frac{(xt)^r}{r!})
& y(P_{r-1}(yt)+ \frac{(yt)^r}{r!})
& z(P_{r-1}(zt)+ \frac{(zt)^r}{r!}) \\
 x P_{r-1}(xt)t
 & y P_{r-1}(yt)t
 & z P_{r-1}(zt)t  \\
  P_r(xt) & P_r(yt)& P_r(zt)
\end{array}
\right|
$$
$$
=
\left|
\begin{array}{ccc}
 \frac{x^{r+1}t^r}{r!}
& \frac{y^{r+1}t^r}{r!}
& \frac{z^{r+1}t^r}{r!} \\
 x P_{r-1}(xt)t
 & y P_{r-1}(yt)t
 & z P_{r-1}(zt)t  \\
  P_r(xt) & P_r(yt)& P_r(zt)
\end{array}
\right|
 \ .
$$

Introducing the row vectors
$\overrightarrow{V_m} := \frac{1}{m!}[x^m \ , \ y^m \ , \ z^m]$
we may write the latter determinant as
$$
\left|
\begin{array}{ccc}
 \frac{x^{r+1}t^r}{r!}
& \frac{y^{r+1}t^r}{r!}
& \frac{z^{r+1}t^r}{r!} \\
 x P_{r-1}(xt)t
 & y P_{r-1}(yt)t
 & z P_{r-1}(zt)t  \\
  P_r(xt) & P_r(yt)& P_r(zt)
\end{array}
\right|
\ = \
\left|
\begin{array}{c}
 (r+1)\overrightarrow{V_{r+1}}t^{r} \\
 \sum_{a=1}^r \ a\overrightarrow{V_a}t^a  \\
 \sum_{b=0}^r \ \overrightarrow{V_b}t^b \\
\end{array}
\right| \ =
$$\\
$$= \
\sum_{a=1}^r
\left|
\begin{array}{c}
 (r+1)\overrightarrow{V_{(r+1)}}t^{r} \\
  a\overrightarrow{V_a}t^a  \\
  \overrightarrow{V_0}t^0 \\
\end{array}
\right| \ +  \bigg( \
\sum_{r\geq a>b \geq 1}
\left|
\begin{array}{c}
 (r+1)\overrightarrow{V_{(r+1)}}t^{r} \\
  a\overrightarrow{V_a}t^a  \\
  \overrightarrow{V_b}t^b \\
\end{array}
\right| \ + \
\sum_{r\geq b>a \geq 1}
\left|
\begin{array}{c}
 (r+1)\overrightarrow{V_{(r+1)}}t^{r} \\
  a\overrightarrow{V_a}t^a  \\
  \overrightarrow{V_b}t^b \\
\end{array}
\right| \ \bigg)
$$ \\
$$= \
\sum_{a=1}^r
\left|
\begin{array}{c}
 (r+1)\overrightarrow{V_{(r+1)}}t^{r} \\
  a\overrightarrow{V_a}t^a  \\
  \overrightarrow{V_0}t^0 \\
\end{array}
\right| \ +  \bigg( \
\sum_{r\geq a>b \geq 1}
\left|
\begin{array}{c}
 (r+1)\overrightarrow{V_{(r+1)}}t^{r} \\
  a\overrightarrow{V_a}t^a  \\
  \overrightarrow{V_b}t^b \\
\end{array}
\right| \ + \
\sum_{r\geq a>b \geq 1}
\left|
\begin{array}{c}
 (r+1)\overrightarrow{V_{(r+1)}}t^{r} \\
  b\overrightarrow{V_b}t^b  \\
  \overrightarrow{V_a}t^a \\
\end{array}
\right| \ \bigg)
$$ \\
$$=\sum_{a=1}^r
\left|
\begin{array}{c}
 \overrightarrow{V_{(r+1)}} \\
 \overrightarrow{V_a} \\
  \overrightarrow{V_0} \\
\end{array}
\right|(r+1)at^{r+a}
 +
\sum_{r\geq a>b \geq 1}
\left|
\begin{array}{c}
 \overrightarrow{V_{(r+1)}} \\
 \overrightarrow{V_a} \\
  \overrightarrow{V_b} \\
\end{array}
\right|(r+1)(a-b)t^{r+a+b} $$\\
$$ =
\ \sum_{r\geq a>b \geq 0}
\left|
\begin{array}{c}
 \overrightarrow{V_{(r+1)}} \\
 \overrightarrow{V_a} \\
  \overrightarrow{V_b} \\
\end{array}
\right|(r+1)(a-b)t^{r+a+b}
\ .
$$ \\
\noindent
In the latter, the coefficient of each $t^{r+a+b}$
is nonnegative,
since $r+1 > a > b \geq 0$, $x\geq y \geq z \geq 0$,
and since the kernel $K(s,p)=s^p$ is totally positive.
Hence, after collecting the like powers among the $t^{r+a+b},$
the final coefficient of each $t^k$ will be nonnegative also.
[Remark: The determinants of the $\overrightarrow{V_m}$'s
are known as generalized Vandermonde determinants,
if we ignore the scalar factors $\frac{1}{m!}.$]
\end{proof}

\noindent {\bf Exercise. } Consider $F=H_S$ where $S \subset I_k$ is
a Schur-concave index set (see (\ref{HS})). Show that: {\bf (a) }  There exist $n,k$ and $S$
such that $F$ does not satisfy (\ref{tp3}) of Lemma 13.
{\bf (b) } However, if $p_i \leq 2 $ for all $i$ and all $p \in S$, then
$F$ does satisfy (\ref{tp3}).
{\bf (c) } Hence there exists an $F$ of the form $F=H_S$ which is not of the form
$F=F_{k,r}$ but satisfies (\ref{tp3}) and all of
the other conditions  of Lemma 13.

\begin{corollary} Let
$ \Psi_\lambda(x) := \sum_{i=1}^n \min(x_i,\lambda) + \lambda \log_+(x_i/\lambda),
\ x \in \R^n, \ \lambda > 0$ {\rm [where $\log_+(s):= 0 $ for all $s \in (-\infty,1]$
and $\log_+(s):= \log s $ for all $s \geq 1$].} If $x \in \R_{+}^n$ then
\begin{equation}
\label{grad}
\nabla F_{k,r}(x) \ \in \ \overline{C_\Psi(x)}
\end{equation}
for all $k,r \geq 1,$ where
$\overline{C_\Psi(x)}$ is the positive cone generated by
$\{\nabla \Psi_{\lambda}(x)\}_{ \lambda > 0 }$ {\rm [see (\ref{cone})]}.
\end{corollary}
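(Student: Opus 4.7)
The plan is to match Lemma 13's conclusions directly against the cone-membership criteria supplied by Lemmas 11 and 12. The heavy analytic work, namely the generalized-Vandermonde expansion in the proof of Lemma 13, has already been done; what remains is essentially a bookkeeping exercise, together with some care to handle ties and vanishing coordinates.

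First I would reduce to the case $x \in \mathcal{D}_n$. Since each $F_{k,r}$ and each $\Psi_\lambda$ is invariant under permutations of coordinates, permuting the entries of $x$ by a $\sigma$ that sorts them into decreasing order transforms $\nabla F_{k,r}(x)$ and the generators $\{\nabla \Psi_\lambda(x)\}_{\lambda > 0}$ by the same $\sigma$, so the condition (\ref{grad}) is preserved.

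In the generic case $x_1 > x_2 > \cdots > x_n > 0$, Lemma 11 asserts that $B := \nabla F_{k,r}(x)$ lies in $\overline{C_\Psi(x)}$ if and only if the matrix (\ref{tp3a}) is totally positive for every $1 \le i \le j \le k \le n$. All nine minors match conditions already established in Lemma 13: the $1 \times 1$ minors are nonnegative because the entries $B_\ell = \partial F_{k,r}/\partial x_\ell$ and $1/x_\ell$ are; among the $2 \times 2$ minors, those from rows $\{1,2\}$ give (\ref{tp2a}), those from rows $\{2,3\}$ give (\ref{tp2b}), and those from rows $\{1,3\}$ hold because $x \in \mathcal{D}_n$ forces $1/x_j \ge 1/x_i$; and the $3 \times 3$ minor is exactly (\ref{tp3}). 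Thus Lemma 13 delivers (\ref{grad}) in this case.

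For boundary cases with ties or vanishing entries, say $x_1 \ge \cdots \ge x_m > 0 = x_{m+1} = \cdots = x_n$ with $1 \le m \le n$, I would apply Lemma 12 in place of Lemma 11. Symmetry of $F_{k,r}$ in the last $n-m$ coordinates forces $B_{m+1} = \cdots = B_n$; the inequality $B_m \le B_{m+1}$ is (\ref{tp2a}) applied to the pair $(m, m+1)$; and the total positivity of (\ref{tp3a}) for $1 \le i \le j \le k \le m$ follows from Lemma 13 restricted to the positive coordinates of $x$. The degenerate case $x = 0$ is handled by noting that $\nabla F_{1,r}(0) = (1, \ldots, 1) = \nabla \Psi_\lambda(0)$ while $\nabla F_{k,r}(0) = 0$ for $k \ge 2$, so $\nabla F_{k,r}(0) \in \overline{C_\Psi(0)}$ either way. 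There is no genuine obstacle in the corollary itself; the only subtle point is the uniform handling of the boundary stratification of $\mathcal{D}_n$, and Lemma 12 supplies precisely the statement needed to do this without a limiting argument.
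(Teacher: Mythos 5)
Your proposal is correct and follows essentially the same route as the paper: reduce to $\mathcal{D}_n$ by symmetry, then match the total-positivity conclusions of Lemma 13 against the cone-membership criteria of Lemmas 11--12. The paper's proof is more compact, invoking Lemma 12 alone (which already covers the generic case $m=n$ as well as the boundary strata) and noting that symmetry forces $B_i=B_j$ when $x_i=x_j$; your explicit case split and the separate treatment of $x=0$ (which Lemma 12 excludes, though trivially true) are slightly more detailed but not a different argument.
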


\begin{proof} Since both the $\Psi_\lambda(x)$ and the $F_{k,r}(x)$
are symmetric functions, it suffices to consider $x$ such that
$x_1 \geq x_2 \geq \dots \geq x_n \geq 0.$ Let
$B:= \nabla F_{k,r}(x)$.
By the symmetry of $F_{k,r}$
we have $B_i=B_j$ whenever $x_i=x_j.$ This together with Lemma 13
shows that $B$ satisfies all the conditions
of Lemma 12, thus giving (\ref{grad}).
\end{proof}

Corollary 14 and Lemma 10 yield
the following partial converse of Theorem 9:
\begin{theorem}
 Let $x,y \in \R_{+}^n$
be two points such that $x \prec_L y.$
 Suppose there exists
a $C^1$ path $\gamma : [0,1] \to \R_{+}^n$ with
$\gamma(0) = x, \ \gamma(1) = y $ and such that
$0\leq s \leq t \leq 1 \Rightarrow  \gamma(s)\prec_L \gamma(t).$
Then $x \prec_F y.$
\end{theorem}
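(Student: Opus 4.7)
The plan is to apply Lemma 10 directly, with the index set $\Lambda = (0,\infty)$, the family $\Phi = \{\Psi_\lambda\}_{\lambda > 0}$ defined by (\ref{psib}), and the target function $F = F_{k,r}$ for each fixed pair $k, r \geq 1$. Once I verify the three hypotheses of Lemma 10 in this setting, the conclusion $F_{k,r}(x) \leq F_{k,r}(y)$ follows for every $k,r$, and this is exactly the definition of $x \prec_F y$.

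For the hypotheses: condition (a) of Lemma 10 is the endpoint condition on $\gamma$, which is part of the assumption of Theorem 15. Condition (b) asks that $\Psi_\lambda(\gamma(s)) \leq \Psi_\lambda(\gamma(t))$ for $0 \leq s \leq t \leq 1$ and all $\lambda$; but by the characterization of $\prec_L$ via (\ref{psib}), the assumption $\gamma(s) \prec_L \gamma(t)$ is literally this condition. For condition (c), I need $\nabla F_{k,r}(p) \in \overline{C_\Psi(p)}$ at every $p \in \gamma([0,1]) \subset \R_+^n$, and this is precisely the content of Corollary 14. The $C^1$ regularity of $\Psi_\lambda$ on $\R^n$ (using the extension $\log_+$ convention from the discussion preceding Lemma 12) and of $F_{k,r}$ (a polynomial) is also in place, so Lemma 10 applies.

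Putting the three pieces together: Lemma 10 gives $F_{k,r}(x) = F_{k,r}(\gamma(0)) \leq F_{k,r}(\gamma(1)) = F_{k,r}(y)$. Since $k,r \geq 1$ are arbitrary, Definition 2 of (\ref{F}) yields $x \prec_F y$, completing the proof.

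There is essentially no main obstacle to this particular theorem: all the hard work has been pushed into Corollary 14 (whose proof required Lemmas 11, 12, 13 together with the totally-positive kernel/generalized Vandermonde computations) and into Lemma 10 itself. The real conceptual obstacle, as emphasized in the paper's discussion, is not the proof of Theorem 15 but rather the existence of such a monotone path $\gamma$ for arbitrary $x \prec_L y$; this is the content of the path-connectedness Conjecture 16, and it is what prevents the local result from immediately globalizing to the full converse $x \prec_L y \Rightarrow x \prec_F y$.
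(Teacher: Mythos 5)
Your proposal is correct and follows exactly the paper's own proof: both apply Lemma 10 with $\Phi_\lambda := \Psi_\lambda$ from (\ref{psib}), obtain hypotheses (a) and (b) directly from the assumed monotone path, and invoke Corollary 14 for hypothesis (c) at each point of $\gamma([0,1]) \subset \R_{+}^n$. Your closing commentary on where the genuine difficulty lies (the existence of $\gamma$, i.e.\ Conjecture 16) also matches the paper's discussion.
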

\begin{proof} By hypothesis, we have a path
$\gamma$ with (a) and (b) of Lemma 10 for $\Phi_\lambda := \Psi_\lambda$
(see (\ref{psib})). Part (c) of Lemma 10
holds for each $F=F_{k,r}$  by Corollary 14, since
the path $\gamma$ is in $ \R_{+}^n.$ Thus Lemma 10 implies
that $F_{k,r}(x) \leq F_{k,r}(y) $ for all $k,r \geq 1.$
\end{proof}

It seems intuitively plausible that the path $\gamma$ in the hypothesis
of Theorem 15 does exist when both $x$ and $y$
are in $\mathcal{D}_n :=
 \{x\in \R^n \ | \ x_1 \geq \dots \geq x_n \geq 0\}$. (If so, then Theorem 15 would be a
complete converse of Theorem 9, thus establishing the equivalence
of the relations $\prec_L$ and $\prec_F$.)
We propose the following specific conjecture on the existence of $\gamma$,
in the more general context of Lemma 10.
\begin{conjecture}
Let $\Lambda$ be any index set, totally ordered by some order $\ll$, and consider
any real-valued functions
$\{\Phi_\lambda\}_{\lambda \in \Lambda}$ on $\mathcal{D}_n$ which
are $C^1$ on $\mathcal{D}_n$  for each fixed $\lambda.$
Suppose that the gradient ``matrix"
$\{\nabla \Phi_\lambda\}_{\lambda \in \Lambda}$ is totally positive on $\mathcal{D}_n$
{\rm (}i.e. $\det [\partial \Phi_{\lambda_i} / \partial x_{j_k}] \geq 0$
on $\mathcal{D}_n$ whenever $\{\lambda_1 \ll \dots \ll \lambda_r \} \subset \Lambda$,
$1\leq j_1 \leq \dots \leq j_r \leq n$,
and $1 \leq r \leq n${\rm ).}
Let $x,y \in \mathcal{D}_n$ be such that $\Phi_\lambda(x) \leq
\Phi_\lambda(y) \ \forall \lambda \in \Lambda .$ Then there is a
$C^1$ path $\gamma : [0,1] \to \mathcal{D}_n$ such that
{\bf (a)} $\gamma(0) = x, \ \gamma(1) = y $, and
{\bf (b)} $0\leq s \leq t \leq 1 \Rightarrow \forall \lambda \in \Lambda,
\ \Phi_\lambda(\gamma(s)) \leq \Phi_\lambda(\gamma(t)) . $
\end{conjecture}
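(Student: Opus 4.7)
The plan is to construct $\gamma$ as the integral curve of a continuous ``improving-direction'' vector field $v(p)$ defined on the feasible set
\[ R := \{z \in \mathcal{D}_n : \Phi_\lambda(x) \leq \Phi_\lambda(z) \leq \Phi_\lambda(y)\ \forall \lambda \in \Lambda\}, \]
with total positivity supplying both a nonempty improving cone at each point of $R$ and the non-degeneracy needed to drive $\gamma$ to $y$. First I would reduce to finite $\Lambda = \{\lambda_1 \ll \dots \ll \lambda_m\}$ by an exhaustion $\Lambda_k \uparrow \Lambda$, producing paths $\gamma_k$ with Lipschitz constant at most $\|y-x\|$ (automatic from the construction below), and applying Arzel\`a--Ascoli followed by mollification. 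For finite $\Lambda$, at each $p \in R$ define the \textbf{improving cone} $D_\Phi(p) := \{v \in \R^n : \nabla \Phi_\lambda(p) \cdot v \geq 0 \ \forall \lambda \in \Lambda\}$. The $r = 1$ case of the total positivity hypothesis gives $\partial \Phi_\lambda / \partial x_j \geq 0$, whence $\R_+^n \subset D_\Phi(p)$; in particular $D_\Phi(p)$ is a full-dimensional polyhedral cone.

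Let $v(p)$ be the orthogonal projection of $y - p$ onto $D_\Phi(p) \cap T_p \mathcal{D}_n$, where $T_p \mathcal{D}_n$ is the tangent cone at $p$. Then $v$ depends continuously on $p$ (continuity of projection onto polyhedral cones whose facet normals vary continuously), satisfies $\|v(p)\| \leq \|y - p\|$, and by Moreau's decomposition vanishes precisely when $y - p$ lies in the polar cone $-\overline{C_\Phi(p)}$. The ODE $\dot\gamma = v(\gamma)$, $\gamma(0) = x$, then produces a $C^1$ monotone curve; after reparametrization it satisfies $\gamma(1) = y$. The key nontrivial fact is that $v(p) \neq 0$ whenever $p \in R \setminus \{y\}$. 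When the $\Phi_\lambda$ are concave (as for the motivating application $\Phi_\lambda = \Psi_\lambda$, each $\psi_\lambda$ being concave) this is immediate: any representation $y - p = -\sum c_i \nabla \Phi_{\lambda_i}(p)$ with $c_i \geq 0$ not all zero would yield $\sum_j c_j \nabla \Phi_{\lambda_j}(p) \cdot (y - p) = -\|\sum_i c_i \nabla \Phi_{\lambda_i}(p)\|^2 < 0$, while concavity gives $\nabla \Phi_{\lambda_j}(p) \cdot (y - p) \geq \Phi_{\lambda_j}(y) - \Phi_{\lambda_j}(p) \geq 0$, a contradiction. Without concavity I would attempt the same conclusion from the integral identity $\Phi_\lambda(y) - \Phi_\lambda(p) = \int_0^1 \nabla \Phi_\lambda((1-s)p + sy) \cdot (y - p) \, ds$ via a Binet--Cauchy expansion exploiting the total positivity of the gradient matrix along the segment from $p$ to $y$, in the spirit of the determinantal manipulations in the proof of Lemma 13.

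The main obstacle is precisely this last, non-concave step: it is not clear that the total positivity hypothesis alone, unaided by convexity or concavity of the $\Phi_\lambda$, yields an infinitesimal obstruction from the global inequalities $\Phi_\lambda(y) \geq \Phi_\lambda(p)$. The conjecture may therefore require strengthening with a concavity hypothesis on the $\Phi_\lambda$, which is all that is needed for the intended application $x \prec_L y \Rightarrow x \prec_F y$. Secondary obstacles are: enforcing $v(p) \in T_p \mathcal{D}_n$ at boundary points of $\mathcal{D}_n$ without destroying monotonicity of any $\Phi_\lambda$ (total positivity should provide sufficient slack, since the boundary-defining gradients $e_i - e_{i+1}$ are not in $-\overline{C_\Phi(p)}$); verifying that $\gamma$ reaches $y$ in finite reparametrized time rather than asymptoting to a proper subset of $R$; and obtaining control uniform in $\Lambda_k$ for the reduction from finite to infinite $\Lambda$. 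The continuation techniques of \cite{Ger}, combining total positivity with topological continuation, appear to be the natural tools for each of these steps.
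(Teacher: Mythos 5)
This statement is Conjecture 16 of the paper, and the paper does not prove it: it is explicitly posed as an open conjecture, with a remark that the continuation methods of Gerver and Kudryavtseva may be applicable and with Proposition 17 establishing only a very special $n=3$ case on a level set of $\Phi_1$. So there is no paper proof for your attempt to be compared against; what you have written is a proposal for an open problem, and I can only evaluate it on its own terms.

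Your gradient-flow outline is sensible and you are honest about where it is incomplete, but let me be concrete about the gaps. The central step -- nonvanishing of the projected field $v(p)$ for $p \in R \setminus \{y\}$ -- is argued in the concave case by writing $y - p = -\sum_i c_i \nabla\Phi_{\lambda_i}(p)$ and deriving a sign contradiction. But by Moreau's theorem the polar of $D_\Phi(p) \cap T_p\mathcal{D}_n$ is $\overline{ -\overline{C_\Phi(p)} + N_p\mathcal{D}_n }$, not $-\overline{C_\Phi(p)}$ alone; at a boundary point of $\mathcal{D}_n$ you must also allow a normal-cone term $w \in N_p\mathcal{D}_n$, i.e. $y - p = -g + w$ with $g \in \overline{C_\Phi(p)}$, and then $g\cdot(y-p) = -\|g\|^2 + g\cdot w$ no longer yields a contradiction with $g\cdot(y-p)\ge 0$. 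Your argument is therefore only an interior argument, and the relation $\prec_L$ has much of its subtlety precisely on the boundary strata of $\mathcal{D}_n$ (where coordinates coincide or vanish); in the motivating case the gradients $\nabla\Psi_\lambda$ degenerate there as in Lemma 12. Note also that your concave-case argument uses only the $r=1$ part of the total positivity hypothesis (nonnegativity of the partials) together with concavity; the full determinantal hypothesis plays no role in it, which is a sign that the argument is not yet engaging with the content of the conjecture and is likely why you cannot close the non-concave case. Finally, two analytic points are asserted rather than proved: continuity of $p\mapsto v(p)$ fails in general when the active constraints of $D_\Phi(p)$ or of $T_p\mathcal{D}_n$ change (the tangent cone jumps at $\partial\mathcal{D}_n$), so the ODE step needs Filippov-type solutions or a smoothing; and nonvanishing of $v$ does not by itself force the flow to reach $y$ -- you have $\frac{d}{dt}\|y-\gamma(t)\|^2 = -2\|v(\gamma(t))\|^2 \le 0$, but to conclude $\|y-\gamma(t)\|\to 0$ you need $\|v\|$ bounded below on compact subsets of $R\setminus\{y\}$, which again requires the continuity you have not established. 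These are exactly the sort of issues the paper flags as making the conjecture nontrivial even in its simplest nonconvex instances.
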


The following examples are possible cases of Conjecture 16. The existence of
the path $\gamma$ appears to be a non-trivial problem in all of them except
1b and 2. The methods of \cite{Ger} may be applicable to this problem.
In Example 3, both the existence of $\gamma$ and the total positivity
hypothesis of Conjecture 16 seem to be non-trivial to verify.

{\bf Example 1.} $\Phi_{\lambda}(x) = \sum_{i=1}^n \phi_\lambda(x_i)$ for some
family of functions $\{\phi_\lambda\}_{\lambda \in \Lambda}$.
Here the hypothesis of total positivity of the gradient matrix
$\{\nabla \Phi_\lambda\}_{\lambda \in \Lambda}$
specializes to requiring that $\partial \phi_\lambda(s)/{\partial s}=:
K(s,\lambda)$ is a totally positive kernel on $[0,\infty) \times \Lambda$
(with the decreasing order on $[0,\infty)$ and the given order $\ll$ on
$\Lambda$).
Specific cases are:

{\bf (1a.)} $ \Lambda = (0,\infty)$ (ordered by
$\lambda_1 \ll \lambda_2 \Leftrightarrow \lambda_1 \geq \lambda_2 $)
with $\phi_\lambda(s)=\psi_\lambda(s)
:= \min(s,\lambda) + \lambda \log_+(s/\lambda)$ as in (\ref{psib}).
(This is the case of Conjecture 16 needed in Theorem 15.)
The kernel is $K(s,\lambda) =
\partial \psi_\lambda(s)/{\partial s} = \min(s,\lambda)/s$. Its
total positivity on $[0, \infty) \times (0, \infty)$ is easy to verify
-- see the proof of Lemma 12  or \cite[Theorem 18.A.4, Examples 18.A.7, 18.A.7a]{MO}.

{\bf (1b.)} $ \Lambda = (0,\infty)$ with $\phi_\lambda(s)= \min(s,\lambda)$.
Here the $C^1$ requirement is not quite satisfied. Nevertheless, the
kernel $K(s,\lambda) =
\partial \phi_\lambda(s)/{\partial s} = {\bf 1}_{\{s<\lambda\}}$ is totally
positive, and the relation induced by the $\Phi_{\lambda}$ is $\prec^w$ (see \S1).
Moreover, the existence of the path $\gamma$ is known in this example; just
take the straight line segment from $x$ to $y$. (This is easier to
verify in Example 2 below, which induces the same relation).

{\bf (1c.)} The ``power sums"
$\Phi_{\lambda}(x):= \sum_i x_i^\lambda,$
given by the choice $\phi_\lambda(s) = s^\lambda, \ \lambda \in
\Lambda$ where $\Lambda \subset \R $.
The kernel $K(s,\lambda) =\partial \phi_\lambda(s)/{\partial s}= \lambda s^{\lambda-1}$
is totally positive on $[0,\infty) \times \R$ \cite[Example 18.A.6.a]{MO}.

{\bf Example 2.} $\Lambda = \{1,\dots,n\}$ with $\Phi_\lambda(x) =
x_\lambda+x_{\lambda+1} + \dots + x_n$. The relation induced by these
$\Phi_\lambda$ is again $\prec^w$, as in (1b) above, but this is formally a distinct
case of Conjecture 16. We can clearly use a line segment for the path $\gamma$.
(In fact, the image $\Phi(\mathcal{D}_n)$ is convex, so
the results of Marshall et al.
\cite{MWW} suffice for a complete discussion of order
preservation - see Remark (10.2) after Lemma 10.)

{\bf Example 3.} $\Lambda = \{1,\dots,n\}$ with
 the elementary
symmetric polynomials $\Phi_\lambda(x):= E_\lambda(x)$ (see \S2.3).
(Note that $\Phi(\mathcal{D}_n)$ is not convex in general, so that
the results of \cite{MWW} do not apply directly.)
 A natural
conjecture is that the gradients $\nabla E_i(x)$ listed
in the order
$ i = 1,\dots,n$
form a totally positive matrix for all $x \in \mathcal{D}_n.$
More generally, for fixed $r \in \N$  take
$\Phi_k(x):= F_{k,r}(x), \ k \in \Lambda := \{r,r+1,\dots, nr\} $
and consider the $n\times n$
matrix $M(x)$ of any $n$ gradients $\nabla F_{k_1,r}(x), \dots, \nabla F_{k_n,r}(x)$
in the order $k_1 \leq \dots \leq k_n .$ Is $M(x)$
totally positive for all $x \in \mathcal{D}_n$?  (Note that  the
$3\times 3$ matrix (\ref{tp3}) of Lemma 13 is essentially
the special case $n=3,\ k_1 = r,\ k_2 = k,\ k_3 = 3r.$
The proof of Lemma 13 may generalize by
considering the matrix of gradients of  the generating functions,
$\nabla f_r(x,t_1),\dots, \nabla f_r(x,t_n)$.)\\

\noindent {\Large {\bf 7.
 The relations $\prec_L  $ and $ \prec_F  $ when $n\leq 3$.} }

When $n=1$, it is clear that if $x,y \in  \R_{+}^n$ then each one of
the relations $x\prec_L y $ and $x \prec_F  y$ is equivalent
to the usual order $x \leq y$. When $n=2$, it can be shown that
 each of $x\prec_L y $ and $x \prec_F  y$ is
 equivalent to the two inequalities $x_1x_2 \leq y_1y_2,
\ x_1+x_2 \leq y_1+y_2$ ; we leave this as an exercise.
When $n\geq 3$, we suspect that there is no simple, finite set
of inequalities equivalent to either $x\prec_L y $ or $x \prec_F  y$.
(Similar remarks have been made in \cite[p. 44, Open Problem 6.5]{N}
 regarding the ``trumping relation" (\S8).)
As seen in the proof of Lemma 11, at each fixed point $x \in
\R_{+}^n$, the positive cone of the gradients $\nabla \Psi_{\lambda}(x),  \lambda > 0$
is generated by just $n$ of these gradients. However, the choice
of these $n$ gradients varies with $x$ in a way which
does not seem to be ``integrable" to yield $n$ simple inequalities.
 In the remainder of this section
 we will discuss the $n=3$ case under the extra condition
 $x_1+x_2+x_3=y_1+y_2+y_3$. This condition effectively brings us back
 down to $2$ dimensions and enables us to find
 a finite set of (two) inequalities characterizing $x\prec_L y $ and $x \prec_F  y$,
 as in the case $n=2$.

Consider the family $\{\Phi_\lambda\}_{\lambda \in \Lambda}$ on $\mathcal{D}_3
= \{x\in \R^3 \ | \ x_1\geq x_2 \geq x_3 \geq 0\}$ given by
$\Lambda := \{1,2,3\}$ with
$ \Phi_1(x) = x_1+x_2+x_3, \ \  \Phi_2(x) = x_2+x_3, \ \  \Phi_3(x) = x_1x_2x_3.$
The gradients are
$$ \nabla \Phi_1(x) = (1,1,1), \ \ \nabla \Phi_2(x) = (0,1,1), \ \
\nabla \Phi_3(x) = (x_2x_3,x_1x_3,x_1x_2). $$
The $3 \times 3$ matrix formed by these gradients,
$$M(x):=\left[
\begin{array}{ccc}
1 & 1 & 1 \\
0 & 1 & 1 \\
x_2x_3 & x_1x_3 & x_1x_2
\end{array}
\right],$$
has $\det M(x) \geq 0$ for all $x \in \mathcal{D}_3,$ but it is not totally
positive since some of its $2\times 2$ subdeterminants are negative
at some points of $\mathcal{D}_3.$ Also, it can be checked that no permutation
of the rows will make $M(x)$ totally positive on $\mathcal{D}_3$.
However, $M(x)$ does have the property that each subdeterminant
has constant sign on $\mathcal{D}_3,$ and $M(x)$ has rank 3 on the interior
of $\mathcal{D}_3.$
Does the conclusion of Conjecture 16 hold ?
We have at least the following  result on the level sets
of $\Phi_1$ on $\mathcal{D}_3.$
\begin{prop}
Suppose $x, y \in \mathcal{D}_3$ satisfy $\Phi_1(x)=\Phi_1(y)=:c,
\ \Phi_2(x) \leq \Phi_2(y), \ \Phi_3(x) \leq \Phi_3(y).$
Then there is a $C^1$ path $\gamma : [0,1] \to \{\Phi_1 = c\}\cap \mathcal{D}_3$
such that {\rm (a)} and {\rm(b)} of Conjecture 16 hold, i.e.
$ \gamma(0) =x, \ \gamma(1) = y,$ and
$ \ 0\leq s \leq t \leq 1 \Rightarrow \Phi_i(\gamma(s)) \leq \Phi_i(\gamma(t)),
i=2,3.$
\end{prop}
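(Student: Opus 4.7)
I would reduce to a two-dimensional problem by parametrizing the level set $\Sigma := \{\Phi_1 = c\} \cap \mathcal{D}_3$ (a 2-simplex with vertices $(c,0,0)$, $(c/2, c/2, 0)$, $(c/3,c/3,c/3)$) by $(x_1, v)$, where $v := \Phi_3 = x_1 x_2 x_3$. For $x_1 \in (0, c]$ the remaining coordinates $x_2 \geq x_3$ are the roots of $t^2 - (c - x_1) t + v/x_1 = 0$, and the conditions $x_1 \geq x_2 \geq x_3 \geq 0$ translate into $L(x_1) \leq v \leq U(x_1)$, where
\[
L(x_1) := \max\bigl(0,\ x_1^2(c - 2x_1)\bigr), \qquad U(x_1) := x_1(c - x_1)^2/4.
\]
Call the resulting planar region $\mathcal{T}$. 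A direct derivative computation shows that both $L$ and $U$ are \emph{nonincreasing} on $[c/3, c]$ (each is maximized at $x_1 = c/3$). Since $\Phi_2 = c - x_1$, the monotonicity required of $\gamma$ translates to: in the path $(x_1(t), v(t))$ in $\mathcal{T}$, $x_1$ must be nonincreasing and $v$ nondecreasing.

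Next I would construct a piecewise-smooth monotone path from $P_0 := (x_1^{(0)}, v^{(0)})$ to $P_1 := (x_1^{(1)}, v^{(1)})$ in $\mathcal{T}$, treating two cases. In \textbf{Case A}, when $v^{(1)} \leq U(x_1^{(0)})$, take the ``L-path'': straight up from $P_0$ to $(x_1^{(0)}, v^{(1)})$, then straight left to $P_1$. The vertical segment stays in $\mathcal{T}$ because its $v$-values lie in $[L(x_1^{(0)}), U(x_1^{(0)})]$; the horizontal segment stays in $\mathcal{T}$ because $U$ nonincreasing gives $v^{(1)} \leq U(x_1^{(0)}) \leq U(x_1)$ throughout, while $L$ nonincreasing together with $P_1 \in \mathcal{T}$ gives $v^{(1)} \geq L(x_1^{(1)}) \geq L(x_1)$ throughout. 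In \textbf{Case B}, when $v^{(1)} > U(x_1^{(0)})$, the simple L-path leaves $\mathcal{T}$, so I use instead a three-segment path: vertically from $P_0$ up to $(x_1^{(0)}, U(x_1^{(0)}))$; then along the upper-boundary curve $v = U(x_1)$ leftward to $(x_1', v^{(1)})$, where $x_1' \in (x_1^{(1)}, x_1^{(0)})$ is the unique solution of $U(x_1') = v^{(1)}$ (existing by the strict decrease of $U$ and the bounds $U(x_1^{(0)}) < v^{(1)} \leq U(x_1^{(1)})$); then horizontally to $P_1$. Each of these three segments is monotone in $x_1$ and $v$ in the required directions, and the same monotonicity of $L$ and $U$ shows that each stays in $\mathcal{T}$.

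The remaining step is to round the corners and obtain a $C^1$ lift $\gamma$ into $\mathcal{D}_3$. Rounding corners by short monotone arcs in the interior of $\mathcal{T}$ is routine, but one subtlety arises in Case B because the middle segment lies on the upper boundary $\{v = U(x_1)\} = \{x_2 = x_3\}$, where the lift $(x_1, v) \mapsto (x_1, x_2, x_3)$ involves $\sqrt{(c - x_1)^2 - 4v/x_1}$ and has blowing-up derivatives. My plan to handle this is to first perturb the middle segment slightly inward: replace $v = U(x_1)$ on its interior by $v = U(x_1) - \delta\,\rho(x_1)$, where $\rho$ is a smooth nonnegative bump supported in the open interval $(x_1', x_1^{(0)})$ and $\delta > 0$ is small. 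For $\delta$ small the perturbed curve is still monotone in both coordinates and still joins the two endpoints of the middle segment, and it now lies in the interior of $\mathcal{T}$, where the lift to $\R^3$ is $C^\infty$. A standard bump-function smoothing of all corners then yields the desired $C^1$ path. The main obstacle, accordingly, is this $C^1$ issue on $\{x_2 = x_3\}$; the rest is a direct case analysis exploiting the crucial fact that both boundary functions $L$ and $U$ are nonincreasing on $[c/3, c]$.
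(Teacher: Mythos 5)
Your construction is, at its core, the same as the one the paper sketches: paths built from pieces of the $\Phi_2$-level curves (vertical segments $x_1={\rm const}$), the $\Phi_3$-level curves (horizontal segments $v={\rm const}$), and the boundary of the triangle $T$ (the curve $v=U(x_1)$, i.e.\ $\{x_2=x_3\}$). The paper stops at "verify by inspection," so your contribution is welcome: the $(x_1,v)$ coordinates, the explicit formulas for $L$ and $U$, the observation that both are nonincreasing on $[c/3,c]$ (whose proof is a short derivative computation: $U'=\tfrac{1}{4}(c-x_1)(c-3x_1)\leq 0$ and, where $L>0$, $L'=2x_1(c-3x_1)\leq 0$), and the two-case analysis all make the "inspection" concrete and checkable. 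You also correctly identify that the only place the lift $(x_1,v)\mapsto(x_1,x_2,x_3)$ loses smoothness is the upper boundary $\{x_2=x_3\}$: on the other parts of $\partial\mathcal{T}$ the discriminant $(c-x_1)^2-4v/x_1$ stays bounded away from zero, so the square root causes no trouble.

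The one gap is in your handling of the upper-boundary issue. Your assertion that after subtracting $\delta\rho(x_1)$ the middle segment "now lies in the interior of $\mathcal{T}$" is not quite right: $\rho$ is supported in the \emph{open} interval $(x_1',x_1^{(0)})$, so the perturbed curve still meets the boundary exactly at its two endpoints $(x_1^{(0)},U(x_1^{(0)}))$ and $(x_1',v^{(1)})$. That alone is harmless for the middle segment itself (a compactly supported smooth bump vanishes to infinite order at the endpoints, so $\sqrt{\delta\rho(x_1)/x_1}$ is $C^\infty$ there), but the \emph{junctions} with the vertical and horizontal segments still approach the boundary, and there the discriminant vanishes only to first order along the incoming segment, which yields a genuine $\sqrt{\,\cdot\,}$-type singularity in the lift of $x_2-x_3$. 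So the perturbation by itself does not avert the problem; you must also round (or reparametrize through) those two junctions so that the path enters $\{x_2=x_3\}$ tangentially — this is exactly what the paper's parenthetical hint ("$C^1$ allows $\gamma$ to have intervals of velocity zero") is pointing at. The same care is needed when $P_0$ or $P_1$ itself lies on $\{x_2=x_3\}$, a case your write-up does not separate out: there the path must terminate on the boundary, and the approach must again be made tangential. None of this changes the strategy — it is the same strategy as the paper's, carried out in more detail — but as written the smoothing step is not yet watertight.
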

\begin{proof} (Outline). It is not difficult to sketch all level curves
$\Phi_2 = c_2 $ and $\Phi_3 = c_3$ on the triangle
$T:=\{\Phi_1 = c\}\cap \mathcal{D}_3,$ and then verify by inspection that
the required $\gamma$ always exists. In fact $\gamma$ can always
be made up of pieces of these level curves or the boundary of $T.$
(Note that ``$C^1$" allows $\gamma$ to have intervals of velocity zero.)
\end{proof}
\begin{corollary} If $x,y \in \mathcal{D}_3$
satisfy $\Phi_1(x)=\Phi_1(y)=:c$ then the three relations (a) $x \prec_L y$,
 (b) $x \prec_F y$, and (c) $\Phi_i(x) \leq \Phi_i(y) , i=2,3$ are
 all equivalent.
\end{corollary}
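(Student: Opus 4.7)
My plan is to prove Corollary 18 via the cycle $(b) \Rightarrow (a) \Rightarrow (c) \Rightarrow (b)$. The first implication is just Theorem 9. For $(a) \Rightarrow (c)$, the constraint $\Phi_1(x) = \Phi_1(y) = c$ is part of the hypothesis; the inequality $\Phi_3(x) \le \Phi_3(y)$ is the $p = 0$ case of (\ref{p1}) (since $\|x\|_0^3 = x_1 x_2 x_3 = \Phi_3(x)$); and $\Phi_2(x) \le \Phi_2(y)$ follows from the $p = \infty$ case of (\ref{p2}), because the equal-sum hypothesis and $x,y\in\mathcal{D}_3$ give $x_1 = \|x\|_\infty \ge \|y\|_\infty = y_1$, whence $\Phi_2(x) = c - x_1 \le c - y_1 = \Phi_2(y)$.

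The substantive step is $(c) \Rightarrow (b)$. I would apply Lemma 10 with $\Phi := (\Phi_1, \Phi_2, \Phi_3)$, with the path $\gamma \subset \mathcal{D}_3$ furnished by Proposition 17 (along which $\Phi_1$ is constant and $\Phi_2, \Phi_3$ are nondecreasing), and with $F := F_{k,r}$ for arbitrary $k, r \ge 1$. The only nontrivial hypothesis to verify is condition (c) of Lemma 10: at each $p = (p_1, p_2, p_3) \in \gamma([0,1])$ one must have $\nabla F_{k,r}(p) \in \overline{C_\Phi(p)}$, the cone now being generated by just three vectors, $\nabla\Phi_1(p) = (1,1,1)$, $\nabla\Phi_2(p) = (0,1,1)$, and $\nabla\Phi_3(p) = (p_2 p_3, p_1 p_3, p_1 p_2)$.

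For interior points $p_1 > p_2 > p_3 > 0$, this is a $3 \times 3$ linear system which I would solve explicitly: writing $B := \nabla F_{k,r}(p)$, I get
\[
c_3 = \frac{B_3 - B_2}{p_1(p_2 - p_3)}, \qquad
c_2 = (B_2 - B_1) - c_3\, p_3(p_1 - p_2), \qquad
c_1 = B_1 - c_3\, p_2 p_3.
\]
Nonnegativity of $c_3$ is the Schur-concavity inequality (\ref{tp2a}); that of $c_2$ coincides, after clearing denominators by $p_1 p_2 p_3$, with the $3 \times 3$ total-positivity condition (\ref{tp3}) at $x = p$, $(i,j,k)=(1,2,3)$; and that of $c_1$ follows from the algebraic identity
\[
c_1 \cdot p_1(p_2 - p_3) \;=\; (B_1 p_1 - B_3 p_3)(p_2 - p_3) \;+\; p_3\,(B_2 p_2 - B_3 p_3),
\]
whose two summands are nonnegative by applying the $2 \times 2$ inequality (\ref{tp2b}) of Lemma 13 to the pairs $(i,j) = (1,3)$ and $(2,3)$ respectively. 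All three coefficients are thus $\ge 0$ by Lemma 13, giving $\nabla F_{k,r}(p) \in \overline{C_\Phi(p)}$. The degenerate configurations $p_2 = p_3$ or $p_3 = 0$ can be handled directly: in each case the generator matrix drops rank in a controlled way, and symmetry of $F_{k,r}$ forces the corresponding components of $B$ to coincide, so one may take $c_3 = 0$ (when $p_2 = p_3$) and reduce the system by one dimension, leaving only Schur-concavity to check. The main obstacle I anticipate is simply the bookkeeping for the $c_1$ identity above; once that is verified, the rest is direct appeal to Lemma 13 and Proposition 17.
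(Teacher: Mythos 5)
Your cycle $(b)\Rightarrow(a)\Rightarrow(c)\Rightarrow(b)$ is sound and follows the same overall strategy as the paper (Lemma 10 applied along the path from Proposition 17). The step $(a)\Rightarrow(c)$ matches the paper essentially verbatim (the paper extracts $x_1x_2x_3\leq y_1y_2y_3$ from Theorem 7(b) by letting $t\to\infty$, whereas you invoke the $p=0$ case of (\ref{p1}); both are fine). The one genuine difference is in verifying condition (c) of Lemma 10 at a point $p=\gamma(t)$. The paper argues by a two-step cone inclusion: Corollary 14 already places $\nabla F_{k,r}(p)$ in $\overline{C_\Psi(p)}$, which by the discussion around Lemma 12 is the cone generated by the three vectors $(\tfrac{1}{p_1},\tfrac{1}{p_1},\tfrac{1}{p_1})$, $(\tfrac{1}{p_1},\tfrac{1}{p_2},\tfrac{1}{p_2})$, $(\tfrac{1}{p_1},\tfrac{1}{p_2},\tfrac{1}{p_3})$; one then simply observes each of these is a nonnegative combination of $(1,1,1)$, $(0,1,1)$, $(\tfrac{1}{p_1},\tfrac{1}{p_2},\tfrac{1}{p_3})$, i.e.\ of $\nabla\Phi_1$, $\nabla\Phi_2$, $\nabla\Phi_3$ (the last up to the positive scalar $p_1p_2p_3$). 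You instead solve the $3\times3$ system for $(c_1,c_2,c_3)$ directly and check nonnegativity against the determinant inequalities of Lemma 13. Your formulas and the identity $c_1 p_1(p_2-p_3)=(B_1p_1-B_3p_3)(p_2-p_3)+p_3(B_2p_2-B_3p_3)$ are correct, and the appeals to (\ref{tp2a}), (\ref{tp2b}), (\ref{tp3}) are all legitimate. The paper's route is shorter because it leverages Corollary 14 rather than re-deriving the cone membership from scratch; your route is self-contained but requires you to rediscover, via Cramer-type manipulations, exactly the inequalities that Corollary 14 has already packaged. Your treatment of the degenerate configurations ($p_2=p_3$, $p_3=0$, etc.) is sketchier than the interior case, but this matches the "outline" register of the paper's own proof, which simply restricts to $x_1x_2x_3\neq0$ "for simplicity."
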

\begin{proof} (Outline). For simplicity, let $x_1x_2x_3 \neq 0.$
We first show that $(c) \Rightarrow (b).$
Assume $(c).$
Let $\gamma$
be the path given by Proposition 17 and let $z = \gamma(t)$ be
a point on the path.
(In particular $z_1\geq z_2\geq z_3 >0.$) We claim that $\nabla F_{k,r}(z) \in
\{c_1 \nabla \Phi_1(z) + c_2 \nabla \Phi_2(z) +c_3 \nabla \Phi_3(z)
\ | \ c_1,c_2,c_3 \geq 0 \}$  for all $F_{k,r}.$
By Corollary 14 and the proof of Lemma 12 we know that
$\nabla F_{k,r}(z)$ is in the positive cone spanned
by the three vectors $(\frac{1}{z_1},\frac{1}{z_1},\frac{1}{z_1}),
(\frac{1}{z_1},\frac{1}{z_2},\frac{1}{z_2}),
(\frac{1}{z_1},\frac{1}{z_2},\frac{1}{z_3}).$ But each of these
is clearly in the positive cone spanned by $(1,1,1), (0,1,1),
(\frac{1}{z_1},\frac{1}{z_2},\frac{1}{z_3}),$
in other words by $\nabla \Phi_1(z), \nabla \Phi_2(z),
\nabla \Phi_3(z),$ thus proving the claim. Now Lemma 10 implies
that $F_{k,r}(x) \leq F_{k,r}(y),$ that is $x \prec_F y.$
(A similar argument shows that $(c) \Rightarrow (a).$)
Next, it remains to show that $(a) \Rightarrow (c)$,
since we already know that $(b) \Rightarrow (a)$ by Theorem 9.
Assume $(a).$ Condition (b) of Theorem 7 clearly implies $x_1x_2x_3
\leq y_1y_2y_3.$ Since $x_1+x_2+x_3 = y_1+y_2+y_3,$ we may
apply the case $p=\infty$ of (\ref{p2}) to see that $x_1 \geq y_1,$
whence $x_2+x_3 \leq y_2+y_3.$
\end{proof}

We now give a characterization of the inequalities
$F_{k,r}(x) \leq F_{k,r}(y)$ when $r$ is held fixed.
\begin{prop}
Fix an integer $r \geq 1.$ If $x,y \in \R_{+}^3$
satisfy $x_1+x_2+x_3=y_1+y_2+y_3$ then the following
two conditions are equivalent:

{\rm(}i{\rm)} $F_{k,r}(x) \leq F_{k,r}(y), \ k = 1,2,\dots $.

{\rm(}ii{\rm)} $ x_1x_2x_3 \leq y_1y_2y_3 \ $ and
$ -(x_1^{r+1}+x_2^{r+1}+x_3^{r+1})
 \leq  -(y_1^{r+1}+y_2^{r+1}+y_3^{r+1}). $
\end{prop}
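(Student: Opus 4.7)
The forward direction ${\rm(}i{\rm)} \Rightarrow {\rm(}ii{\rm)}$ is immediate: taking $k = 3r$ in ${\rm(}i{\rm)}$ and using $F_{3r,r}(x) = (x_1 x_2 x_3)^r/(r!)^3$ gives the product inequality, while taking $k = r+1$ and invoking the identity $F_{r+1,r} = (E_1^{r+1} - \sum_i x_i^{r+1})/(r+1)!$ from Remark {\rm(}1.2{\rm)}, together with $E_1(x) = E_1(y)$, yields the power-sum inequality.

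For the converse ${\rm(}ii{\rm)} \Rightarrow {\rm(}i{\rm)}$, the plan is to imitate the proof of Corollary 18, replacing the triple $(E_1,\, x_2+x_3,\, E_3)$ by
$$
\Phi_1 := E_1,\qquad \Phi_2 := E_3,\qquad \Phi_3 := -\sum_{i=1}^{3} x_i^{r+1}.
$$
Without loss of generality $x,y \in \mathcal{D}_3$ by symmetry of the $F_{k,r}$. For $k \leq r$ we have $F_{k,r} = E_1^k/k!$ and for $k > 3r$ we have $F_{k,r} = 0$, so only the range $r+1 \leq k \leq 3r$ is at stake. The first step is to construct, as in the outline of Proposition 17, a $C^1$ path $\gamma : [0,1] \to \mathcal{D}_3 \cap \{E_1 = c\}$ from $x$ to $y$ along which $\Phi_2$ and $\Phi_3$ are both nondecreasing. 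This is verified by inspection of the level curves of $E_3$ and $\sum_i x_i^{r+1}$ on the triangle $T := \{E_1 = c\} \cap \mathcal{D}_3$: both functions are smooth and share the centroid of $T$ as a common extremum (a maximum of $E_3$ and a minimum of $\sum_i x_i^{r+1}$), and $\gamma$ may be assembled by concatenating arcs of these level curves together with, where necessary, arcs of $\partial T$.

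The second step applies Lemma 10 to $F := F_{k,r}$ with the family $\{\Phi_1, -\Phi_1, \Phi_2, \Phi_3\}$ (noting that both $\Phi_1$ and $-\Phi_1$ are constant, hence trivially nondecreasing, along $\gamma$). Its hypothesis {\rm(}c{\rm)} then reduces to showing that at each $z \in \gamma([0,1])$ with $z_1 > z_2 > z_3 > 0$ one has
$$
\nabla F_{k,r}(z) = c_1 \nabla \Phi_1(z) + c_2 \nabla \Phi_2(z) + c_3 \nabla \Phi_3(z),\quad c_1 \in \R,\ c_2, c_3 \geq 0
$$
(the coefficient $c_1$ being free thanks to the enlarged family). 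Eliminating $c_1$ by row subtraction gives a $2 \times 2$ system for $(c_2, c_3)$, and Cramer's rule reveals that $c_3 \geq 0$ is exactly condition (\ref{tp3}) of Lemma 13 applied to $F_{k,r}$ (already proven), while $c_2 \geq 0$ amounts to the new determinantal inequality
$$
\det\begin{pmatrix} 1 & 1 & 1 \\ \partial F_{k,r}/\partial x_1 & \partial F_{k,r}/\partial x_2 & \partial F_{k,r}/\partial x_3 \\ x_1^r & x_2^r & x_3^r \end{pmatrix} \geq 0 \qquad (x_1 \geq x_2 \geq x_3 > 0).
$$

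The main obstacle is to establish this new total-positivity condition for each $F_{k,r}$. The plan is to follow the generating-function strategy from the proof of Lemma 13: summing over $k$ with weights $t^k$ replaces the middle row by $\partial f_r/\partial x_j = t\, P_{r-1}(x_j t) \prod_{l \neq j} P_r(x_l t)$, where $f_r(x,t) = \prod_i P_r(x_i t)$. After row reductions using the identity $P_r = P_{r-1} + (xt)^r/r!$, the claim becomes equivalent to the concavity of the function $G(u) := u/P_r(u^{1/r} t)$ in $u \geq 0$, for each fixed $t \geq 0$ and integer $r \geq 1$. Concavity of $G$ is then verified by direct computation of $G''$, which after simplification reduces to the polynomial inequality
$$
(r+1)\, P_r(s)\, P_{r-1}(s) \;\geq\; s\bigl(2 P_{r-1}(s)^2 - P_r(s) P_{r-2}(s)\bigr),\qquad s \geq 0.
$$
The latter can be checked by expanding both sides in powers of $s$ and invoking the classical log-concavity $P_{r-1}(s)^2 \geq P_r(s) P_{r-2}(s)$ of the truncated-exponential sequence (which in particular makes the right-hand side nonnegative, and the leading-order asymptotics of both sides agree as $s \to \infty$). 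Once the concavity of $G$ is in hand, Lemma 10 delivers $F_{k,r}(x) \leq F_{k,r}(y)$ for each $r+1 \leq k \leq 3r$, completing the proof of ${\rm(}i{\rm)}$.
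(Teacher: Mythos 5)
The forward direction, the path-construction step, and the identification of the two determinantal conditions all agree with the paper's (outline of a) proof. Your use of the enlarged family $\{\Phi_1,-\Phi_1,\Phi_2,\Phi_3\}$ is a legitimate and slightly cleaner variant: since $\Phi_1=E_1$ is constant along $\gamma$, you may freely include $-\Phi_1$, which frees the coefficient $c_1$ to be of either sign; the paper instead works with $\{\Phi_1,\Phi_2,\Phi_3\}$ and verifies $c_1\geq 0$ as a third Cramer determinant. Your reduction of $c_3\geq 0$ to condition (\ref{tp3}) of Lemma 13 and of $c_2\geq 0$ to the new determinant with bottom row $(x_1^r,x_2^r,x_3^r)$ is also correct.

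The gap is in your proof of the new determinantal inequality. What Lemma 10 requires is that $\nabla F_{k,r}(z)$ lie in the cone \emph{for each fixed $k$}, i.e.\ that the coefficient of $t^k$ in
$$
D(x,t)\ :=\ \det\begin{pmatrix} 1 & 1 & 1 \\ \partial f_r/\partial x_1 & \partial f_r/\partial x_2 & \partial f_r/\partial x_3 \\ x_1^r & x_2^r & x_3^r \end{pmatrix}
$$
be nonnegative for every $k$. Your reduction, after dividing the middle row by $f_r$, subtracting $t$ times the top row, and clearing the resulting negative factor $-t^{r+1}/r!$, shows precisely that $D(x,t)\geq 0$ pointwise in $t$ is equivalent to concavity of $G(u)=u/P_r(u^{1/r}t)$. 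But pointwise nonnegativity of a polynomial in $t$ with real coefficients does not imply nonnegativity of each coefficient, so even if the concavity of $G$ is established, it does not yield the hypothesis (c) of Lemma 10 for the individual $F_{k,r}$, and the final sentence ``Lemma 10 delivers $F_{k,r}(x)\leq F_{k,r}(y)$ for each $k$'' does not follow. (Applying Lemma 10 instead to $F:=f_r(\cdot,t_0)$ for each fixed $t_0$ would only give $f_r(x,t_0)\leq f_r(y,t_0)$, which again does not order the coefficients.) The paper closes this gap by the same device as in the proof of Lemma 13: expanding $D(x,t)$ as a positive linear combination of generalized Vandermonde determinants $\det[\overrightarrow{V_{r+b}};\overrightarrow{V_r};\overrightarrow{V_a}]$ with $r+b>r>a\geq 0$, each multiplied by a nonnegative monomial in $t$, which yields nonnegativity of every coefficient of $t^k$ at once. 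Replacing your analytic concavity argument by this Vandermonde expansion (which is straightforward given your row reduction, since after subtracting row~1 from a $P_r$-row one obtains rows $\overrightarrow{V_r}$, $\sum_a t^a\overrightarrow{V_a}/a!$, $\sum_b t^b\overrightarrow{V_{r+b}}/b!$ up to scalars) would repair the proof.
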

\begin{proof} (Outline).
Note that $(ii)$ may be stated as $
\ F_{3r,r}(x) \leq F_{3r,r}(y)\ , \  F_{r+1,r}(x) \leq F_{r+1,r}(y),$
so that clearly  $(i) \Rightarrow (ii).$
For $(ii) \Rightarrow (i)$, consider the triangle $T:=
\{\Phi_1 = constant\}\cap \mathcal{D}_3$ containing $x$ and $y$
as in Proposition 17.
It can be shown that there is a path $\gamma$ in $T$ from $x$ to $y$
along which the functions $ x_1x_2x_3$ and $ -(x_1^{r+1}+x_2^{r+1}+x_3^{r+1})$
are both nondecreasing. Next, we claim that for any $z \in \mathcal{D}_3$,
$\nabla F_{k,r}(z)$ is in the positive
cone spanned by $\nabla (z_1+z_2+z_3), \nabla (-z_1^{r+1}-z_2^{r+1}-z_3^{r+1}),
\nabla (z_1z_2z_3).$ By Cramer's rule, this follows from the
positivity of the determinant
$$
\left|
\begin{array}{ccc}
1 & 1 & 1 \\
-z_1^r & -z_2^r & -z_3^r  \\
  \frac{1}{z_1} & \frac{1}{z_2} & \frac{1}{z_3}
\end{array}
\right| $$
and those obtained by replacing a row by $\nabla F_{k,r}(z)$.
This in turn reduces to the positivity of generalized
Vandermonde determinants by computations similar to those seen in the proof
of Lemma 13. Then $(i)$ follows by Lemma 10.
\end{proof}

\noindent {\Large {\bf 8. The relation $x \prec_L y$ and
``tensor-product-assisted majorization".} }

Let $x,y \in \R_{+}^n$ with $\sum x_i = \sum y_i$.
The ``trumping relation" $x \succ_T y$ \cite{JP}, \cite{N}, \cite{DK}
is defined by the condition
that there exist $d \in \N$ and $0 \neq z \in \R_{+}^d$ (depending
on $x$ and $y$) such that
$x \otimes z \succ y \otimes z$, where $x \otimes z \in \R_{+}^{nd}$ denotes the vector
with entries $(x \otimes z)_{i,j}:= x_iz_j\ ,
1\leq i \leq n; 1\leq j \leq d$, and $\succ$ is the usual majorization relation (see \S1).
(The vector $z$ is called the ``catalyst" which ``assists" the majorization.)
If $p$ is a real number, it is easily seen that $x \succ_T y$
implies $||x||_p \leq ||y||_p$ for $-\infty < p \leq 1$, and
$||x||_p \geq ||y||_p$ for $1 \leq p \leq \infty $ (by factoring out
the $p$-norm of the catalyst). Thus it seems natural to look for
connections between the relations $x \succ_T y$ and $x \prec_L y$,
in view of (\ref{p1}) and (\ref{p2}).
We may first of all remark that
$x \prec_L y, \ ||x||_1 = ||y||_1$ does not imply
$x \succ_T y$, by the example
$x =(15,2,2), \ y=(9,9,1)$ mentioned in \S2.3. Clearly $x \succ_T y$ fails, since
$x \succ_T y$ would imply that $ \min x \leq \min y .$
 Another connection is the following.
It was noted in \S2.1 that, when $\sum x_i = \sum y_i$, the
relation $x \prec_L y$ may be defined by the condition
$$\sum_{i=1}^n \int_0^{x_i} \ \varphi(t) \frac{dt}{t}
\geq \sum_{i=1}^n \int_0^{y_i} \ \varphi(t) \frac{dt}{t}$$
for all nondecreasing convex $\varphi : [0,\infty)\to [0,\infty)$.
This may be re-written as
$$\sum_{i=1}^n \int_0^{1} \ \varphi(x_it) \frac{dt}{t}
\geq \sum_{i=1}^n \int_0^{1} \ \varphi(y_it) \frac{dt}{t} \ .$$
The latter may be interpreted as
$x \otimes z \succ y \otimes z$ where $z$ is the function
$z(t)=t$ on the measure space $[0,1]$ equipped with the
measure $\frac{dt}{t}$, and where by definition $x \otimes z$
is assumed to live on the product measure space $\{1,\dots, n\}\times[0,1]$
equipped with the product of counting measure and $\frac{dt}{t}$.
(Also, for convenience, we are here taking the definition of
$f \succ g$ for integrable functions $f,g \geq 0$ on measure spaces to be:
 $\int f = \int g$ and $\int \varphi\circ f \geq
\int \varphi\circ g$ for all nondecreasing convex $\varphi : [0,\infty)\to [0,\infty)$.)
By another change of variable ($t =e^{-u}$), we may instead use the function
$z(u) = e^{-u}$ on $[0,\infty)$ with Lebesgue measure $du$. In conclusion, we note
that Daftuar \cite[Example 4.3.1]{D} has used a discrete version of such an infinite,
exponential catalyst, namely $z= \frac{1}{1-\alpha}(1,\alpha,\alpha^2,\alpha^3,\dots)$
where $0<\alpha<1$, to give an example with $x \otimes z \succ y \otimes z$
but not $x \succ_T y$ ($ x:=(0.5,0.25,0.25),\ y:= (0.4,0.4,0.2),\ \alpha := 2^{-\frac{1}{8}}$).
 By the present discussion, one could say that this example and
the above example concern similar phenomena.



\begin{thebibliography}{MM}

\bibitem[1]{Bell} {Bellman, R.}
{\it Introduction to matrix analysis.}
 McGraw-Hill, New York-Toronto-London, 1960.

\bibitem[2]{BL} {Bergh, J. and L\"ofstr\"om, J.}
{\it Interpolation spaces.
An introduction.} Grund. Mat. Wiss. no. 223,
Springer-Verlag, Berlin-New York, 1976.

\bibitem[3]{B} {Brennan, D.} {\it Convex functions, majorization
properties and the convex conjugate transform.}
M.Sc. Thesis, McGill Univ.,
Montr\'eal, 2005.

\bibitem[4]{D}
{Daftuar, S.} {\it Eigenvalue Inequalities in Quantum Information
Processing. } Ph. D. Thesis, California Institute of Technology,
Pasadena, 2004.

\bibitem[5]{DK}
{Daftuar, S. and Klimesh, M.} {\it Mathematical structure of entanglement
catalysis. } Physical Review A, 64 (2001), no. 4, 042314.


\bibitem[6]{Gar} {Garnett, J. B.}
{\it Bounded analytic functions.}
Pure and Applied Mathematics, 96. Academic Press,
New York-London, 1981.

\bibitem[7]{Ger}
{Gerver, M. L. and Kudryavtseva, E. A.}
{\it A theorem on precedence relations generated by completely positive kernels.}
(Russian. Russian summary) Mat. Sb. 186 (1995), no. 9, 19-44; translation in
Sb. Math. 186 (1995), no. 9, 1241-1269.


\bibitem[8]{GK}
{Gohberg, I. C. and Kre\u\i n, M. G.}
{\it Introduction to the theory of linear nonselfadjoint operators.}
Translated from the Russian by A. Feinstein.
Translations of Mathematical Monographs, Vol. 18 American Mathematical Society,
Providence, R.I., 1969.

\bibitem[9]{GR}
{Gradshteyn, I. S. and Ryzhik, I. M.}
{\it Table of integrals, series, and products.}
Corrected and enlarged edition edited by Alan Jeffrey.
Translated from the Russian.
Academic Press, New York-London-Toronto, 1980.


\bibitem[10]{HL} {Hardy, G.H. and Littlewood, J.E.}
{\it Notes on the theory
of series (VIII): An inequality.}
J. London Math. Soc. 3 (1928), 105-110.

\bibitem[11]{HLP} {Hardy, G. H., Littlewood, J. E. and
P\'olya, G.} {\it Some simple inequalities satisfied by
convex functions.} {Messenger Math.} 58 (1929), 145-152.

\bibitem[12]{HLP2} {Hardy, G. H., Littlewood, J. E. and
P\'olya, G.} {\it Inequalities.} 2nd ed.,
Cambridge Univ. Press, 2001.

\bibitem[13]{JP}
{Jonathan, D. and Plenio, M. B.} {\it Entanglement-assisted local manipulation of pure
quantum states.} Phys. Rev. Lett. 83 (1999), no. 17, 3566-3569.

\bibitem[14]{K1} {Kleme\v{s}, I.} {\it Finite Toeplitz matrices
and sharp Littlewood conjectures.} (English)
Algebra i Analiz 13 (2001), no. 1, 39-59;
translation in St. Petersburg Math. J. 13 (2002), no. 1, 27-40.

\bibitem[15]{K2} {Kleme\v{s}, I.} {\it On two families of
Schur-concave symmetric polynomials.} Preliminary report,
Session on function spaces in harmonic analysis and PDEs,
Joint meeting of the A.M.S. and C.M.S., Montr\'eal (May 2002), Abstract no.
976-41-38.

\bibitem[16]{K3} {Kleme\v{s}, I.} {\it Alexandrov's inequality and
conjectures on some Toeplitz matrices.}  Linear Algebra Appl.
422 (2007), no. 1, 164-185.



\bibitem[17]{Ko} {Konyagin, S. V.} {\it On the Littlewood
problem.} (Russian)
Izv. Akad. Nauk SSSR Ser. Mat. 45 (1981), no. 2, 243-265, 463.
English translation: Math. USSR-Izv. 45 (1981), no. 2, 205-225 (1982).

\bibitem[18]{Koo} {Koosis, P.}
{\it The logarithmic integral. I.}
Cambridge Studies in Advanced Mathematics, 12.
Cambridge University Press, Cambridge, 1988.

\bibitem[19]{MO} {Marshall, A. W. and Olkin, I.} {\it
Inequalities: theory of
majorization and its applications.}  Academic Press,
New York-London, 1979.

\bibitem[20]{MWW}
{Marshall, A. W., Walkup, D. W. and Wets, R. J.-B.}
{\it Order-preserving functions: Applications to majorization
and order statistics.}
{Pacific J. Math.} 23 (1967), 569-584.

\bibitem[21]{MS} {Matsaev, V. and Sodin, M.}
{\it Entire functions and compact operators with $S_p$-imaginary component.}
Entire functions in modern analysis (Tel-Aviv, 1997), 243-260,
Israel Math. Conf. Proc., 15, Bar-Ilan Univ., Ramat Gan, 2001.

\bibitem[22]{MPS} {McGehee, O. C., Pigno, L. and Smith, B.}
{\it Hardy's inequality and the $L\sp{1}$ norm of
exponential sums.}
{ Ann. of Math.} (2) 113 (1981), no. 3, 613-618.

\bibitem[23]{N} {Nielsen, M. A.} {\it Majorization and its applications to
quantum information theory.} Notes of lectures given at
California Institute of Technology in June 1999, 50 pages,
http://www.qinfo.org/talks/1999/06-maj/maj.pdf


\bibitem[24]{Per} {Pereira, R.} {\it Interpolation of operators
using Peetre's K-method.} M.Sc. Thesis, McGill Univ.,
Montr\'eal, 1998.

\bibitem[25]{P} {Pichorides, S. K.} {\it Norms of exponential sums.}
{ Publications Math\'ematiques d'Orsay},
no. 77--73. D\'ep. Mat. Univ. Paris-Sud, Orsay, 1977.

\bibitem[26]{Sion} {Sion, M.}
{\it On general minimax theorems. }
{ Pacific J. Math.} 8 (1958), 171-176.

\bibitem[27]{Stoer} {Stoer, J. and Witzgall, C.}
{\it Convexity and optimization in finite dimensions. I.}
Grund. Mat. Wiss. no. 163, Springer-Verlag, New York-Berlin, 1970.


\end{thebibliography}
\end{document}